\numberwithin{equation}{section}
\newcommand{\vanish}[1]{}
\newcommand{\IPF}{\mathrm{IPF}}
\newcommand{\IPPF}{\mathrm{IPPF}}
\newcommand{\PF}{\mathrm{PF}}
\newcommand{\PPF}{\mathrm{PPF}}
\newcommand{\wt}{\mathrm{wt}}
\DeclareMathOperator{\Lc}{\mathcal{L}}
\DeclareMathOperator{\N}{\mathbb{N}}
\theoremstyle{definition}
\newtheorem{definition}{Definition}[section]
\newtheorem{proposition}[definition]{Proposition}
\newtheorem{lemma}[definition]{Lemma}
\newtheorem{theorem}[definition]{Theorem}
\newtheorem{example}[definition]{Example}
\theoremstyle{remark}
\newtheorem{remark}{Remark}[section]
\newcommand{\bsy}{\boldsymbol}
\newcommand{\tcr}[1]{\textcolor{red}{#1}}
\newcommand{\tcb}[1]{\textcolor{blue}{#1}}
\title{Primeness of generalized parking functions}
\author[Armon]{Sam Armon}
\author[Beckford]{Joanne Beckford}
\author[Hanson]{Dillon Hanson}
\author[Krawzik]{Naomi Krawzik}
\author[Mandelshtam]{Olya Mandelshtam}
\author[Martinez]{Lucy Martinez}
\author[Yan]{Catherine Yan}
\address[Armon]{Department of Mathematics, University of Southern California, Los Angeles, CA 90007} 
\email{armon@usc.edu}
\address[Beckford]{Department of Mathematics, Bryn Mawr College, Bryn Mawr, PA, 19010} 
\email{jbeckford@brynmawr.edu}
\address[Hanson]{Department of Mathematics, Jacksonville University, Jacksonville, FL 32211}
\email{dhanson3@ju.edu}
\address[Krawzik]{Department of Mathematics and Statistics, Sam Houston State University, Huntsville, TX 77340} 
\email{krawzik@shsu.edu} 
\address[Mandelshtam]{Department of Combinatorics and Optimization, University of Waterloo, Waterloo, ON, Canada}
\email{omandels@uwaterloo.ca} 
\address[Martinez]{Department of Mathematics, Rutgers University, Piscataway, NJ 08854}
\email{lm1154@scarletmail.rutgers.edu}
\address[Yan]{Department of Mathematics, Texas A\&M University, College Station, TX 77843}
\email{huafei-yan@tamu.edu}
\date{June 2024} 
\begin{document}

\begin{abstract}
Classical parking functions are a generalization of permutations  that appear in many combinatorial structures. Prime parking functions are indecomposable components  such that any classical parking function can be uniquely described as a direct sum of prime ones. 
In this article, we extend the notion of primeness to three generalizations of classical parking functions: vector parking functions, $(p,q)$-parking functions, and two-dimensional vector parking functions. We study their enumeration by obtaining explicit formulas for the number of prime vector parking functions when the vector is an arithmetic progression, prime $(p,q)$-parking functions, and prime two-dimensional vector parking functions when the weight matrix is an affine transformation of the coordinates.
\end{abstract}

\maketitle 

\section{Introduction}

Let $\mathbb{N}=\{0,1,2,\ldots\}$ and set $\mathbb{N}_n=\{0,1,\ldots, n-1\}$ for $n\geq 1$. A \textit{classical parking function,} or simply a \textit{parking function}, of length $n$ is a sequence $\bsy{a} = (a_0,a_1, \ldots, a_{n-1})\in (\mathbb{N}_n)^n$  
 whose weakly increasing rearrangement, denoted $(a_{(0)}, a_{(1)},\ldots,a_{(n-1)})$, satisfies $a_{(i)} \leq i$ for all $i \in \mathbb{N}_n$. We call this weakly increasing sequence the set of \emph{order statistics} of $\bsy{a}$, where each $a_{(i)}$ is the $i$-th order statistic of $\bsy{a}$. 

Parking functions can be described by a deterministic parking process, as follows. Consider a parking lot with $n$ parking spots on a one-way street labeled by $\mathbb{N}_n$. A queue of $n$ cars enters the lot one by one, with car $i$ having a preferred spot $a_i$, which we call its \emph{parking preference}. For each $i\in\mathbb{N}_n$, car $i$ drives to its preferred spot $a_i$ and attempts to park. If the spot is not available, the car continues on and parks in the next available spot, if one exists. If there is no available spot, the car will exit the lot. We call this the \emph{parking rule}, and the list of preferences {$\bsy{a}=(a_0,a_1, \ldots, a_{n-1})$} is called a \emph{parking function} if all cars are able to park in the lot under the parking rule. It is well-known that parking functions {of length $n$} are enumerated by $(n+1)^{n-1}$, see \cite{KonheimWeiss,Pyke}.  

The inequalities satisfied by the order statistics can be restated as
\begin{equation}\label{eq: classical case inequality definition}
    \#\{j:a_j\leq i\}\geq i+1\qquad \mbox{for}\quad {i=0,\dots,n-1,} 
\end{equation} 
that is, $\bsy{a}$ is a parking function if the number of $a_j \leq i$ is weakly bounded below by $i+1$ for each $i$.

It is convenient to represent parking functions as lattice paths with labeled vertical steps. For $n,m\geq 0$, define $\mathcal{L}(n,m)$ to be the set of lattice paths from $(0,0)$ to $(n,m)$ consisting of north-steps (denoted by N) and east-steps (denoted by E). For a sequence $\bsy{a}\in(\mathbb{N}_n)^n$, define the lattice path $L_{\bsy{a}}\in\mathcal{L}(n,n)$ to be the unique path whose $i$-th \emph{vertical} edge has $x$-coordinate $a_{(i)}$ for each $i\in\mathbb{N}_n$. Then $\bsy{a}$ is a parking function if and only if $L_{\bsy{a}}$ is a \textit{Catalan path}, one which is weakly bounded below by the diagonal $y=x$. Then, the vertical edges of $L_{\bsy{a}}$ corresponding to a parking function $\bsy{a}$ are labeled by $\mathbb{N}_n$ so that the $x$-coordinate of the edge with the label $j$ is equal to $a_j$, and labels with the same $x$-coordinate increase from bottom to top.

An \emph{increasing parking function} $\bsy{a}=(a_0,\ldots,a_{n-1})$ satisfies $a_0\leq a_1\leq\cdots\leq a_{n-1}$, and therefore, $a_i=a_{(i)}$. The set of increasing parking functions is in bijection with the set of Catalan paths. See \cref{fig:PF paths} for an example. Consequently, the increasing parking functions of length $n$ {are enumerated} by the Catalan number $C_n=\frac{1}{n+1}\binom{2n}{n}$.

\begin{figure}[H]
\begin{center}
\begin{tikzpicture}[scale=0.8]

\begin{scope}[xshift=10cm]
\draw[step=1cm, thick, dotted] (0,0) grid (4,4);
\node at (-.5,.5) {\tcr{$\mathbf{0}$}}; 
\node at (-.5,1.5) {\tcr{$\mathbf{0}$}}; 
\node at (-.5,2.5) {\tcr{$\mathbf{2}$}};
\node at (-.5,3.5 ) {\tcr{$\mathbf{3}$}}; 

\node at (-1.5,4) {(b)};

\foreach \i in {0,1,2,3,4}
{
\node at (4.5,\i) {\tiny \i};
\node at (\i,4.5) {\tiny \i};
}

\node[red] at (0.2,0.5) {\tiny 1};
\node[red] at (0.2,1.5) {\tiny 3};
\node[red] at (2.2,2.5) {\tiny 0};
\node[red] at (3.2,3.5) {\tiny 2};

\draw[red, very thick] (0,0)--(0,2)--(2,2)--(2,3)--(3,3)--(3,4)--(4,4);

\draw[black] (0,0)--(4,4); 
\end{scope}

\begin{scope}[xshift=0cm]
\draw[step=1cm, thick, dotted] (0,0) grid (4,4);
\node at (-.5,.5) {\tcr{$\mathbf{0}$}}; 
\node at (-.5,1.5) {\tcr{$\mathbf{0}$}}; 
\node at (-.5,2.5) {\tcr{$\mathbf{2}$}};
\node at (-.5,3.5 ) {\tcr{$\mathbf{3}$}};

\node at (-1.5,4) {(a)};

\foreach \i in {0,1,2,3,4}
{
\node at (4.5,\i) {\tiny \i};
\node at (\i,4.5) {\tiny \i};
}

\draw[red, very thick] (0,0)--(0,2)--(2,2)--(2,3)--(3,3)--(3,4)--(4,4);  

\draw[black] (0,0)--(4,4);  
\end{scope}
\end{tikzpicture}
\caption{
(a) The increasing parking function $\bsy{a}=(0,0,2,3)$ and parking function $\bsy{b}=(2,0,3,0)$ correspond to the same Catalan path $L_{\bsy{a}}=L_{\bsy{b}}$. 
(b) The parking function $\bsy{b}$ is represented by adding labels $j\in\mathbb{N}_4$ to the vertical edges of $L_{\bsy{b}}$ such that $j$ has $x$-coordinate $b_{j}$. 
}\label{fig:PF paths}  
\end{center} 
\end{figure}
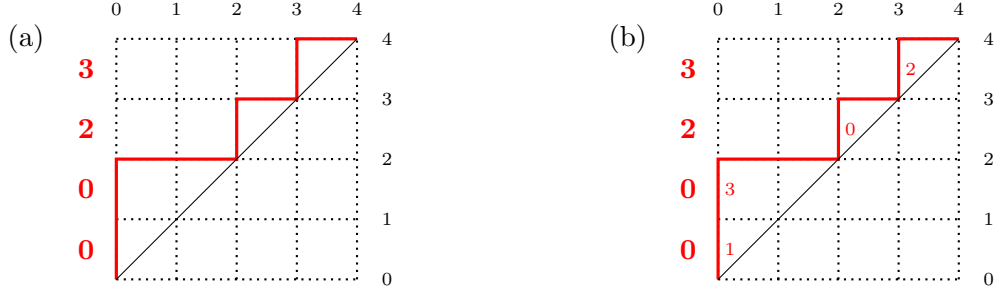

Many mathematical objects can be uniquely decomposed into prime components, a process commonly referred to as prime decomposition. For instance, consider a Catalan path $P\in\mathcal{L}(n,n)$: There is a unique way to split this path into nontrivial components $P_1,P_2,\ldots,P_k$ for some $k\geq 1$ such that each  $P_i$ touches the line $y=x$ only at its endpoints. We represent this split as a direct sum $P=P_1\oplus\cdots\oplus P_k$, which should be interpreted as concatenating the endpoints of the $P_i$'s from left to right. Furthermore, each $P_i$ can be written as $\{{N}\}\oplus P'_i\oplus \{{E}\}$, where $P'_i$ is itself a Catalan path. We call the components $P_i$ the \emph{prime Catalan paths}, so that any Catalan path is either empty or can be written uniquely as a finite sequence of prime Catalan paths. This yields the following decomposition for the class of Catalan paths $\mathcal{C}$ in terms of the class of prime Catalan paths $\mathcal{P}$:
\[
\mathcal{C}=\epsilon \cup \mathcal{P}^\star,\qquad \mathcal{P}=\{{N}\}\oplus\mathcal{C}\oplus\{{E}\},
\]
where $\epsilon$ is the trivial (empty) path of size 0, and the notation $\mathcal{P}^\star$ indicates the concatenation of any finite number of objects in $\mathcal{P}$. 

It is natural, then, to characterize the set of classical prime parking functions as those whose corresponding Catalan paths are prime.  
Indeed, let $\bsy{a_1}, \bsy{a_2}, \dots, \bsy{a_k}$ be a sequence of prime parking functions, where 
for each $i$, $L_{\bsy{a_i}}$ touches the diagonal $y=x$ only at its endpoints.  
Assume the length of $\bsy{a_i}$ is $d_i$.  For a sequence $\bsy{x}=(x_1, x_2,  \dots,x_k)$ and $r\in\mathbb{N}$, define $\bsy{x}+r=(x_1+r, x_2+r, \dots, x_k+r)$, and let 
$\bsy{b_i}=\bsy{a_i}+d_1+d_2+\cdots +d_{i-1}$ for $i=1,\ldots,k$.  
Then any shuffle of the sequences $\bsy{b_1}, \bsy{b_2}, \dots, \bsy{b_k}$ is a parking function. 
Assume $\bsy{a}$ is such a shuffle and let $B_i$ be the set of positions of entries of $\bsy{b_i}$ in $\bsy{a}$. 
Then in the labeled Catalan path representation of $\bsy{a}$, we have that  $L_{\bsy{a}}=L_{\bsy{a_1}}\oplus\cdots\oplus L_{\bsy{a_k}}$, 
and the labels on $L_{\bsy{a}}$ are obtained by replacing the labels of vertical edges in $L_{\bsy{a_i}}$ with the elements in $B_i$, 
where label $j$ is replaced by the $j$-th  order statistic of $B_i$.  

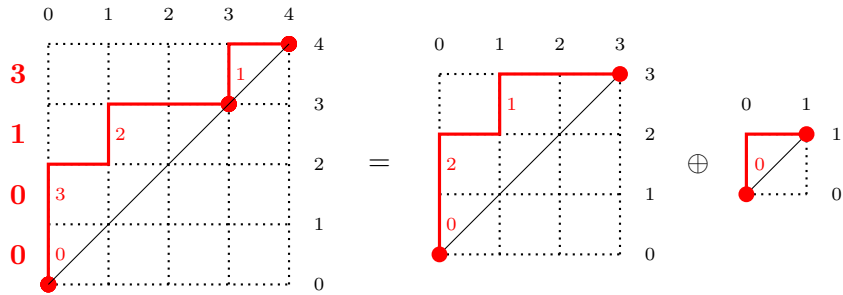
\begin{figure}[H]
\begin{center}
\begin{tikzpicture}[scale=0.8]
\begin{scope}[xshift=0cm]
\draw[step=1cm, thick, dotted] (0,0) grid (4,4);

\node at (-.5,.5) {\tcr{$\mathbf{0}$}}; 
\node at (-.5,1.5) {\tcr{$\mathbf{0}$}}; 
\node at (-.5,2.5) {\tcr{$\mathbf{1}$}};
\node at (-.5,3.5) {\tcr{$\mathbf{3}$}}; 
 
\foreach \i in {0,1,2,3,4}
{
\node at (4.5,\i) {\tiny \i};
\node at (\i,4.5) {\tiny \i};

\foreach \u\i in {0/0,3/3,4/4}
{
\filldraw[red,very thick] (\u,\i) circle (3pt);
}
}

\draw[red, very thick] (0,0)--(0,2)--(1,2)--(1,3)--(3,3)--(3,4)--(4,4);  

\draw[black] (0,0)--(4,4); 

\node[red] at (0.2,0.5) {\tiny 0};
\node[red] at (0.2,1.5) {\tiny 3};
\node[red] at (1.2,2.5) {\tiny 2};
\node[red] at (3.2,3.5) {\tiny 1};


\node at (5.5,2) {$=$};
\draw[step=1cm, thick, dotted,xshift=6.5cm,yshift=.5cm] (0,0) grid (3,3);
\draw[black,xshift=6.5cm,yshift=.5cm] (0,0)--(3,3); 
\draw[red, very thick,xshift=6.5cm,yshift=.5cm] (0,0)--(0,2)--(1,2)--(1,3)--(3,3);
\node[red,xshift=5.2cm,yshift=0.4cm] at (0.2,0.5) {\tiny 0};
\node[red,xshift=5.2cm,yshift=0.4cm] at (0.2,1.5) {\tiny 2};
\node[red,xshift=5.2cm,yshift=0.4cm] at (1.2,2.5) {\tiny 1};

\foreach \i in {0,1,2,3}
{
\node at (10,\i+0.5) {\tiny \i};
\node at (\i+6.5,4) {\tiny \i};
}

\filldraw[red,very thick,xshift=6.5cm,yshift=.5cm] (0,0) circle (3pt);
\filldraw[red,very thick,xshift=6.5cm,yshift=.5cm] (3,3) circle (3pt);

\draw[step=1cm, thick, dotted,xshift=11.6cm,yshift=1.5cm] (0,0) grid (1,1);
\draw[black,xshift=11.6cm,yshift=1.5cm] (0,0)--(1,1); 

\node at (10.8,2) {$\oplus$};
\draw[red, very thick,xshift=11.6cm,yshift=1.5cm] (0,0)--(0,1)--(1,1); 
\node[red,xshift=9.3cm,yshift=1.2cm] at (0.2,0.5) {\tiny 0};

\foreach \i in {0,1}
{
\node at (13.1,\i+1.5) {\tiny \i};
\node at (\i+11.6,3) {\tiny \i};
}

\filldraw[red,very thick,xshift=11.6cm,yshift=1.5cm] (0,0) circle (3pt);
\filldraw[red,very thick,xshift=11.6cm,yshift=1.5cm] (1,1) circle (3pt);

\end{scope}

\end{tikzpicture}
\caption{
The parking function $\bsy{a}=(0,3,1,0)$ can be decomposed as $\bsy{a}=\bsy{a_1}\oplus\bsy{a_2}$ with $\bsy{a_1}=(0,1,0)$ and $\bsy{a_2}=(0)$. Note that $\bsy{b_1}=\bsy{a_1}=(0,1,0)$ and $\bsy{b_2}=\bsy{a_2}+3=(3)$, and $\bsy{a}$ is a shuffle of $\bsy{b_1}$ and $\bsy{b_2}$. Here, $B_1=\{0,2, 3\}$ and $B_2=\{1\}$,  where 
$B_i$ is the set of positions of $\bsy{b_i}$ in $\bsy{a}$, for $i=1, 2$. }\label{fig:prime}
\end{center} 
\end{figure}

Conversely, given any parking function $\bsy{a}$, one can uniquely reconstruct the prime components $\bsy{a_1}, \bsy{a_2}, \dots, \bsy{a_k}$ by taking the prime decomposition of the lattice path $L_{\bsy{a}}=P_1\oplus\cdots\oplus P_k$, and, for $i=1,2,\ldots,k$, associating a parking function $\bsy{a_i}$ to the prime segment $P_i$ with edges relabeled with the label set $\mathbb{N}_{|\bsy{a_i}|}$. 
In this sense, we write $\bsy{a}$ as a direct (shuffle) sum    $\bsy{a}=\bsy{a_1}\oplus\cdots\oplus\bsy{a_k}$.  
See \cref{fig:prime} for an example of such a decomposition.

\begin{definition}\label{def:classical prime}
A sequence $\bsy{a}=(a_0,\ldots,a_{n-1})\in\mathbb{N}^n$ is a prime parking function if
\[\#\{j:a_j\leq i\}>i+1\qquad \mbox{for}\  i=0,\dots,n-2.
\]  
\end{definition}
Translating back to the parking process, a parking function of length $n$ is said to be \emph{prime} if at least $i+2$ cars want to park in spots $0,\dots,i$ for $0\leq i \leq n-2$.
We denote the set of classical parking functions of length $n$ by $\PF(n)$ and the set of prime parking functions of length $n$ by $\PPF(n)$. The following lemma provides a useful equivalent characterization.

\begin{lemma}\label{lem:classical prime equivalent}
Let $\bsy{a}=(a_0,\ldots,a_{n-1})\in\mathbb{N}^n$. Then $\bsy{a}\in\PPF(n)$ if and only if removing a $0$ from $\bsy{a}$ yields a parking function of length $n-1$.
\end{lemma}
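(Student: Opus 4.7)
The plan is to unwind both sides of the equivalence directly using the counting condition in \eqref{eq: classical case inequality definition} and in \cref{def:classical prime}. The key observation is that if $\bsy{a}'$ is obtained from $\bsy{a}$ by deleting one entry equal to $0$, then for every $i\geq 0$,
\[
\#\{j:a'_j\leq i\}=\#\{j:a_j\leq i\}-1,
\]
because the removed entry contributes to every one of these counts. So the two conditions differ only by a shift in the strict/weak inequality and in the length $n$ versus $n-1$, and the equivalence should follow from this observation.

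Before the two directions, I would verify that a prime parking function actually contains a $0$ to remove: setting $i=0$ in \cref{def:classical prime} gives $\#\{j:a_j\leq 0\}>1$, so $\bsy{a}$ has at least two zeros (which also ensures that the choice of which $0$ is removed is irrelevant since we only care about the multiset of values).

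For the forward direction, assume $\bsy{a}\in\PPF(n)$ and let $\bsy{a}'$ be the length-$(n-1)$ sequence obtained by deleting one $0$. Then for $i=0,\ldots,n-2$,
\[
\#\{j:a'_j\leq i\}=\#\{j:a_j\leq i\}-1>(i+1)-1=i,
\]
so $\#\{j:a'_j\leq i\}\geq i+1$. This is exactly \eqref{eq: classical case inequality definition} for a parking function of length $n-1$, so $\bsy{a}'\in\PF(n-1)$.

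For the converse, suppose $\bsy{a}$ contains at least one $0$ and that removing it yields $\bsy{a}'\in\PF(n-1)$. Then for $i=0,\ldots,n-2$,
\[
\#\{j:a_j\leq i\}=\#\{j:a'_j\leq i\}+1\geq (i+1)+1=i+2>i+1,
\]
which is precisely the condition of \cref{def:classical prime}. I do not anticipate any real obstacle; the only mildly delicate point is that one should confirm the prime condition automatically guarantees $\bsy{a}$ lives in the right range (taking $i=n-2$ in \cref{def:classical prime} forces every entry to satisfy $a_j\leq n-2$, so $\bsy{a}\in(\mathbb{N}_n)^n$ is automatic and $\bsy{a}$ is in particular a parking function of length $n$).
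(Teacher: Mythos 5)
Your proof is correct and follows essentially the same route as the paper: both arguments hinge on the single observation that deleting a $0$ decreases every count $\#\{j:a_j\leq i\}$ by exactly one, which converts the strict inequalities of \cref{def:classical prime} into the weak inequalities of \eqref{eq: classical case inequality definition} and vice versa. Your added remarks (that the $i=0$ case guarantees a $0$ exists to remove, and that the $i=n-2$ case forces $\bsy{a}\in(\mathbb{N}_n)^n$) are sound and only make explicit what the paper leaves implicit.
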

\begin{proof}
Suppose $\bsy{a}\in\PPF(n)$. Observe that the first inequality of \cref{def:classical prime} is $\#\{j:a_j\leq 0\}> 1$, so $0\in\bsy{a}$. Let $\bsy{a'}=(a_0',\ldots,a_{n-1}')$ be any sequence obtained by removing a $0$ from $\bsy{a}$. For $i=0,\dots,n-2$, we have $\#\{j:a'_j\leq i\}=\#\{j:a_j\leq i\}-1 \geq i+1$, so $\bsy{a'}\in\PF(n-1)$. Reversing the argument gives the other direction.
\end{proof}

Prime parking functions have appealing enumerative properties, making them intriguing objects to study in their own right. The number of prime parking functions of length $n$ is $(n-1)^{n-1}$ (see, for instance, \cite[Exercise 5.49(f)]{EC2}), and the number of prime increasing parking functions is $C_{n-1}$.

The objective of this paper is to extend and characterize primeness to various families of generalized parking functions. 
The first family is the set of vector parking functions, defined as sequences whose order statistics are bounded by a vector $\bsy{u}$.
Vector parking functions are related to empirical distributions, parking polytopes, 
and Gon\v{c}arov polynomials \cite{KungYan, PitmanStanley}.  The second family is the set of $(p,q)$-parking functions, a two dimensional generalization introduced by Cori and Poulalhon \cite{CP02}, which can be interpreted as
recurrent configurations in the sandpile model for complete bipartite graphs with an extra root. In addition, 
we discuss primeness for a common generalization of these two families--the two dimensional vector parking functions with weight matrix $\bsy{U}$. 

Our paper is structured as follows: In \cref{section:definitions}, we introduce and recall results for vector parking functions, $(p,q)$-parking functions, and two-dimensional vector parking functions. 
In \cref{section:3}, we propose a definition for prime $\bsy{u}$-parking functions that is compatible with the concept of prime decomposition. We  give closed formulas for the enumeration of prime $\bsy{u}$-parking functions when $\bsy{u}$ is an arithmetic sequence. 
In \cref{section:pq parking functions}, we propose an analogous definition for prime $(p,q)$-parking functions and provide closed formulas for their enumeration. 
{Additionally,} we generalize the results to two-dimensional vector parking functions with weight matrix $\bsy{U}$ defined by an affine transformation of the coordinates.  Finally, we list some open questions in \cref{section:final}. 

\section{Definitions}\label{section:definitions}

\subsection{Vector parking functions}

We now consider a parking process where each parking spot can have a nonnegative capacity. Suppose the parking spots are labeled by $\mathbb{N}$, and let $\bsy{u}=(u_0,\ldots,u_{n-1})$ with $1\leq u_0\leq \cdots\leq u_{n-1}$ encode the capacities of the parking spots, such that the capacity of spot $i$ is the multiplicity of $i+1$ in the vector $\bsy{u}$, notated as $m_{i+1}(\bsy{u})$, for $0\leq i\leq n-1$. In particular, $\sum_{i\geq 0} m_{i+1}(\bsy{u})=n$.  

A $\bsy{u}$-parking function can be described via a parking process, as follows. Let $\bsy{a}=(a_0,\ldots,a_{n-1})$ be a list of parking preferences with $a_i\geq 0$ for each $i\in\mathbb{N}_n$. A queue of $n$ cars with parking preferences given by $\bsy{a}$ enters the lot, and each car drives to its preferred spot and attempts to park. A car may park at a spot $j$ if there are fewer than $m_{j+1}(\bsy{u})$ cars already parked there. Otherwise it will attempt the next spot $j+1$, and so on. If there are no available spots past spot $j$, the car must exit the lot. We say that $\bsy{a}$ is a \emph{$\bsy{u}$-(vector) parking function} if all cars are able to park under this parking rule. See \Cref{ex:u parking function parking process example}.

\begin{example}\label{ex:u parking function parking process example}
Let $n=5$, and suppose $\bsy{u}=(1,1,3,3,9)$ and $\bsy{a}=(6,0,1,0,0)$. Then we have $m_1(\bsy{u})=2, m_3(\bsy{u})=2, m_9(\bsy{u})=1$, and $m_{j}(\bsy{u})=0$ for any $j\neq 1,3,9$. Thus there are 3 non-empty spots with total capacity 5: spots 0, 2, and 8 with capacities 2, 2, and 1, respectively. 
The cars with parking preferences given by $\bsy{a}$ then park as follows: Car 0 parks in spot 8, car 1 parks in spot 0, car 2 parks in spot 2, car 3 parks in spot 0, and car 4 parks in spot 2.
\end{example}
Similar to the definition of parking functions that satisfy the inequalities in \cref{eq: classical case inequality definition}, $\bsy{u}$-vector parking functions can also be defined by  a set of inequalities.
\begin{definition} 
\label{vectorpark}  
Let $\bsy{u} = (u_0, \ldots, u_{n-1})$ be a weakly increasing sequence of positive integers. We say a sequence $\bsy{a} = (a_0, \ldots, a_{n-1}) \in \mathbb{N}^n$ with order statistics $(a_{(0)},\ldots,a_{(n-1)})$ is a \textit{$\bsy{u}$-parking function} if $a_{(i)} < u_{i}$ for each $0 \leq i \leq n-1$. In other words, to be a $\bsy{u}$-parking function,  the number of cars that prefer the first $u_i$ spots need to be at least $i+1$: 
\begin{equation}\label{eq:u inequalities}
\#\{j: a_j< u_i\} \geq i+1,\qquad \mbox{for}\quad i=0,\ldots,n-1.
\end{equation}
\end{definition}

Note that the number of effective inequalities 
in  \cref{vectorpark} is the number of distinct values in the vector $\bsy{u}$. 
Moreover, for $i=n-1$, the inequality must be an equality. 

The set of increasing $\bsy{u}$-parking functions corresponds to the set of $\bsy{u}$-parking functions whose entries are weakly increasing. Denote the set of $\bsy{u}$-parking functions and their increasing counterpart by $\PF(\bsy{u})$ and $\IPF(\bsy{u})$, respectively. For readability, we will also sometimes write $\PF_n(\bsy{u})$ and $\IPF_n(\bsy{u})$ to emphasize the length $n$.

For a fixed vector $\bsy{u}=(u_0,\ldots,u_{n-1})$ with $1\leq u_0\leq \cdots\leq u_{n-1}$, let $L_{\bsy{u}}\in\Lc(u_{n-1},n)$ be the unique path with vertical steps having $x$-coordinates $u_0,\ldots,u_{n-1}$. Define the set of lattice paths with \emph{right boundary} $\bsy{u}$ to be the set of paths $P\in \Lc(u_{n-1},n)$ that lie to the left of $L_{\bsy{u}}$ and whose vertical edges never coincide with the vertical edges of $L_{\bsy{u}}$. In particular, if $\bsy{a}$ has order statistics $(a_{(0)},a_{(1)},\ldots,a_{(n-1)})$, then $L_{\bsy{a}}$ has right boundary $\bsy{u}$ if and only if $a_{(i)}<u_i$ for $0\leq i\leq n-1$. From the equivalence with Ineq. \eqref{eq:u inequalities}, we obtain that $\bsy{a}$ is a $\bsy{u}$-parking function if and only if $L_{\bsy{a}}$ has right boundary $\bsy{u}$. See \cref{fig:u-vector}(a) for an example.

 It is immediate that the set of increasing $\bsy{u}$-parking functions is equivalent to the set of lattice paths with right boundary $\bsy{u}$, which has been well-studied, for example, in \cite[Chapter 2]{Mohanty79}.

As in the classical case, $\bsy{u}$-parking functions can be represented by labeled lattice paths with right boundary $\bsy{u}$ by placing labels on the vertical edges to correspond to the order of the entries in the parking function. See \cref{fig:u-vector}(b) for an example.

\begin{figure}[H]
\begin{center}
\begin{tikzpicture}[scale=0.8]

\begin{scope}[xshift=0cm]
\node at (-2,3) {(a)};
\draw[step=1cm, thick, dotted] (0,0) grid (3,3);

\foreach \i in {0,1,2,3}
{
\node at (3.4,\i) {\tiny \i};
\node at (\i,3.3) {\tiny \i};
}

\draw[blue, very thick] (0,0)--(1,0)--(1,2)--(3,2)--(3,3);
\draw[blue, very thick] (1,0)--(1,2); 
\draw[blue, very thick] (3,2)--(3,3);

\end{scope}

\begin{scope}[xshift=9.5cm]
\node at (-2,3) {(b)};
\draw[step=1cm, thick, dotted] (0,0) grid (3,3);

\node at (-0.7, 0.5) {\tcr{$\mathbf{0}$}}; 
\node at (-0.7, 1.5) {\tcr{$\mathbf{0}$}}; 
\node at (-0.7, 2.5) {\tcr{$\mathbf{2}$}};

\node[red] at (0.2,0.5) {\tiny $0$};
\node[red] at (0.2,1.5) {\tiny $2$};
\node[red] at (2.2,2.5) {\tiny $1$};

\foreach \i in {0,1,2,3}
{
\node at (3.4,\i) {\tiny \i};
\node at (\i,3.3) {\tiny \i};
}

\draw[blue,very thick] (0,0)--(1,0)--(1,2);
\draw[blue,very thick,yshift=-0.03cm] (1,2)--(2,2);
\draw[blue,very thick] (2,2)--(3,2)--(3,3);
\draw[blue, very thick] (1,0)--(1,2); 
\draw[blue, very thick] (3,2)--(3,3);
\draw[red, very thick] (0,0)--(0,2)--(1,2);
\draw[red, very thick, yshift=0.03cm] (1,2)--(2,2);
\draw[red,very thick] (2,2)--(2,3)--(3,3); 

\end{scope}

\end{tikzpicture}
\caption{(a) For $\bsy{u}=(1,1,3)$, the lattice path $L_{\bsy{u}}=ENNEEN$ has vertical edges given by $\bsy{u}$. (b) For $\bsy{a}=(0,2,0)$, the labeled lattice path $L_{\bsy{a}}=NNEENE$ has right boundary $\bsy{u}$, confirming that $\bsy{a}$ is a $\bsy{u}$-parking function. 
}\label{fig:u-vector}
\end{center} 
\end{figure}
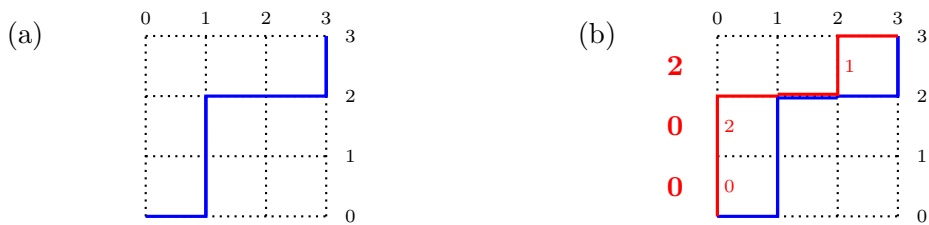

For general $\bsy{u}$, the number of $\bsy{u}$-parking functions is given by a determinantal formula \cite{KungYan, PitmanStanley}, which is not easy to evaluate. When $\bsy{u}$ is an arithmetic progression, it is well-known that the  formulas have a simple form.

\begin{theorem}\label{Theorem:classicalUPF}
Let $\bsy{u}$ be given by the arithmetic progression $u_i=a+bi$ with $a, b \in \mathbb{N}$. 
\begin{enumerate}
    \item The number of $\bsy{u}$-parking functions is $\#PF(\bsy{u})=a(a+bn)^{n-1}$. 
    \item  The number of increasing $\bsy{u}$-parking functions is 
        \[
        \#IPF(\bsy{u})=\frac{a}{a+n(b+1)} \binom{ a+n(b+1)}{n}. 
        \]
    \end{enumerate}
\end{theorem}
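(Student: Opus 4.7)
The plan is to prove both parts via Pollak's cycle argument, using the Dvoretzky--Motzkin cycle lemma in a way that directly exploits the arithmetic structure $u_i = a + bi$.

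For Part (1), set $N = a + bn$ and let $S = \{0, 1, \ldots, N-1\}^n$, so $|S| = (a+bn)^n$ and every $\bsy{u}$-parking function lies in $S$ (because $a_{(n-1)} < u_{n-1} \leq N-1$). The diagonal shift $\sigma(\bsy{a}) = \bsy{a} + (1, \ldots, 1) \pmod N$ acts freely on $S$ since $\gcd(1,N) = 1$, partitioning it into $(a+bn)^{n-1}$ orbits of size $N$. The key claim is that each orbit contains exactly $a$ $\bsy{u}$-parking functions, which would immediately yield the count $a(a+bn)^{n-1}$. To prove this, fix an orbit representative $\bsy{a}$ and let $m_p$ denote the multiplicity of $p$ in $\{a_0, \ldots, a_{n-1}\}$ modulo $N$. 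I would verify that a shifted sequence satisfies the $\bsy{u}$-parking condition precisely when the cumulative counts $\sum_{p=s}^{s+j-1} m_p$ exceed $(j-a)/b$ for every $j \in [1, N-1]$. Defining the weights $y_p := 1 - b m_p$, which satisfy $\sum_p y_p = N - bn = a$ and $y_p \leq 1$, this becomes the statement that all partial sums of $y$ starting at position $s$ stay strictly below the total $a$; the cycle lemma then guarantees exactly $a$ such cyclic starts per orbit.

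For Part (2), I would use the bijection between increasing $\bsy{u}$-parking functions and lattice paths with right boundary $\bsy{u}$, i.e., paths from $(0, 0)$ to $(a+b(n-1), n)$ whose vertical edges lie strictly to the left of $L_{\bsy{u}}$. Appending $b$ east steps at the end gives a length-$N$ sequence of $n$ N's and $a+bn$ E's with $N := a + n(b+1)$, and the boundary condition becomes the requirement that the $i$-th N appears by position $a + (i-1)(b+1)$ for each $i = 1, \ldots, n$. Weighting each N by $b$ and each E by $-1$, the total weight is $-a$, and the validity condition is that, starting from the chosen rotation, all partial sums stay strictly above $-a$. A second application of the cycle lemma shows that each cyclic class contains exactly $a$ valid starting positions, giving $\frac{a}{N}\binom{N}{n} = \frac{a}{a+n(b+1)}\binom{a+n(b+1)}{n}$ valid paths.

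The main obstacle in both parts is translating the parking or boundary condition into the precise ballot-type inequality that matches the cycle lemma's hypotheses (steps bounded above by $1$, total sum of absolute value $a$). Once this is in place, the arithmetic structure $u_i = a+bi$ produces walks whose total circular sum is exactly $\pm a$, and the factor $a$ in both formulas appears transparently as the number of cyclic rotations satisfying the ballot condition. The translation itself relies on a monotonicity argument: the $n$ individual conditions $a_{(i)} < u_i$ collapse to a uniform inequality by interpolating between the breakpoints $j = a + bi$, and an analogous interpolation handles the lattice-path formulation of Part (2).
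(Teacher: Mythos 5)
Your proposal is essentially correct, but note that the paper does not prove this theorem at all: it is stated as a well-known result with references to Kung--Yan and Pitman--Stanley, so there is no in-paper argument to compare against. Your cycle-lemma proof is the standard Pollak-style argument for the arithmetic-progression case, and the key translations check out. In Part (1), the condition that $\bsy{a}+s\bsy{1}$ be a $\bsy{u}$-parking function is $\sum_{p} m_p \geq i+1$ summed over the $a+bi$ residues starting at the appropriate shift, and your interpolation to all window lengths $j$ is valid because the left side is integer-valued and nondecreasing in $j$ while the breakpoints $j=a+bi$ are exactly where the right side crosses integers; the weights $y_p = 1-bm_p \leq 1$ with total $a$ then put you in cycle-lemma territory. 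In Part (2), the translation of the right-boundary condition to ``the $i$-th $N$ occurs by position $a+(i-1)(b+1)$'' is correct (the $i$-th $N$ at word position $p_i$ has $x$-coordinate $p_i - i < a+b(i-1)$), and the condition $p_n \leq N-b-1$ guarantees every valid word ends in at least $b$ east steps, so appending/stripping $b$ east steps is a genuine bijection. Two small points you should tighten in a written version: (i) the version of the cycle lemma you invoke in both parts is the one counting rotations for which all \emph{proper} partial sums stay strictly below (resp.\ above) the total; this follows from the standard ``all partial sums positive'' form by passing to complementary sums, i.e.\ reversing the cyclic word, and that one-line reduction should be stated. (ii) The expression $(j-a)/b$ is undefined when $b=0$; that case ($u_i \equiv a$, all $a^n$ sequences in $\mathbb{N}_a^n$ are parking functions) should be dispatched separately, as should $a=0$.
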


When $a=b=1$, \cref{Theorem:classicalUPF} recovers the enumeration for the classical case.

\subsection{$(p,q)$-parking functions}

Cori and Poulalhon introduced $(p,q)$-parking functions for fixed positive integers $p$ and $q$ \cite{CP02}. We will follow an equivalent formulation by Snider and Yan \cite{SY22}, which allows us to extend the definition to all $p,q\in\N$.

\begin{definition}[{\cite[Theorem 3.4]{SY22}}]\label{def:pq}
 Let $\bsy{a}=(a_0,\ldots,a_{p-1})$ and $\bsy{b}=(b_0,\ldots,b_{q-1})$ be two vectors in $\N^p\times\N^q$ with $(a_{(0)},\ldots,a_{(p-1)})$ and $(b_{(0)},\ldots,b_{(q-1)})$ the corresponding increasing rearrangements. Let $L_{\bsy{b}}\in\Lc(p,q)$ be the unique path whose vertical steps have $x$-coordinates $b_{(0)},\ldots,b_{(q-1)}$. Let $L^\perp_{\bsy{a}}\in\Lc(p,q)$ be the unique path whose horizontal steps have $y$-coordinates $a_{(0)},\ldots,a_{(p-1)}$. Then we say that $(\bsy{a},\bsy{b})$ is a \emph{$(p,q)$-parking function} if $L_{\bsy{b}}$ is weakly above $L^\perp_{\bsy{a}}$.
\end{definition}

\noindent Note that the symbol ``$\perp$'' is used since $L^\perp_{\bsy{a}}$ could be viewed as the reflection of $L_{\bsy{a}}$ across the line $y=x$. See \cref{fig:pq} for an example. 

As with classical parking functions, an \emph{increasing $(p,q)$-parking function} is a $(p,q)$-parking function whose entries $(a_0,\ldots,a_{p-1})$ and $(b_0,\ldots,b_{q-1})$ are weakly increasing. We denote the sets of $(p,q)$-parking functions by $\PF(p,q)$ and the increasing ones by $\IPF(p,q)$. An increasing $(p,q)$-parking function $(\bsy{i},\bsy{j})$ can be identified with its corresponding pair of lattice paths $L^\perp_{\bsy{i}}$ and $L_{\bsy{j}}$. A $(p,q)$-parking function $(\bsy{a},\bsy{b})$ can be represented by labeling the edges of $L^\perp_{\bsy{a}}$ and $L_{\bsy{b}}$ as follows. The horizontal edges of $L^\perp_{\bsy{a}}$ are labeled with elements of $\N_p$, where edge $i$ has $y$-coordinate $a_i$, with labels increasing from left to right for edges with the same $y$-coordinate. Similarly, the vertical edges of $L_{\bsy{b}}$ are labeled with elements of $\N_q$, where edge $j$ has $x$-coordinate $b_j$, with labels increasing from bottom to top for edges with the same $x$-coordinate. See \cref{ex:pq} for an example. 

\cref{def:pq} is equivalent to the following characterization.

\begin{lemma}
\label{IPFLemma} 
   The pair $(\bsy{a},\bsy{b})\in\N^p\times\N^q$ with corresponding order statistics $(a_{(0)},\ldots,a_{(p-1)})$ and $(b_{(0)},\ldots,b_{(q-1)})$ is a $(p,q)$-parking function if and only if the following hold:
    \begin{itemize}
        \item[i)] $\#\{j:a_j<i+1\}\geq b_{(i)}$
for $i=0,\ldots,q-1$, and
        \item[ii)] $\#\{j:b_j<i+1\}\geq a_{(i)}$
for $i=0,\ldots,p-1$.
    \end{itemize}
\end{lemma}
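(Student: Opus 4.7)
The plan is to translate the geometric condition ``$L_{\bsy{b}}$ weakly above $L^\perp_{\bsy{a}}$'' of \cref{def:pq} into explicit inequalities on the entries of $\bsy{a}$ and $\bsy{b}$ by reading off the extremal coordinates of the two paths at each integer line $x=i$ or $y=i$. Conditions (i) and (ii) will then emerge as the two dual forms of this inequality.

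First, for any monotone NE lattice path $P$ from $(0,0)$ to $(p,q)$, let $H_P(i)$ denote the largest $y$-coordinate of a lattice point on $P$ with $x$-coordinate $i$, and $W_P(i)$ the largest $x$-coordinate of a lattice point on $P$ with $y$-coordinate $i$. It is standard (and straightforward from a local analysis at each lattice point) that the statement ``$L_{\bsy{b}}$ is weakly above $L^\perp_{\bsy{a}}$'' is equivalent to the column-by-column inequality $H_{L_{\bsy{b}}}(i) \geq H_{L^\perp_{\bsy{a}}}(i)$ for all $i \in \{0,1,\dots,p\}$, and equivalently, by looking at horizontal lines, to the row-by-row inequality $W_{L_{\bsy{b}}}(i) \leq W_{L^\perp_{\bsy{a}}}(i)$ for all $i \in \{0,1,\dots,q\}$.

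Next, I would compute these quantities directly from the definitions. Since the vertical edges of $L_{\bsy{b}}$ have $x$-coordinates $b_{(0)} \leq \cdots \leq b_{(q-1)}$, the number of such edges weakly to the left of $x=i$ is
\[
H_{L_{\bsy{b}}}(i) \;=\; \#\{j : b_j \leq i\} \;=\; \#\{j : b_j < i+1\},
\]
while the $i$-th vertical edge itself gives $W_{L_{\bsy{b}}}(i) = b_{(i)}$ for $0\leq i\leq q-1$. Symmetrically, since the horizontal edges of $L^\perp_{\bsy{a}}$ have $y$-coordinates $a_{(0)} \leq \cdots \leq a_{(p-1)}$, one obtains $H_{L^\perp_{\bsy{a}}}(i) = a_{(i)}$ for $0\leq i\leq p-1$ and $W_{L^\perp_{\bsy{a}}}(i) = \#\{j : a_j < i+1\}$ for $0\leq i\leq q-1$.

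Substituting these expressions into the two dual forms of the path inequality yields exactly $\#\{j : b_j < i+1\} \geq a_{(i)}$ for $0 \leq i \leq p-1$ (which is (ii)) and $b_{(i)} \leq \#\{j : a_j < i+1\}$ for $0 \leq i \leq q-1$ (which is (i)); at the boundary values $i=p$ or $i=q$ both inequalities are automatic. Since either (i) or (ii) alone is equivalent to the path condition, their conjunction is as well. I do not anticipate a genuine obstacle; the only care needed is in handling runs of equal entries in $\bsy{a}$ or $\bsy{b}$, which correspond to consecutive vertical (or horizontal) steps in the paths, and this is exactly what the choice of the maxima $H_P$ and $W_P$ (rather than the corresponding minima) is designed to handle cleanly.
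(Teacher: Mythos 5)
Your proof is correct. It reaches the same inequalities as the paper but by a somewhat different route: the paper argues by contraposition, locating an explicit crossing point $(c_1,r_2)$, $(c_2,r_1)$ (respectively a point $(b_{(i)}-1,i+1)$ on $L^\perp_{\bsy{a}}$ above a point of $L_{\bsy{b}}$) and deriving a violated inequality, whereas you encode the ``weakly above'' relation once and for all via the column and row profiles $H_P$ and $W_P$ and then simply read off that condition i) is the row-profile inequality and condition ii) is the column-profile inequality. Your computations $H_{L_{\bsy{b}}}(i)=\#\{j:b_j<i+1\}$, $W_{L_{\bsy{b}}}(i)=b_{(i)}$, $H_{L^\perp_{\bsy{a}}}(i)=a_{(i)}$, $W_{L^\perp_{\bsy{a}}}(i)=\#\{j:a_j<i+1\}$ are all correct, and the boundary cases $i=p,q$ are indeed vacuous. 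What your packaging buys is a cleaner, case-free argument that also makes explicit the slightly stronger fact that each of i) and ii) is \emph{individually} equivalent to the parking condition (something the paper's proof contains only implicitly); what it costs is reliance on the unproved, though standard, equivalence between ``$L_{\bsy{b}}$ weakly above $L^\perp_{\bsy{a}}$'' and the pointwise profile inequalities, which the careful reader would want spelled out since the paper never formally defines ``weakly above.''
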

\begin{proof}
    Let $(\bsy{a},\bsy{b})\in\N^p\times\N^q$ with order statistics $(a_{(0)},\ldots,a_{(p-1)})$ and $(b_{(0)},\ldots,b_{(q-1)})$. First, if $(\bsy{a},\bsy{b})$ is not a $(p,q)$-parking function, then there exist nonnegative integers $c_1 < c_2$ and $r_1 < r_2$ such that $L^\perp_{\bsy{a}}$ passes through $(c_1,r_2)$ and $L_{\bsy{b}}$ passes through $(c_2,r_1)$. This implies that $a_{(c_1)} \ge r_2$ and $b_{(r_1)} \ge c_2$. In turn, this forces
    \[
    \# \{ j : b_j < c_2 \} \le r_1 < r_2 \le a_{(c_1)} \le a_{(c_2-1)},
    \]
    so that condition ii) of the lemma is not satisfied.

    Conversely, if the inequalities in the lemma are not satisfied, assume without loss of generality that there exists some $0 \le i \le q-1$ such that $\#\{ j : a_j < i+1 \} < b_{(i)}$. In particular, this implies that $a_{(b_{(i)}-1)} \ge i+1$, so that $L^\perp_{\bsy{a}}$ passes through the point $(b_{(i)}-1,i+1)$. However, $L_{\bsy{b}}$ passes through the point $(b_{(i)},i)$ by construction, so $L_{\bsy{b}}$ must not lie weakly above $L^\perp_{\bsy{a}}$, i.e. $(\bsy{a},\bsy{b})$ is not a $(p,q)$-parking function. Thus the two conditions are equivalent.
\end{proof}

\begin{example}

\label{ex:pq}
For $(p,q)=(3,4)$, the pair $\bsy{a}=(3,0,3)$ and $\bsy{b}=(1,0,1,0)$ is a $(p,q)$-parking function whose lattice path representations $L^\perp_{\bsy{a}}$ and $L_{\bsy{b}}$ are shown in \cref{fig:pq}(a) with the labels on the edges representing the indices of the corresponding elements.  On the other hand, the pair $\bsy{a'}=(0,3,3)$ and $\bsy{b'}=(0,0,2,2)$ is not a $(p,q)$-parking function since $L^\perp_{\bsy{a'}}$  and $L_{\bsy{b'}}$ cross, as shown in \cref{fig:pq}(b).
\end{example}

\begin{figure}[H]
\begin{center}
\begin{tikzpicture}[scale=0.8]
\node at (-2,4) {(a)};
\draw[step=1cm, thick, dotted] (0,0) grid (3,4);

\node at (.5, -.5) {\tcr{$\mathbf{0}$}}; 
\node at (1.5,-.5) {\tcr{$\mathbf{3}$}}; 
\node at (2.5, -.5) {\tcr{$\mathbf{3}$}}; 
\node at (-0.7, 0.5) {\tcb{$\mathbf{0}$}}; 
\node at (-0.7, 1.5) {\tcb{$\mathbf{0}$}}; 
\node at (-0.7, 2.5) {\tcb{$\mathbf{1}$}}; 
\node at (-0.7, 3.5) {\tcb{$\mathbf{1}$}}; 

\foreach \i in {0,1,2,3,4}
{
\node at (3.5,\i) {\tiny \i};
}
\foreach \i in {0,1,2,3}
{
\node at (\i,4.5) {\tiny \i};
}

\node[red] at (0.5,0.2) {\tiny 1};
\node[red] at (1.5,3.2) {\tiny 0};
\node[red] at (2.5,3.2) {\tiny 2};
\node[blue] at (-0.2,0.5) {\tiny 1};
\node[blue] at (-0.2,1.5) {\tiny 3};
\node[blue] at (.8,2.5) {\tiny 0};
\node[blue] at (.8,3.5) {\tiny 2};

\draw[red, very thick] (0,0)--(1,0)--(1,2);
\draw[red, very thick,xshift=0.03cm] (1,2)--(1,3);
\draw[red, very thick] (1,3)--(3,3)--(3,4); 
\draw[blue, very thick] (0,0)--(0,2)--(1,2);
\draw[blue, very thick,xshift=-0.03cm] (1,2)--(1,3);
\draw[blue, very thick] (1,3)--(1,4)--(3,4); 

\begin{scope}[xshift=10cm]
\node at (-2,4) {(b)};

\draw[step=1cm, thick, dotted] (0,0) grid (3,4);

\node at (.5, -.5) {\tcr{$\mathbf{0}$}}; 
\node at (1.5,-.5) {\tcr{$\mathbf{3}$}}; 
\node at (2.5, -.5) {\tcr{$\mathbf{3}$}}; 
\node at (-0.7, 0.5) {\tcb{$\mathbf{0}$}}; 
\node at (-0.7, 1.5) {\tcb{$\mathbf{0}$}}; 
\node at (-0.7, 2.5) {\tcb{$\mathbf{2}$}}; 
\node at (-0.7, 3.5) {\tcb{$\mathbf{2}$}}; 

\foreach \i in {0,1,2,3,4}
{
\node at (3.5,\i) {\tiny \i};
}
\foreach \i in {0,1,2,3}
{
\node at (\i,4.5) {\tiny \i};
}

\draw[red, very thick] (0,0)--(1,0)--(1,2)--(1,3)--(3,3)--(3,4); 
\draw[blue, very thick] (0,0)--(0,2)--(2,2)--(2,4)--(3,4);    
\end{scope}
\end{tikzpicture}
\caption{
(a) For the pair $\bsy{a}=(3,0,3)$ and $\bsy{b}=(1,0,1,0)$, $L_{\bsy{b}}=NNENNEE$ is weakly above $L^\perp_{\bsy{a}}=ENNNEEN$, confirming that $(\bsy{a},\bsy{b})$ is a $(3,4)$-parking function. (b) For the pair $\bsy{a'}=(0,3,3)$ and $\bsy{b'}=(0,0,2,2)$, $L_{\bsy{b'}}=NNEENNE$ is not weakly above $L^{\perp}_{\bsy{a'}}=ENNNEEN$, and thus $(\bsy{a'},\bsy{b'})$ is not a (3,4)-parking function.
}\label{fig:pq}
\end{center}
\end{figure}
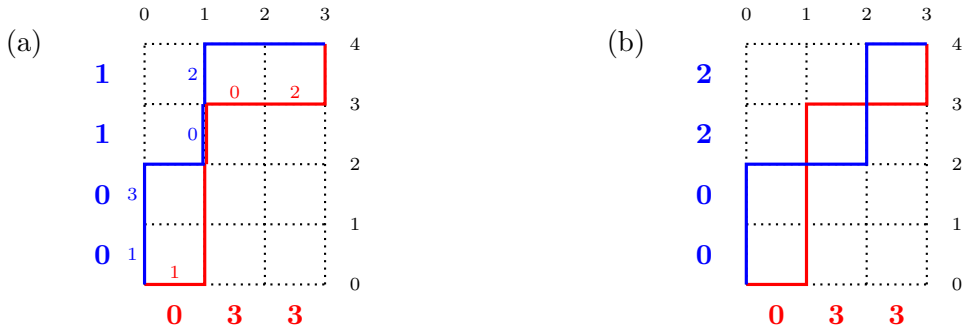

In \cite{CP02}, Cori and {Poulalhon} computed that the number of $(p,q)$-parking functions is 
\[
\#\PF(p,q)=(p+q+1)(p+1)^{q-1}(q+1)^{p-1},
\]
and the number of increasing $(p,q)$-parking functions, with $n=p+q$, is the Narayana number 
\[
\#\IPF(p,q)=\frac{1}{n+1}\binom{n+1}{p}\binom{n+1}{q}.
\] 

{For our discussion, we extend the definition of $(p,q)$-parking functions to the case where $pq=0$. 
Explicitly, when $p=0$ and $q=0$, there is only one $(p,q)$-parking function, namely, $(\emptyset, \emptyset)$, which also counts as increasing. When $p=0$ but $q >0$, there is only one $(p,q)$-parking function, $(\emptyset, (0, \dots, 0))$, which is also increasing. In this case, $L^\perp_\emptyset$ is the unique lattice path in $\Lc(0,q)$ with no horizontal steps --- namely, the path $N^q$ --- so that $L^\perp_\emptyset$ and $L_{(0,\dots,0)}$ coincide. Similarly in the case that $p > 0$ and $q = 0$, we only have the increasing $(p,q)$-parking function $((0,\dots,0),\emptyset)$, and $L^\perp_{(0,\dots,0)} = L_\emptyset = E^p \in \Lc(p,0)$. Note that the above counting formulas still hold for these cases.}

\subsection{Two-dimensional vector parking functions}
As will be seen further on, $(p,q)$-parking functions are a special case of the more general set of two-dimensional vector parking functions, also known as \emph{$\bsy{U}$-parking functions}. 
{$\bsy{U}$-parking functions originated in the study of bivariate Gon\v{c}arov polynomials, which are solutions to the Gon\v{c}arov interpolation problem \cite{KLY14}. Gon\v{c}arov polynomials $\{g_n\}_{n\geq 0}$ provide a natural algebraic tool to enumerate the $\bsy{u}$-parking functions, with the basic relations that $\# \PF(\bsy{u})= g_n(x; x-u_0, \dots, x-u_{n-1})$; see \cite{KungYan}. Similarly, one can consider the Gon\v{c}arov interpolation problem in higher dimensions; in two dimensions, a basis for the set of solutions is given by bivariate Gon\v{c}arov polynomials associated with a certain set of nodes $\bsy{U}\subset \mathbb{N}^2$. Then, the corresponding combinatorial objects are the two-dimensional vector parking functions bounded by entries of the weight matrix given by $\bsy{U}$. The exact definition is stated below.}

 For $p,q \in \mathbb{N}$, let $ \bsy{U} \subset \mathbb{N}^2$ be a set of nodes $\bsy{U} = \{z_{k,\ell}=(u_{k,\ell},v_{k,\ell}) : (0,0) \leq (k,\ell) \leq (p,q) \}$. Define $G_{p,q}(\bsy{U})$ to be the directed graph whose vertices are the lattice points $\{(k,\ell) : (0,0) \leq (k,\ell) \leq (p,q)\}$ and whose edges are all unit-length north-steps and east-steps connecting its vertices. Every edge $e$ of $G_{p,q}(\bsy{U})$ is assigned a weight $\wt(e)$ given by
\[ \wt(e) = \begin{cases} 
      u_{k,\ell} & \textnormal{if } e \textnormal{ is an east step from } (k,\ell) \textnormal{ to } (k+1,\ell), \\
      v_{k,\ell} & \textnormal{if } e \textnormal{ is a north step from } (k,\ell) \textnormal{ to } (k,\ell+1),
   \end{cases}
\]
as shown in \cref{fig:U}(a):

\begin{figure}[H]
\begin{center}
\begin{tikzpicture}[scale=0.6]

\begin{scope}[xshift=0cm]
\node at (-2,8) {\large (a)};

\draw[step=2cm, thick] (0,0) grid (6,8);

\foreach \j in {0,1,2,3}
{
\foreach \i in {0,1,2}
{
\node at (.45+2*\i,1+2*\j) {\tiny $v_{\i,\j}$};
\node at (1+2*\i,.3+2*\j) {\tiny $u_{\i,\j}$};
\draw[->, thick] (2*\i,2*\j) -- (2*\i+1.7,2*\j);
\draw[->, thick] (2*\i,2*\j) -- (2*\i,2*\j+1.7);
}
}
\foreach \i in {0,1,2}
{
\node at (1+2*\i,.3+2*5-2) {\tiny $u_{\i,4}$};
\draw[->, thick] (2*\i,2*5-2) -- (2*\i+1.7,2*5-2);
}
\foreach \j in {0,1,2,3}
{
\node at (.45+2*4-2,1+2*\j) {\tiny $v_{3,\j}$};
\draw[->, thick] (2*4-2,2*\j) -- (2*4-2,2*\j+1.7);
}
\end{scope}

\begin{scope}[xshift=11cm]
\node at (-2,8) {\large (b)};
\draw[step=2cm, dotted, thin] (0,0) grid (6,8);
   
\foreach \j in {1,2,3,4}
{
\foreach \i in {1,2,3}
{
\node at (.3+2*\i-2,1+2*\j-2) {\tiny \i};
\node at (1+2*\i-2,.3+2*\j-2) {\tiny \j};
}
}

\foreach \i in {1,2,3}
{
\node at (1+2*\i-2,.3+2*5-2) {\tiny 5};
}
\foreach \j in {1,2,3,4}
{
\node at (.3+2*4-2,1+2*\j-2) {\tiny 4};
}

\draw[black,line width=.8mm,dashed] (0,0)--(0,4)--(4,4)--(4,6)--(6,6)--(6,8);
\end{scope}
\end{tikzpicture}
\end{center}
\caption{(a) For $\bsy{U}=\{z_{k,\ell}=(u_{k,\ell},v_{k,\ell}):(0,0)\leq(k,\ell)\leq(3,4)\}$, we show the edge weights on the directed graph $G_{3,4}(\bsy{U})$. Subsequent figures will omit the arrows. (b) We show the path $P=NNEENEN\in\mathcal{L}(3,4)$ and the weights on the edges of $G_{3,4}(\bsy{U})$ for $\bsy{U}$ from \cref{ex:two-dim}. The edge weights on $P$ are 
$(\wt(e_1),\ldots,\wt(e_7))=(1,1,3,3,3,4,4)$, corresponding to the weight sequences $(3,3,4)$ and $(1,1,3,4)$ when restricted to the horizontal and vertical edges, respectively.}\label{fig:U}
\end{figure}

For a lattice path $P\in \mathcal{L}(p,q)$, we write $P = e_1 e_2 \ldots e_{p+q}$, where $e_i \in \{E,N\}$, to record the sequence of steps of $P$. Let $(\bsy{a},\bsy{b})$ be a pair of nonnegative integer sequences with $\bsy{a} = (a_0, \ldots, a_{p-1})$ and $\bsy{b} = (b_0, \ldots, b_{q-1})$ with respective order statistics $a_{(0)} \leq \cdots \leq a_{(p-1)}$ and $b_{(0)} \leq  \cdots \leq b_{(q-1)}$. Then we say that the pair $(\bsy{a},\bsy{b})$ is \emph{bounded by $P$ with respect to $\bsy{U}$} if, for $r = 1, \ldots, p+q$,
\[ \begin{cases} 
      a_{(i)} < \wt(e_r) & \textnormal{if } e_r \textnormal{ is the $i$-th east step of } P, \text{ or}\\
      b_{(j)} < \wt(e_r)  & \textnormal{if } e_r \textnormal{ is the $j$-th north step of } P.
   \end{cases}
\]
It should be noted that the bound is \emph{strict}, similar to the concept of the right boundary. See \cref{ex:two-dim} for an example.

We now provide the definition of two-dimensional vector parking functions as established in \cite{KLY14}.
\begin{definition}[{\cite[Definition 3]{KLY14}}] \label{def:2Dvectorparkingfunctions}
Suppose $\bsy{U} =  \{z_{k,\ell}=(u_{k,\ell},v_{k,\ell}) : (0,0) \leq (k,\ell) \leq (p,q) \} \subset \mathbb{N}^2$ is a set of nodes satisfying $u_{k,\ell} \leq u_{k',\ell'}$ and $v_{k,\ell} \leq v_{k',\ell'}$ when $k \leq k'$ and $\ell \leq \ell'$. A pair of sequences $(\bsy{a},\bsy{b})\in\mathbb{N}^p\times\mathbb{N}^q$ is a \emph{two-dimensional $\bsy{U}$-parking function} if and only if $(\bsy{a},\bsy{b})$ are bounded by some lattice path $P\in\mathcal{L}(p,q)$ with respect to $\bsy{U}$. 
We refer to $\bsy{U}$ as the \emph{weight matrix}. 
We denote the sets of two-dimensional $\bsy{U}$-parking functions by $\PF^{(2)}(\bsy{U})$
and the increasing ones by $\IPF^{(2)}(\bsy{U})$. 
To highlight the dimensions of $\bsy{U}$, we may also use the  notation $\PF^{(2)}_{p,q}(\bsy{U})$ and $\IPF^{(2)}_{p,q}(\bsy{U})$.
\end{definition}

Note that the lattice path, if it exists, is not necessarily unique.

\begin{example}\label{ex:two-dim}
Let $(p,q)=(3,4)$, and 
$\bsy{U}= \{z_{k,\ell}=(\ell+1,k+1) : (0,0) \leq (k,\ell) \leq (3,4) \}$. Consider the pair $\bsy{a}=(2,3,2)$ and $\bsy{b}=(3,0,1,0)$. The path $P=NNEENEN$, shown in \cref{fig:U}(b), has horizontal and vertical edge weights $(3,3,4)$ and $(1,1,3,4)$, which bound the order statistics of $\bsy{a}$ and $\bsy{b}$, respectively. Thus $(\bsy{a},\bsy{b})$ is a $\bsy{U}$-parking function. 

On the other hand, consider $\bsy{a'}=(4,3,3)$ and $\bsy{b'}=(2,0,2,1)$. Suppose there exists a lattice path $P'\in\mathcal{L}(3,4)$ whose horizontal steps have weights strictly greater than $(3,3,4)$ with respect to $\bsy{U}$. 
Then $P'$ must begin with three $N$ steps, making the weights on its first three vertical steps $1$, which means the horizontal weights of $P'$ cannot be strictly greater than $(0,1,2,2)$. Thus $(\bsy{a'},\bsy{b'})$ is not a $\bsy{U}$-parking function as there is no lattice path bounding $(\bsy{a'},\bsy{b'})$ with respect to $\bsy{U}$.
\end{example}

In \cite{SY22}, Snider and Yan showed that $(p,q)$-parking functions are in fact two-dimensional $\bsy{U}$-parking functions. We state their result as follows.
\begin{theorem}[{\cite[Theorem 3.1]{SY22}}]\label{thm:pq is U}
Set $\bsy{U_0}=\{z_{k,\ell}=(\ell+1,k+1) \, : \, (0,0)\leq (k,\ell) \leq (p,q)\}$. A pair 
 $(\bsy{a}, \bsy{b}) \in \mathbb{N}^p \times \mathbb{N}^q$ is a $(p,q)$-parking function if and only if $(\bsy{a}, \bsy{b})\in\PF^{(2)}_{p,q}(\bsy{U_0})$.
\end{theorem}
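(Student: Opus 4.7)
The plan is to exhibit, in each direction, a canonical bounding lattice path. First, I would unpack what $\bsy{U_0}$-bounding means concretely: since the east step leaving $(k,\ell)$ has weight $\ell+1$ and the north step leaving $(k,\ell)$ has weight $k+1$, the condition ``$(\bsy{a}, \bsy{b})$ is bounded by $P \in \Lc(p,q)$ with respect to $\bsy{U_0}$'' is equivalent to requiring that, for each $i$ and $j$, the $i$-th east step of $P$ sits at $y$-coordinate at least $a_{(i)}$ and the $j$-th north step of $P$ sits at $x$-coordinate at least $b_{(j)}$.

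For the forward implication, suppose $(\bsy{a}, \bsy{b}) \in \PF(p,q)$, so $L_{\bsy{b}}$ lies weakly above $L^\perp_{\bsy{a}}$. The natural candidate for the witnessing path is $P := L^\perp_{\bsy{a}}$. The east-step condition holds by construction, since the $i$-th east step of $L^\perp_{\bsy{a}}$ has $y$-coordinate exactly $a_{(i)}$. For the north-step condition, the relation ``$L_{\bsy{b}}$ weakly above $L^\perp_{\bsy{a}}$'' converts, via the observation that at each height $j$ the $x$-coordinate of $L_{\bsy{b}}$ is at most that of $L^\perp_{\bsy{a}}$, into the inequality that the $j$-th north step of $L^\perp_{\bsy{a}}$ has $x$-coordinate at least $b_{(j)}$, which is exactly what is needed.

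For the converse, suppose $P$ bounds $(\bsy{a}, \bsy{b})$ with respect to $\bsy{U_0}$. I would use the standard lattice-path identity that the $x$-coordinate of a path at height $h$ equals the number of east steps it takes at $y$-level less than $h$. From this, the east-step bound on $P$ yields that $P$ lies weakly above $L^\perp_{\bsy{a}}$ as a subset of $\Lc(p,q)$, and the symmetric argument using the dual identity with the north-step bound yields that $L_{\bsy{b}}$ lies weakly above $P$. Chaining the two comparisons gives $L_{\bsy{b}}$ weakly above $L^\perp_{\bsy{a}}$, which is the defining condition for $(\bsy{a}, \bsy{b})$ to be a $(p,q)$-parking function.

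I do not anticipate any serious obstacle; the core content is the translation between a pointwise bound on the coordinates of the distinguished steps of a path and the pointwise ``above/below'' comparison between lattice paths, which is a routine lemma. The one subtlety worth noting explicitly is the degenerate case $pq = 0$, but the explicit boundary definitions of $(p,q)$-parking functions given just before the statement, together with the fact that $\Lc(p,0)$ and $\Lc(0,q)$ each contain a unique path, make these cases immediate.
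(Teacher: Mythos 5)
Your argument is correct. Note that the paper itself does not prove this statement --- it is imported verbatim from \cite[Theorem 3.1]{SY22} --- so there is no internal proof to compare against; judged on its own, your ``sandwich'' argument is the natural one and is complete. The translation of the $\bsy{U_0}$-bound into step-coordinate inequalities is right (the east step leaving $(k,\ell)$ has weight $\ell+1$, so $a_{(i)}<\ell+1$ is exactly ``$y$-coordinate at least $a_{(i)}$''), the choice $P=L^\perp_{\bsy{a}}$ witnesses the forward direction, and the converse follows from the standard equivalence between the pointwise comparison of the $y$-levels (resp.\ $x$-levels) of corresponding east (resp.\ north) steps and the ``weakly above'' relation, together with its transitivity applied to $L^\perp_{\bsy{a}}\preceq P\preceq L_{\bsy{b}}$. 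Your handling of the $pq=0$ cases via the uniqueness of the path in $\Lc(p,0)$ and $\Lc(0,q)$ also matches the conventions set out before the theorem.
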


In general, the exact value of the number of $\bsy{U}$-parking functions is hard to compute. However, when $\bsy{U}$ is given by an affine linear transformation, an explicit formula is known.

\begin{theorem}[{\cite[Corollary 4.2]{LY16}}]
    Fix $p,q \in \mathbb{N}$. Let $\bsy{U}=\{z_{k,\ell}=(u_{k,\ell}, v_{k,\ell}): (0,0) \leq (k,\ell) \leq (p,q)\}$  be given by the affine linear function 
\begin{equation*}
\left(\begin{array}{c} 
u_{k,\ell} \\ 
v_{k,\ell} 
\end{array} \right) =\left( \begin{array}{cc} 
  a & b  \\
  c & d 
  \end{array} 
  \right)
  \left(\begin{array}{c} 
k \\ 
\ell
\end{array} \right) + 
\left(\begin{array}{c} 
s \\ 
t
\end{array} \right), 
\end{equation*} 
  with $a, b, c, d, s, t \in \mathbb{N}$.  
Then the number of increasing $\bsy{U}$-parking functions of size $(p,q)$ is  
\begin{equation} \label{eq:in-goncarov}
\#\IPF^{(2)}_{p,q}(\bsy{U}) = \frac{1}{p!q!}(st+tbq+scp)(s+ap+bq+1)^{(p-1)} (t+cp+dq+1)^{(q-1)} \, ,
\end{equation}
where $x^{(n)}=x(x+1)\cdots (x+k-1) = k! \binom{x+k-1}{k}$.  
The number of $\bsy{U}$-parking functions is 
\begin{equation}\label{eq:goncarov}
\#\PF^{(2)}_{p,q}(\bsy{U})=(st+tbq+scp) (s+ap+bq)^{p-1} (t+cp+dq)^{q-1}. 
\end{equation}
\end{theorem}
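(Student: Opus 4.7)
The plan is to follow the algebraic framework of bivariate Gon\v{c}arov polynomials introduced in \cite{KLY14} and specialized in \cite{LY16}, which the introduction already flags as the natural tool for enumerating $\bsy{U}$-parking functions. Recall that the bivariate Gon\v{c}arov polynomial $g_{p,q}(x,y;\bsy{U})$ is the unique polynomial of bidegree $(p,q)$ characterized by the biorthogonality condition $\partial_x^k \partial_y^\ell g_{p,q}(x,y;\bsy{U})\bigl|_{(x,y)=z_{k,\ell}} = p!\,q!\,\delta_{(p,q),(k,\ell)}$, and that $\#\PF^{(2)}_{p,q}(\bsy{U})$ and $\#\IPF^{(2)}_{p,q}(\bsy{U})$ admit expressions as evaluations of $g_{p,q}$ with a shifted node set, in analogy with the one-dimensional identity $\#\PF(\bsy{u}) = g_n(x;x-u_0,\dots,x-u_{n-1})$ mentioned earlier. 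The non-increasing count corresponds to an unsymmetrized evaluation producing pure powers, while the increasing count comes from a symmetrized (order statistic) evaluation producing rising factorials together with the division by $p!\,q!$.

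The heart of the argument is to exploit the affine structure of $\bsy{U}$. Because $z_{k,\ell} = (ak+b\ell+s,\, ck+d\ell+t)$ depends linearly on $(k,\ell)$, the partial derivative operators $\partial_x,\partial_y$ interact with the node map in a particularly clean way, so that the Gon\v{c}arov interpolation problem admits a closed-form Abel-type solution. The plan is to reduce the problem to a two-dimensional Abel identity generalizing the one-dimensional Abel identity $\sum_{k=0}^{n}\binom{n}{k} x(x+k\alpha)^{k-1}(y-k\alpha)^{n-k}=(x+y)^n$ that underlies the formula $a(a+bn)^{n-1}$ in \cref{Theorem:classicalUPF}. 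Extracting the coefficient of the relevant monomial then gives the products on the right-hand side of \eqref{eq:in-goncarov} and \eqref{eq:goncarov}. As a sanity check one can specialize to $b=c=0$, in which case the interpolation problem decouples into two independent one-dimensional $\bsy{u}$-parking enumerations, the prefactor $(st+tbq+scp)$ collapses to $st$, and the formulas reduce to the product of two copies of \cref{Theorem:classicalUPF}.

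The main obstacle is precisely the prefactor $(st+tbq+scp)$. In one dimension the analog is simply $a$, reflecting a single pinned boundary condition (or a single fixed position in a Pollak-style cycle argument). In two dimensions the prefactor has three terms that couple both axes through the off-diagonal entries $b$ and $c$, and verifying that it arises as the correct boundary contribution in the bivariate Abel identity requires a nontrivial algebraic computation that does not reduce to two one-dimensional instances. An alternative route worth pursuing is a two-dimensional cycle-lemma argument: embed pairs $(\bsy{a},\bsy{b})$ into an enlarged toroidal grid of dimensions $(s+ap+bq)\times(t+cp+dq)$, and compute by symmetry the fraction of configurations that are $\bsy{U}$-parking functions. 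Either approach ultimately localizes the difficulty in the same place, namely, identifying and proving the mixed-axis identity that produces the three-term prefactor.
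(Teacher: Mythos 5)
This theorem is imported verbatim from \cite[Corollary~4.2]{LY16}; the paper gives no proof of its own, so the only question is whether your argument would actually establish the formulas. As written, it would not: it is a proof \emph{plan} whose central step is explicitly deferred. You correctly identify the framework that \cite{LY16} uses --- bivariate Gon\v{c}arov polynomials and an Abel-type summation --- and your $b=c=0$ sanity check is consistent (the prefactor collapses to $st$ and both formulas factor into two instances of \cref{Theorem:classicalUPF}). But the entire content of the theorem is the closed form, and you concede that ``verifying that [the prefactor $(st+tbq+scp)$] arises as the correct boundary contribution in the bivariate Abel identity requires a nontrivial algebraic computation'' which you do not perform. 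Three essential ingredients are missing: (i) a precise statement of the relation between $\#\PF^{(2)}_{p,q}(\bsy{U})$ and an evaluation of the bivariate Gon\v{c}arov polynomial (you only gesture at it ``in analogy with'' the one-dimensional case, but the two-dimensional analogue involves a sum over lattice paths in $\mathcal{L}(p,q)$ and is not a single evaluation); (ii) the statement and proof of the mixed-axis Abel identity itself; and (iii) the symmetrization argument converting pure powers into rising factorials for the increasing count, which is not merely ``divide by $p!\,q!$'' since the increasing and unrestricted counts are not related by a uniform factor (compare the two displayed formulas: the exponents change from $x^{n}$ to $x^{(n)}$, not just by a constant).

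Your alternative ``two-dimensional cycle lemma'' route has an additional concrete difficulty you should not pass over: in one dimension, Pollak's argument works because the cyclic group $\mathbb{Z}/(a+bn)$ acts freely on preference sequences and each orbit contains exactly $a$ parking functions, producing the single-term prefactor $a$. The three-term prefactor $(st+tbq+scp)$ is not of the form (number of good elements per orbit) for any evident product group action on an $(s+ap+bq)\times(t+cp+dq)$ torus, because shifting $\bsy{a}$ changes which lattice paths can bound $\bsy{b}$ and vice versa --- the two coordinates are coupled through $b$ and $c$ exactly where the one-dimensional argument relies on independence. So ``compute by symmetry the fraction of configurations'' is not a step that can be asserted; making it work is the whole problem. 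To turn your proposal into a proof you would need to either reproduce the Gon\v{c}arov-polynomial computation of \cite{LY16} in full, or supply and prove the bivariate Abel identity directly.
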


\begin{remark} 
We explain  \cref{eq:in-goncarov,eq:goncarov} for the case $pq=0$. When $p=0$ and $q>0$, we have $(\bsy{a}, \bsy{b})=(\bsy{a}, \emptyset)$, where $\bsy{a}$ is a $\bsy{u_1}$-vector parking function associated to the vector $\bsy{u_1}=(u_0,\ldots,u_{q-1})$ with $u_i=s+ai$, and the formulas reduce to those in \cref{Theorem:classicalUPF}, with the convention that $x^{(-1)}:=1/(x-1)$. The $p>0$ and $q=0$ case is symmetric.  Finally, when $p=q=0$, both formulas equal $1$, counting the unique parking function $(\bsy{a},\bsy{b})=(\emptyset, \emptyset)$. 
\end{remark}

\section{Prime vector parking functions} \label{section:3} 

Recall that one of the characterizations of a prime classical parking function is the requirement that the weak order statistic inequalities are made strict, as per \cref{def:classical prime}. We extend this notion to define prime $\bsy{u}$-parking functions, by making the inequalities in Ineq. \eqref{eq:u inequalities} strict.

\begin{definition} \label{def:prime vector pf}
Let  $\bsy{u}=(u_0,u_1,\ldots,u_{n-1})$ be a weakly increasing vector of nonnegative integers.
A $\bsy{u}$-parking function is \emph{prime} if 
\begin{align}\label{eq:prime upf def}
&\#\{j: \ a_j <  u_i\} > i+1 \; \text{ for } \; i =0,1,\dots,n-2\,.
\end{align}
In particular, if $n=1$, any $\bsy{u}$-parking function is prime.  
\end{definition}

Denote the set of prime $\bsy{u}$-parking functions by $\PPF_n(\bsy{u})$ and the set of increasing ones by $\IPPF_n(\bsy{u})$.  
Analogous to \cref{lem:classical prime equivalent}, 
there is an equivalent description for prime vector parking functions.  

\begin{theorem} 
Let  $n \geq 2$, $\bsy{u}=(u_0, u_1, \dots, u_{n-1})$, and $\bsy{u_1}=(u_0, u_1, \dots, u_{n-2})$. For $\bsy{a}\in\mathbb{N}^n$, $\bsy{a}\in \PPF_n(\bsy{u})$ if and only if removing any entry less than $u_0$ from $\bsy{a}$ yields a $\bsy{u_1}$-parking function of length $n-1$.
\end{theorem}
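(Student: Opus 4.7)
The plan is to adapt the classical argument of \cref{lem:classical prime equivalent} almost verbatim, replacing the role of the value $0$ by any entry strictly less than $u_0$. The whole proof is bookkeeping on the counts $N_i(\bsy{a}) := \#\{j : a_j < u_i\}$, using the elementary observation that if $a_k < u_0$, then $a_k < u_i$ for every $i \geq 0$ (because $\bsy{u}$ is weakly increasing), so removing such an entry from $\bsy{a}$ decreases each $N_i$ by exactly one.

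For the forward direction, assume $\bsy{a} \in \PPF_n(\bsy{u})$. Specializing \cref{eq:prime upf def} to $i = 0$ gives $N_0(\bsy{a}) > 1$, so $\bsy{a}$ has at least two entries less than $u_0$; in particular such entries exist. Pick any one of them, call it $a_k$, and let $\bsy{a'}$ be the length-$(n-1)$ sequence obtained by deleting $a_k$. By the observation above, for $i = 0,\ldots,n-2$,
\[
N_i(\bsy{a'}) \;=\; N_i(\bsy{a}) - 1 \;\geq\; (i+2) - 1 \;=\; i+1,
\]
which is exactly the $\bsy{u_1}$-parking function condition of \cref{vectorpark}.

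For the converse, assume $\bsy{a}$ has at least one entry less than $u_0$ and that every such removal produces a $\bsy{u_1}$-parking function of length $n-1$. Fix any entry $a_k < u_0$ and let $\bsy{a'}$ be the resulting $\bsy{u_1}$-parking function. Using the same observation in reverse, for each $i = 0,\ldots,n-2$,
\[
N_i(\bsy{a}) \;=\; N_i(\bsy{a'}) + 1 \;\geq\; (i+1) + 1 \;>\; i+1,
\]
which gives \cref{eq:prime upf def}. To conclude $\bsy{a}\in\PPF_n(\bsy{u})$, one still must check that $\bsy{a}$ is itself a $\bsy{u}$-parking function; the prime inequalities just established imply the $\bsy{u}$-parking function inequalities for $i = 0,\ldots,n-2$, and the only remaining index is $i = n-1$. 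This boundary case is handled by observing that $N_{n-2}(\bsy{a'}) \geq n-1$ forces all $n-1$ entries of $\bsy{a'}$ to be less than $u_{n-2} \leq u_{n-1}$, while $a_k < u_0 \leq u_{n-1}$, so $N_{n-1}(\bsy{a}) = n$.

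I do not expect a real obstacle: the argument is the same translation between strict and weak inequalities under a removal operation that underlies \cref{lem:classical prime equivalent}. The only mildly subtle point is the top boundary index $i = n-1$ in the converse direction, which is not directly controlled by the $\bsy{u_1}$-parking function hypothesis and must be recovered from monotonicity of $\bsy{u}$ together with the condition on $a_k$; everything else reduces to the identity $N_i(\bsy{a'}) = N_i(\bsy{a}) - 1$.
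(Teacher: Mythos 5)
Your proof is correct and follows essentially the same counting argument as the paper, which establishes $N_i(\bsy{a'}) = N_i(\bsy{a}) - 1$ for the forward direction and simply says ``reversing the argument'' for the converse. Your explicit treatment of the boundary index $i = n-1$ in the converse (needed to confirm $\bsy{a}$ is genuinely a $\bsy{u}$-parking function, as \cref{def:prime vector pf} requires) is a detail the paper leaves implicit, and it is handled correctly.
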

\begin{proof}
    Suppose $\bsy{a}\in\PPF_n(\bsy{u})$. Since  
    $\#\{j: \ a_j < u_0\} > 1$, there is an entry of $\bsy{a}$ less than $u_0$. Let $\bsy{a'}=(a_0',\ldots,a_{n-1}')$ be any sequence obtained by removing such an entry. For $i=0,\dots,n-2$, we have $\#\{j:a_j'<u_i\}=\#\{j:a_j<u_i\}-1\geq i+1$, so $\bsy{a'}\in PF_{n-1}(\bsy{u_1})$.
    Reversing the argument gives the other direction. 
\end{proof}

Comparing \cref{vectorpark}  and \cref{def:prime vector pf}, we get another characterization of prime $\bsy{u}$-parking functions.
\begin{proposition}\label{prop:u prime is u'} 
Let $n \geq 1$, $\bsy{u}=(u_0,u_1,\ldots,u_{n-1})$, and $\bsy{u}'=(u_0, u_0, u_1,\dots, u_{n-2})$. Then a vector $\bsy{a}$ is a prime $\bsy{u}$-parking function if and only if it is a $\bsy{u}'$-parking function.
\end{proposition}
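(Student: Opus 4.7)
The plan is to unroll the defining inequalities for both sides and observe they coincide after a simple reindexing. Since $\bsy{u}' = (u'_0, u'_1, \ldots, u'_{n-1})$ with $u'_0 = u_0$ and $u'_i = u_{i-1}$ for $i \geq 1$, and since $\bsy{u}'$ is weakly increasing (because $u_0 \leq u_0 \leq u_1 \leq \cdots \leq u_{n-2}$), \cref{vectorpark} applies to $\bsy{u}'$.

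First I would rewrite the condition that $\bsy{a}$ is a $\bsy{u}'$-parking function. By \cref{vectorpark}, this means $\#\{j: a_j < u'_i\} \geq i+1$ for $i = 0, \ldots, n-1$. The $i=0$ inequality reads $\#\{j: a_j < u_0\} \geq 1$, and for $i \geq 1$ the inequality becomes $\#\{j: a_j < u_{i-1}\} \geq i+1$. Setting $k = i-1$, the $i \geq 1$ inequalities become
\[
\#\{j : a_j < u_k\} \geq k+2 \qquad \text{for } k = 0, \ldots, n-2,
\]
and these imply the $i=0$ condition (taking $k=0$). Hence $\bsy{a} \in \PF_n(\bsy{u}')$ is equivalent to the single system of $n-1$ strict inequalities above.

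Next I would verify that this same system characterizes prime $\bsy{u}$-parking functions. The prime condition from \cref{def:prime vector pf} is precisely $\#\{j: a_j < u_i\} > i+1$ for $i = 0, \ldots, n-2$, i.e., $\#\{j: a_j < u_i\} \geq i+2$ for $i = 0, \ldots, n-2$. The underlying requirement that $\bsy{a}$ also lie in $\PF_n(\bsy{u})$ is redundant here: the strict inequality at $i$ already gives the weak $\bsy{u}$-parking inequality for $i = 0, \ldots, n-2$, and at $i = n-1$ the weak inequality $\#\{j:a_j<u_{n-1}\}\geq n$ follows from the $i=n-2$ strict inequality combined with monotonicity $u_{n-2} \leq u_{n-1}$, which forces $\{j: a_j < u_{n-2}\} \subseteq \{j: a_j < u_{n-1}\}$.

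Comparing the two characterizations directly shows $\bsy{a} \in \PPF_n(\bsy{u})$ if and only if $\bsy{a} \in \PF_n(\bsy{u}')$. The edge case $n=1$ is immediate since $\bsy{u}' = (u_0) = \bsy{u}$ and every $\bsy{u}$-parking function of length $1$ is declared prime. No step is genuinely difficult; the only subtlety is the bookkeeping check that inserting a duplicate $u_0$ at the front of $\bsy{u}$ and shifting the remaining entries exactly enacts the ``strictification'' of the inequality system.
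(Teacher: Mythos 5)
Your proof is correct and follows essentially the same route as the paper's: rewrite the strict prime inequalities as $\#\{j: a_j < u_i\} \geq i+2$ and reindex to recover the defining system for $\bsy{u}'$. You are in fact slightly more careful than the paper, explicitly checking that the $i=0$ inequality for $\bsy{u}'$ and the $i=n-1$ weak inequality for $\bsy{u}$ are both implied by the reindexed system, but the core argument is identical.
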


\begin{proof}
It is obvious for $n=1$. For $n \geq 2$, 
rewriting Ineq. \eqref{eq:prime upf def} as $\#\{j: \ a_j <  u_{i}\} \geq i+2$ for all $i \in \{0,1,\dots,n-2\}$,
we reindex to obtain $\#\{j: \ a_j <  u_{i-1}\} \geq i+1$ for all $i\in \{1,\ldots,n-1\}$, which coincides with the set of inequalities defining a $\bsy{u}'$-parking function.
\end{proof}

From the above lemma, there is an immediate equivalence between prime $\bsy{u}$-parking functions and $\bsy{u'}$-parking functions, as well as for their increasing counterparts:
\begin{align*}
\PPF_n(\bsy{u})&\longleftrightarrow \PF_n(\bsy{u'})\\
\IPPF_n(\bsy{u})&\longleftrightarrow \IPF_n(\bsy{u'})
\end{align*}
where 
\[
\bsy{u}=(u_0, u_1, \ldots,u_{n-1}),  \qquad   \bsy{u'}=(u_0,u_0,u_1, \ldots,u_{n-2}).
\]

\subsection{Decomposition into prime components} 
We justify the definition of prime vector parking functions by giving an explicit description of how a 
$\bsy{u}$-parking function decomposes into prime components. 

Recall that increasing $\bsy{u}$-vector parking functions can be represented by lattice paths with right boundary $\bsy{u}$, and $\bsy{u}$-vector parking functions are obtained by labeling the vertical edges of the lattice path. We first examine the lattice paths that correspond to  prime $\bsy{u}$-vector parking functions; we refer to them as \emph{$\bsy{u}$-prime lattice paths}, or lattice paths that are \emph{prime with respect to right boundary $\bsy{u}$}. Define the set of points $\mathcal{I}_{\bsy{u}}=\{(0,0),(u_0,1),(u_1,2),\ldots,(u_{n-1},n)\}$. The $\bsy{u}$-prime lattice paths in $\mathcal{L}(u_{n-1}, n)$ are precisely those paths that contain points in $\mathcal{I}_{\bsy{u}}$ only at the beginning and ending vertices (namely, $\mathcal{I}_{\bsy{u}}\cap P= \{(0,0), (u_{n-1},n)\}$). In particular, the $\bsy{u}$-prime components of a given lattice path $P\in\Lc(u_{n-1},n)$ with the right boundary $\bsy{u}$ are associated to the portions of $P$ that lie between the points in the set $\mathcal{I}_{\bsy{u}}\cap P$.

Assume $\mathcal{I}_{\bsy{u}}\cap P=\{(0,0),(u_{i_1},i_1+1),\ldots,(u_{i_k},i_k+1)\}$ where $0< i_1< i_2< \dots < i_k=u_{n-1}$ for some $k\geq 1$ is the set of points in $\mathcal{I}_{\bsy{u}}$ on the path $P$. Let $P_1$ be the portion of $P$ from $(0,0)$ to $(u_{i_1}, i_1+1)$ and $P_j$ be the portion of $P$ from $(u_{i_{j-1}}, i_{j-1}+1)$ to $(u_{i_{j}}, i_{j}+1)$ for $j=2, \dots, k$. Define $\bsy{u^{(1)}}=(u_0, u_1,\dots,u_{i_1})$, and for $2\leq j\leq k$, define
  \[
\bsy{u^{(j)}} = ( u_{i_{j-1}+1}- u_{i_{j-1}}, u_{i_{j-1}+2}-u_{i_{j-1}}, \dots, u_{i_j}-u_{i_{j-1}}). 
\]
Then $P=P_1\oplus P_2 \oplus \cdots\oplus P_{k}$ is a concatenation of lattice paths such that $P_j$ is a $\bsy{u^{(j)}}$-prime lattice path for $1\leq j\leq k$. Indeed, $P_1\in\Lc(u_{i_1}, i_1+1)$ is prime with respect to the right boundary $\bsy{u^{(1)}}$, and, for  $j=2, \dots, k$, viewing the point $(u_{i_{j-1}}, i_{j-1}+1)$ as the origin $(0,0)$, we confirm that 
$P_j\in\Lc(u_{i_j}-u_{i_{j-1}}, i_j-i_{j-1})$ is prime with respect to the right boundary
$\bsy{u^{(j)}}$.

Now let $\bsy{i}$ be an increasing $\bsy{u}$-parking function with corresponding lattice path (with right boundary  $\bsy{u}$) $P$. Then $\bsy{i}$ can be decomposed as a direct sum of prime increasing vector parking functions  $\bsy{i} = \bsy{i^{(1)}} \oplus \bsy{i^{(2)}} \oplus \cdots \oplus \bsy{i^{(k)}}$ where $\bsy{i^{(j)}}\in\IPPF(\bsy{u^{(j)}})$ is the weakly increasing sequence corresponding to the lattice path $P_j$, for $1\leq j\leq k$. See \cref{ex:u decompose}. 

\begin{example}\label{ex:u decompose}
    Let $\bsy{u}=(1,2, 4, 5, 7,8)$ with $L_{\bsy{u}}$ drawn in blue, and let $\bsy{i}=(0,0, 3, 3, 4, 7) \in \IPF(\bsy{u})$ with $L_{\bsy{i}}$ drawn in red. We mark the points in $\mathcal{I}_{\bsy{u}}$ with (solid or empty) circles to show the prime decomposition of $\bsy{i}$: The prime components correspond to portions of the path separated by the (solid blue) points in the set $\mathcal{I}_{\bsy{u}}\cap L_{\bsy{i}}=\{(0,0),(u_1,2),(u_4,5),(u_5,6)\}$.   For this example, ignore the sequence $\bsy{a}$ and the labels on the lattice paths.

\begin{center}
\begin{tikzpicture}[scale=0.6]

\draw[step=1cm, thick, dotted] (0,0) grid (8,6);
\foreach \i\u in {0/0,1/1,2/2,3/4,4/5,5/7,6/8}
{
\draw[blue,very thick] (\u,\i) circle (5pt);
}

\draw[red,very thick]  (0,0)--(0,2)--(2,2);
\draw[red,very thick, yshift=0.03cm] (2,2)--(3,2);
\draw[red,very thick] (3,2)--(3,4)--(4,4)--(4,5)--(7,5)--(7,6)--(8,6);

\node[purple] at (-.3, .5) {\tiny{2}}; 
\node[purple] at (-.3, 1.5) {\tiny {4}}; 
\node[purple] at (2.7, 2.5) {\tiny {1}}; 
\node[purple] at (2.7, 3.5) {\tiny {5}}; 
\node[purple] at (3.7, 4.5) {\tiny {3}}; 
\node[purple] at (6.7, 5.5) {\tiny {0}}; 

\draw[blue, very thick] (0,0)--(1,0)--(1,1)--(2,1)--(2,2);
\draw[blue, very thick, yshift=-0.03cm] (2,2)--(3,2);
\draw[blue, very thick] (3,2)--(4,2)--(4,3)--(5,3)--(5,4)--(7,4)--(7,5)--(8,5)--(8,6);

\foreach \i\u in {0/0,2/2,5/7,6/8}
{
\filldraw[blue,very thick] (\u,\i) circle (5pt);
}

\node at (10,3) {$=$};
\node at (14.3,3) {$\oplus$};
\node at (21.5,3) {$\oplus$};


\begin{scope}[xshift=11cm,yshift=2cm]
\draw[step=1cm, thick, dotted] (0,0) grid (2,2);

\foreach \i\u in {0/0,1/1,2/2}
{
\draw[blue,very thick] (\u,\i) circle (5pt);
}

\draw[blue, very thick] (0,0)--(1,0)--(1,1)--(2,1)--(2,2); 
\draw[blue, very thick] (1,0)--(1,1) (2,1)--(2,2); 
\draw[red, very thick] (0,0)--(0,2)--(2,2);
\foreach \i\u in {0/0,2/2}
{
\filldraw[blue,very thick] (\u,\i) circle (5pt);
}

\node[purple] at (-.3, .5) {\tiny{0}}; 
\node[purple] at (-.3, 1.5) {\tiny{1}}; 
\end{scope}


\begin{scope}[xshift=15cm,yshift=1.5cm]
\draw[step=1cm, thick, dotted] (0,0) grid (5,3);

\foreach \i\u in {0/0,1/2,2/3,3/5}
{
\draw[blue,very thick] (\u,\i) circle (5pt);
}

\draw[blue, very thick, yshift=-0.03cm] (0,0)--(1,0);
\draw[blue, very thick] (1,0)--(2,0)--(2,1)--(3,1)--(3,2)--(5,2)--(5,3);
\draw[blue, very thick] (2,0)--(2,1) (3,1)--(3,2) (5,2)--(5,3);
\draw[red, very thick, yshift=0.03cm] (0,0)--(1,0);
\draw[red, very thick] (1,0)--(1,2)--(2,2)--(2,3)--(5,3);
\foreach \i\u in {0/0,3/5}
{
\filldraw[blue,very thick] (\u,\i) circle (5pt);
}

\node[purple] at (0.7, .5) {\tiny{0}}; 
\node[purple] at (0.7, 1.5) {\tiny{2}}; 
\node[purple] at (1.7, 2.5) {\tiny{1}}; 
\end{scope}


\begin{scope}[xshift=22.5cm,yshift=2.5cm]
\draw[step=1cm, thick, dotted] (0,0) grid (1,1);

\foreach \i\u in {0/0,1/1}
{
\draw[blue,very thick] (\u,\i) circle (5pt);
}

\draw[red, very thick] (0,0)--(0,1)--(1,1);
\draw[blue, very thick] (0,0)--(1,0)--(1,1);
\draw[blue, very thick] (1,0)--(1,1);
\foreach \i\u in {0/0,1/1}
{
\filldraw[blue,very thick] (\u,\i) circle (5pt);
}

\node[purple] at (-.3, .5) {\tiny{0}}; 
\end{scope}

\begin{scope}[yshift=1cm]
\node[blue] at (3,-2) {$\bsy{u}=(1,2, 4,5, 7, 8)$};
\node[red] at (3,-3) {$\bsy{i}=(0,0,3,3,4,7)$};
\node[purple] at (3, -4) {$\bsy{a}=(7,3,0,4,0,3)$}; 

\node[blue,xshift=6.5cm] at (1,-2) {$\bsy{u^{(1)}}=(1,2)$};
\node[red,xshift=6.5cm] at (1,-3) {$\bsy{i^{(1)}}=(0,0)$};
\node[purple,xshift=6.5cm] at (1,-4) {$\bsy{a^{(1)}}=(0,0)$};

\node[blue,xshift=10cm] at (1,-2) {$\bsy{u^{(2)}}=(2,3,5)$};
\node[red,xshift=10cm] at (1,-3) {$\bsy{i^{(2)}}=(1,1,2)$};
\node[purple,xshift=10cm] at (1,-4) {$\bsy{a^{(2)}}=(1,2,1)$};

\node[blue,xshift=13.5cm] at (.5,-2) {$\bsy{u^{(3)}}=(1)$};
\node[red,xshift=13.5cm] at (.5,-3) {$\bsy{i^{(3)}}=(0)$};
\node[purple,xshift=13.5cm] at (.5,-4) {$\bsy{a^{(3)}}=(0)$};
\end{scope}

\end{tikzpicture}
\end{center} 
\end{example}

General $\bsy{u}$-parking functions are obtained by labeling the vertical edges of a lattice path $P \in \Lc(u_{n-1}, n)$ with the right boundary $\bsy{u}$, with the property that labels along each of the vertical edges with the same $x$-coordinates increase from bottom to top. 
Similar to the classical case, any $\bsy{u}$-vector parking function can be viewed as a direct (shuffle) sum of prime vector parking functions  by the following steps. 

Let $\bsy{a}$ be a $\bsy{u}$-parking function with corresponding labeled lattice path  $P$. Ignoring the labeling, decompose $P$ as the concatenation $P_1 \oplus P_2 \oplus \cdots \oplus P_k$ as described above. For $1\leq i\leq k$, let $B_i$ be the set of labels on $P_i$.  Replace the labels in $B_i$ by $\{0, 1, \dots, |B_i|-1\}$, following  the same numerical order, to obtain a lattice path $P_i'$ whose vertical edges are labeled by {$\mathbb{N}_{|B_i|}$}  and let $\bsy{a^{(i)}}$ be the $\bsy{u^{(i)}}$-vector parking function corresponding to the labeled lattice path $P_i'$. 
Then we write 
\[
\bsy{a}= \bsy{a^{(1)}} \oplus \bsy{a^{(2)}} \oplus  \cdots \oplus \bsy{a^{(k)}}. 
\]
Another way to obtain $\bsy{a^{(i)}}$ is to let $\bsy{b^{(i)}}$ be the subsequence of $\bsy{a}$ restricted to the positions in $B_i$ and take $\bsy{a^{(i)}}=\bsy{b^{(i)}}-u_{p_i-1}$, where $p_i=\sum_{j < i} |B_j|$.

\begin{example}\label{ex:u PF decompose}
    For $\bsy{u}=(1,2,4,5,7,8)$ and $\bsy{a}=(7,3,0,4,0,3)\in\PF(\bsy{u})$, 
    the increasing rearrangement of $\bsy{a}$ is $\bsy{i}$ in \cref{ex:u decompose}.
    The labels on the vertical edges of $L_{\bsy{a}} = L_{\bsy{i}}$ are $(2,4,1,5,3,0)$ from bottom to top, so that $B_1=\{2,4\}$, $B_2=\{1,3, 5\}$ and $B_3=\{0\}$. Therefore $\bsy{b^{(1)}}=(0,0)$, 
    $\bsy{b^{(2)}}=(3,4,3)$ and $\bsy{b^{(3)}}=(7)$. It follows that  $\bsy{a^{(1)}}=\bsy{b^{(1)}}=  (0,0)$,
    $\bsy{a^{(2)}} = \bsy{b^{(2)}}-u_1=(1,2,1)$ and $\bsy{a^{(3)}}= \bsy{b^{(3)}}-u_{4}=(0) $. 
    The labeling on the lattice path $L_{\bsy{a}}$ is indicated in the figure in \cref{ex:u decompose}. 

\end{example}

\subsection{Enumeration of prime $\bsy{u}$-parking functions when $\bsy{u}$ is an arithmetic progression} \label{subsection:enumeration upf}

In this subsection we derive explicit formulas for  prime $\bsy{u}$-parking functions when $\bsy{u}=(u_0, \ldots, u_{n-1})$  is an arithmetic sequence given by $u_i=a+bi$. 
First we consider  $\IPPF_n(\bsy{u})$, the set of increasing prime $\bsy{u}$-parking functions.  
\begin{theorem}\label{Theorem:IncreasingPrimeAP}
    Fix $n\geq 1$ and let $a, b \in \mathbb{N}$. 
    The number of increasing prime $\bsy{u}$-parking functions for $\bsy{u}=(u_0,\ldots,u_{n-1})$ given by $u_i=a+bi$ is
    \begin{equation}\label{eq:IPPU}
    \#\IPPF_n(\bsy{u})=\frac{a-b}{n}\binom{a+(b+1)(n-1)}{n-1} + \frac{b}{n}\binom{(b+1)(n-1)}{n-1}.
    \end{equation}
\end{theorem}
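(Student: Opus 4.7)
The plan is to combine \cref{prop:u prime is u'} with the arithmetic-progression formula of \cref{Theorem:classicalUPF}(2) via a shift argument and the hockey-stick identity, which reduces the prime count to a sum of standard increasing parking function counts. The case $n=1$ is immediate, since every sequence $(c_0)$ with $c_0<a$ is a prime $\bsy{u}$-parking function, giving $\#\IPPF_1(\bsy{u})=a$, which agrees with the formula. So assume $n\geq 2$.

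By \cref{prop:u prime is u'}, $\#\IPPF_n(\bsy{u}) = \#\IPF_n(\bsy{u'})$ where $\bsy{u'} = (a,a,a+b,\ldots,a+(n-2)b)$. An element of $\IPF_n(\bsy{u'})$ is a weakly increasing sequence $0 \leq c_0 \leq c_1 \leq \cdots \leq c_{n-1}$ with $c_1 < a$ and $c_i < a+(i-1)b$ for $2 \leq i \leq n-1$ (the condition $c_0<a$ is implied by $c_0 \leq c_1 < a$). I partition by the value of $c_0 \in \{0,1,\ldots,a-1\}$. For fixed $c_0$, the shift $d_j := c_{j+1}-c_0$ for $j=0,\ldots,n-2$ yields a weakly increasing sequence of nonnegative integers with $d_j < (a-c_0)+jb$, i.e., an element of $\IPF_{n-1}(\bsy{w})$ for the arithmetic progression $\bsy{w}_j = a'+jb$ with first term $a' := a-c_0 \geq 1$ and common difference $b$. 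By \cref{Theorem:classicalUPF}(2), rewritten using $\binom{m}{k} = \frac{m}{k}\binom{m-1}{k-1}$, this count equals $\frac{a'}{n-1}\binom{a'+M}{n-2}$ where $M := (n-1)(b+1)-1$. Summing over $a' \in \{1,\ldots,a\}$ gives
\[
\#\IPF_n(\bsy{u'}) = \frac{1}{n-1}\sum_{a'=1}^{a} a'\binom{a'+M}{n-2}.
\]

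To evaluate the sum, I use $(m+1)\binom{m}{k} = (k+1)\binom{m+1}{k+1}$ to rewrite $a'\binom{a'+M}{n-2} = (n-1)\binom{a'+M+1}{n-1} - (M+1)\binom{a'+M}{n-2}$, then apply the hockey-stick identity $\sum_{j=r}^{s}\binom{j}{r} = \binom{s+1}{r+1}$ to each piece. Setting $A := a+(n-1)(b+1)$ and $B := (n-1)(b+1)$, and noting $M+1=B$ and $\frac{M+1}{n-1}=b+1$, this yields
\[
\#\IPF_n(\bsy{u'}) = \binom{A+1}{n} - \binom{B+1}{n} - (b+1)\left[\binom{A}{n-1}-\binom{B}{n-1}\right].
\]
Pascal's identity on $\binom{A+1}{n}$ and $\binom{B+1}{n}$ collapses the right-hand side to $\bigl[\binom{A}{n}-b\binom{A}{n-1}\bigr] - \bigl[\binom{B}{n}-b\binom{B}{n-1}\bigr]$. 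Using $\binom{X}{n} = \frac{X-n+1}{n}\binom{X}{n-1}$, one computes $\binom{X}{n}-b\binom{X}{n-1} = \frac{X-n+1-bn}{n}\binom{X}{n-1}$; since $A-n+1-bn = a-b$ and $B-n+1-bn = -b$, the expression reduces to $\frac{a-b}{n}\binom{A}{n-1} + \frac{b}{n}\binom{B}{n-1}$, matching the claim. The main obstacle is the careful binomial bookkeeping in the last two steps; the structural reduction via \cref{prop:u prime is u'} and the partition on $c_0$ is clean.
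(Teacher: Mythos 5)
Your proposal is correct and follows essentially the same route as the paper: reduce to $\IPF_n(\bsy{u'})$ via \cref{prop:u prime is u'}, condition on the smallest entry, apply \cref{Theorem:classicalUPF}(2) to the shifted length-$(n-1)$ sequence, and sum with the hockey-stick identity. The only differences are cosmetic (you parametrize by $a'=a-c_0$ and defer the binomial rewriting until after summing, whereas the paper rewrites each summand first), and your final simplification via Pascal's identity lands on the same closed form.
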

When $a=b=1$, \cref{Theorem:IncreasingPrimeAP} recovers the enumeration for the classical case $\IPPF(\bsy{u})=\IPPF(n)=C_{n-1}$. 
\begin{proof}
Recall that $\bsy{\alpha}$ is an increasing prime $\bsy{u}$-parking function if and only if $\bsy{\alpha}$ is an increasing $\bsy{u}'$-parking function for $\bsy{u}'=(a,a,a+b,a+2b,\ldots,a+(n-2)b)$. For each $i\in\{0,1,\ldots,a-1\}$, by \cref{Theorem:classicalUPF}, the number of $\bsy{\alpha}=(a_0,a_1,\ldots,a_{n-1})\in \IPPF_n(\bsy{u})$ with $a_0=i$ is
\begin{eqnarray*}
m_{n,i}&:= &\frac{a-i}{a+(n-1)(b+1)-i}\binom{a+(n-1)(b+1)-i}{n-1} \\
  & = &\left( 1-\frac{K}{K+a-i}\right) \binom{K+a-i}{n-1}     \\
  & =  &  \binom{K+a-i}{n-1} - \frac{K}{n-1} \binom{K+a-i-1}{n-2} 
\end{eqnarray*}
where $K = (n-1)(b+1)$.
Next, we have
\begin{align*}
    &\sum_{i=0}^{a-1}\binom{K+a-i}{n-1} - \frac{K}{n-1} \binom{K+a-i-1}{n-2}\\
    &=\sum_{i=K+1}^{K+a} \binom{i}{n-1}-\frac{K}{n-1}\sum_{i=K}^{K+a-1}\binom{i}{n-2} \quad \text{by reindexing}\\
    &=\left(\sum_{i=0}^{K+a}\binom{i}{n-1}-\sum_{i=0}^{K}\binom{i}{n-1}\right)-\frac{K}{n-1}\left(\sum_{i=0}^{K+a-1}\binom{i}{n-2}-\sum_{i=0}^{K-1}\binom{i}{n-2}\right)\\
    &=\binom{K+a+1}{n}-\binom{K+1}{n}-\frac{K}{n-1}\left(\binom{K+a}{n-1}-\binom{K}{n-1}\right)\quad \text{by the Hockey-stick identity}\\
    &=\binom{K+a+1}{n}-\frac{K}{n-1}\binom{K+a}{n-1}+\frac{K}{n-1}\binom{K}{n-1}-\binom{K+1}{n}.
\end{align*}
The above can be simplified using 
\begin{align*}
    \binom{K+a+1}{n}-\frac{K}{n-1}\binom{K+a}{n-1}
    =\frac{a-b}{n}\binom{K+a}{n-1}
    \intertext{and}
    \frac{K}{n-1}\binom{K}{n-1}-\binom{K+1}{n}
   =\frac{b}{n}\binom{K}{n-1}.
\end{align*}
Combining the above, we get
\begin{align*}
    \sum_{i=0}^{a-1}\binom{K+a-i}{n-1} - \frac{K}{n-1} \binom{K+a-i-1}{n-2}&=\frac{a-b}{n}\binom{K+a}{n-1}+\frac{b}{n}\binom{K}{n-1} 
\end{align*}
as desired. 
\end{proof}

Next we enumerate $\PPF_n(\bsy{u})$, the set of all prime $\bsy{u}$-parking functions when $\bsy{u}$ is given by an  arithmetic progression.  

\begin{theorem}\label{Theorem:primeAP}
    Fix $n\geq 1$ and let $a, b \in \mathbb{N}$. 
    The number of  prime $\bsy{u}$-parking functions for $\bsy{u}=(u_0,\ldots,u_{n-1})$ given by $u_i=a+bi$ is
    \begin{equation}\label{eq:PPF(u)}
    \# \PPF_n(\bsy{u})=(a-b)\big[ a+(n-1)b \big]^{n-1} + b^n (n-1)^{n-1}. 
    \end{equation}
\end{theorem}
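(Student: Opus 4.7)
The plan is to reduce $\#\PPF_n(\bsy{u})$ to an ordinary vector-parking-function count via \cref{prop:u prime is u'}, stratify the resulting set by the number of entries below $a$, and apply \cref{Theorem:classicalUPF}(1) to evaluate each stratum. The case $n=1$ is handled separately: the claimed formula evaluates to $(a-b) + b = a$ (using the convention $0^0 = 1$), matching $\#\PPF_1(\bsy{u}) = \#\PF_1(\bsy{u}) = a$. For $n \geq 2$, \cref{prop:u prime is u'} identifies $\PPF_n(\bsy{u})$ with $\PF_n(\bsy{u}')$, where $\bsy{u}' = (a, a, a+b, a+2b, \ldots, a+(n-2)b)$, so the task becomes counting $\bsy{u}'$-parking functions.

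For $\bsy{a} \in \PF_n(\bsy{u}')$, I would stratify by $m := |\{j : a_j < a\}|$. The constraint $a_{(1)} < u_1' = a$ forces $m \geq 2$. For fixed $m \in \{2, \ldots, n\}$ the count factors as: $\binom{n}{m}$ choices for the positions of the small entries, $a^m$ unconstrained choices in $\{0, \ldots, a-1\}$ for their values (the first $m$ order-statistic inequalities are automatic since $u_i' \geq a$ for all $i$), and a count for the remaining $n - m$ entries. These latter entries lie in $[a, \infty)$ and, after subtracting $a$, must form a $\bsy{v}$-parking function, where $\bsy{v} = ((m-1)b, mb, \ldots, (n-2)b)$ is the arithmetic progression of length $n - m$ with start $(m-1)b$ and common difference $b$. \cref{Theorem:classicalUPF}(1) then gives $\#\PF_{n-m}(\bsy{v}) = (m-1)\, b^{n-m} (n-1)^{n-m-1}$ for $n - m \geq 1$, while the $m = n$ stratum contributes $a^n$. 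Summing produces
\[
\#\PPF_n(\bsy{u}) = a^n + \sum_{m=2}^{n-1} \binom{n}{m} (m-1)\, a^m\, b^{n-m}\, (n-1)^{n-m-1}.
\]

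The final and most technical step is to match this expression with the claimed closed form. The plan is to expand $(a + (n-1)b)^{n-1}$ by the binomial theorem, multiply by $(a-b)$, reindex to align the two sums, and compare the coefficient of $a^{n-j} b^j (n-1)^{j-1}$ on both sides. This reduces to the identity $(n-1)\binom{n-1}{j} - \binom{n-1}{j-1} = \binom{n}{j}(n-j-1)$, after which the $j = n-1$ term vanishes (thanks to the factor $n-j-1$) and the extra summand $b^n(n-1)^{n-1}$ in the target exactly cancels the $j = n$ boundary term generated by the $-b$ factor. I expect the binomial bookkeeping to be the main obstacle, though it is routine once the coefficient-matching is set up. As a sanity check, in the degenerate case $b = 0$ the sum collapses to the $m = n$ contribution $a^n$ (the remaining strata vanish, consistent with $v_0 = 0$ producing no $\bsy{v}$-parking functions), matching the target's value $a \cdot a^{n-1} + 0 = a^n$.
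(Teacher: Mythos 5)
Your proposal is correct and follows essentially the same route as the paper: reduce to counting $\bsy{u'}$-parking functions via \cref{prop:u prime is u'}, stratify by the number of entries less than $a$, and apply \cref{Theorem:classicalUPF}(1) to the shifted tail, arriving at the same sum $\sum_m \binom{n}{m}(m-1)a^m b^{n-m}(n-1)^{n-m-1}$ (the paper includes the vanishing $m=1$ term and folds $m=n$ into the sum via the convention $x^{-1}=1/x$). The only difference is the closing algebra — you match coefficients after a binomial expansion, using the identity $(n-1)\binom{n-1}{j}-\binom{n-1}{j-1}=\binom{n}{j}(n-j-1)$, which checks out, whereas the paper sums directly by splitting $k-1=k-1$ and applying the binomial theorem twice; both are routine.
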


\begin{proof}
Following \cref{prop:u prime is u'}, we count the number of $\bsy{u'}$-parking functions where $\bsy{u'}=(a,a,a+b,a+2b,\ldots,a+(n-2)b)$. For a $\bsy{u'}$-parking function $\bsy{a}$, let $\bsy{a}_1$ be the subsequence of $\bsy{a}$ consisting of entries smaller than $a$ and $\bsy{a}_2$ be the subsequence of $\bsy{a}$ consisting of entries larger than or equal to $a$. 

Let $k:=|\bsy{a}_1|$. Then, notice that $\bsy{a}_1 \in \{0, 1, \dots, a-1\}^k$ and 
$\bsy{a}_2 \in \PF(a+(k-1)b, a+kb, \dots, a+(n-2)b)$. 
Since every entry in $\bsy{a}_2$ is at least $a$, we have that $\bsy{a}_2-a$ is a $\bsy{u^{(k)}}$-parking function, where $\bsy{u^{(k)}}=((k-1)b, kb, \dots, (n-2)b)$.  Since $\bsy{u^{(k)}}=(u^{(k)}_0,\ldots,u^{(k)}_{n-k})$ is given by the arithmetic progression $u^{(k)}_i=a'+bi$ with $a'=(k-1)b$, 
the number of choices for $\bsy{a}_2$ is $\#\PF_{n-k}(\bsy{u^{(k)}}) = (k-1)b ( nb-b)^{n-k-1}$. 
Thus the number of choices for $(\bsy{a}_1,\bsy{a}_2)$ such that $\bsy{a}\in\PPF_n(\bsy{u'})$ is $a^k (k-1)b ( nb-b)^{n-k-1}$. 

As $k=|\bsy{a}_1|\geq 1$, summing over $1\leq k\leq n$, we obtain that $\#\PPF(\bsy{u})$ is equal to
\begin{eqnarray*}
  \sum_{k=1}^n  \binom{n}{k} a^k (k-1)b (nb-b)^{n-k-1} 
   & = & \frac{1}{n-1} \left( \sum_{k=1}^n \binom{n}{k} k a^k (nb-b)^{n-k}  -\sum_{k=1}^n \binom{n}{k} a^k (nb-b)^{n-k}  \right) \\ 
   & = & \frac{1}{n-1} \big(na(a+nb-b)^{n-1} - (a+nb-b)^n + (nb-b)^{n}     \big) \\
   & = &  b^n (n-1)^{n-1} + (a-b)(a+nb-b)^{n-1}.     
\end{eqnarray*}
\end{proof} 

When $a=b=1$, \cref{Theorem:primeAP} recovers the enumeration for the classical case $\PPF_n(\bsy{u})=\PPF(n)=(n-1)^{n-1}$.

\begin{remark}
For a vector $\bsy{u}=(u_0, u_1, \dots, u_{n-1})$, define the difference vector of $\bsy{u}$ by
    \[\Delta(\bsy{u}):=(u_0, u_1-u_0, u_2-u_1, \dots, u_{n-1}-u_{n-2}).\] 
    Let $\bsy{u}$ be given by $u_i=a+bi$. For  $\bsy{u'}=(u_0,u_0,u_1,\ldots,u_{n-2})$, we have $\Delta(\bsy{u'})=(a, 0, b, b, \dots, b)$. 
    Then \cref{Theorem:primeAP} could alternatively be proven by applying  \cite[Theorem 3]{Yan2000}  
    to $\Delta(\bsy{u'})$.
\end{remark}

\vanish{ 
If we want \# r-prime ones, the formula is 
\[
\sum_{k=r}^n  \binom{n}{k} a^k (k-1)b (nb-b)^{n-k-1} 
= b^n (n-1)^{n-1} + (a-b)(a+nb-b)^{n-1} - \sum_{k=1}^{r-1}  \binom{n}{k} a^k (k-1)b (nb-b)^{n-k-1}. 
\] 
} 

\section{Prime $(p,q)$-parking functions} \label{section:pq parking functions}

In this section, we propose a definition for \emph{prime $(p,q)$-parking functions}, which will serve as the $(p,q)$-analogue of prime classical  parking functions. Our motivation for this definition is to obtain a unique decomposition of a $(p,q)$-parking function into prime components. 

We always assume that at least one of $p,q$ is nonzero unless otherwise specified, but for technical reasons, we allow the case $pq = 0$ in the following definition.
\begin{definition}
\label{IPPFLemma}
    Let $p,q \ge 1$ and let $(\bsy{a},\bsy{b}) \in \mathbb{N}^p \times \mathbb{N}^q$ be a $(p,q)$-parking function with order statistics $a_{(0)}\leq \cdots\leq a_{(p-1)}$ and $b_{(0)}\leq \cdots\leq b_{(q-1)}$ for $\bsy{a}$ and $\bsy{b}$, respectively. Then $(\bsy{a},\bsy{b})$ is \emph{prime} if it satisfies the conditions
    \begin{itemize}
        \item[i)] $\#\{j:a_j<i\}>b_{(i)}$
for $1\leq i\leq q-1$, and
        \item[ii)] $\#\{j:b_j<i\}>a_{(i)}$
for $1\leq i\leq p-1$.
    \end{itemize}
    
    If $p = 0$, then we say that $(\emptyset,\bsy{b}) \in \mathbb{N}^0 \times \mathbb{N}^q$ is prime if and only if $q = 1$ and $\bsy{b} = (0)$.

    If $q = 0$, then we say that $(\bsy{a},\emptyset) \in \mathbb{N}^p \times \mathbb{N}^0$ is prime if and only if $p = 1$ and $\bsy{a} = (0)$.
\end{definition}
We let $\PPF(p,q)$ denote the set of prime $(p,q)$-parking functions, and let $\IPPF(p,q)$ denote the set of increasing prime $(p,q)$-parking functions. Note that the above inequalities imply that entries in $\bsy{a}$ and $\bsy{b}$ are bounded by $q$ and $p$, respectively. Furthermore, it follows immediately from the definition that $(\bsy{a},\bsy{b})$ is a prime $(p,q)$-parking function if and only if $(\bsy{b},\bsy{a})$ is a prime $(q,p)$-parking function. 

As in the classical case, there are a few equivalent ways of defining primeness for $(p,q)$-parking functions, which we provide in the following proposition.

\begin{proposition}\label{prop:equivalent}
  The following are equivalent.  
  \begin{enumerate}
      \item $(\bsy{a},\bsy{b})$ is a prime  $(p,q)$-parking function. 
      
      \item The lattice paths $L^\perp_{\bsy{a}}$ and $L_{\bsy{b}}$ have exactly two common points: $(0,0)$ and $(p,q)$.  

      \item If $p,q \ge 1$, removing a $0$ entry from both $\bsy{a}$ and $\bsy{b}$ yields a $(p-1, q-1)$-parking function. 

      \end{enumerate}
      \end{proposition}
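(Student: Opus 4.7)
The plan is to prove the equivalences (1) $\Leftrightarrow$ (2) and (1) $\Leftrightarrow$ (3) using \cref{IPFLemma} and a careful geometric analysis of where the paths $L^\perp_{\bsy{a}}$ and $L_{\bsy{b}}$ can meet. Throughout I adopt the boundary conventions $b_{(-1)} := 0$, $b_{(q)} := p$, and symmetrically for $\bsy{a}$, so that slices at the extreme levels can be treated uniformly.

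For (1) $\Leftrightarrow$ (2), the key geometric observation is that at horizontal level $y = r$ with $0 \le r \le q$, the path $L_{\bsy{b}}$ occupies the $x$-interval $[b_{(r-1)}, b_{(r)}]$ while $L^\perp_{\bsy{a}}$ occupies $[\#\{j : a_j < r\}, \#\{j : a_j \le r\}]$. Because $L_{\bsy{b}}$ lies weakly above $L^\perp_{\bsy{a}}$, the first interval is weakly to the left of the second at every $r$, so the two meet at level $r$ if and only if $b_{(r)} \ge \#\{j : a_j < r\}$. For $1 \le r \le q-1$ this is exactly the negation of primeness condition (i) at index $r$, and the symmetric statement for vertical slices at $x = c$ with $1 \le c \le p-1$ corresponds to (ii). Extra common points at the extreme levels $r \in \{0, q\}$ and $c \in \{0, p\}$ are ruled out by the same prime inequalities: (ii) at $c = 1$ forces $b_{(0)} = 0$, so $y = 0$ is clear; (i) at $r = 1$ forces $a_{(0)} = 0$, so $x = 0$ is clear; (i) at $r = q-1$ forces $b_{(q-1)} < p$, so $x = p$ is clear; and (ii) at $c = p-1$ forces $a_{(p-1)} < q$, so $y = q$ is clear.

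For (1) $\Leftrightarrow$ (3), when $p, q \ge 2$ the primeness inequalities (specifically (ii) at $c = 1$ and (i) at $r = 1$) force $\#\{j : a_j = 0\} \ge 1$ and $\#\{j : b_j = 0\} \ge 1$, so one may delete a $0$ from each sequence to form $(\bsy{a}', \bsy{b}')$. The shifted order statistics satisfy $a'_{(i)} = a_{(i+1)}$, $b'_{(i)} = b_{(i+1)}$, and $\#\{j : a'_j < k\} = \#\{j : a_j < k\} - 1$ for every $k \ge 1$. Substituting these identities into the inequalities of \cref{IPFLemma} applied to $(\bsy{a}', \bsy{b}')$ and reindexing $i \mapsto i-1$ yields exactly the prime inequalities for $(\bsy{a}, \bsy{b})$; the computation is reversible, so (1) and (3) are equivalent. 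The main obstacle is the degenerate cases $p = 1$ or $q = 1$, where the prime inequality sets in \cref{IPPFLemma} are partially or fully empty; these must be handled directly and by appeal to the base cases specified there. For instance, when $p = 1$ and $q \ge 2$, condition (i) forces $a_0 = 0$ and all entries of $\bsy{b}$ to equal $0$, so the two paths are $E N^q$ and $N^q E$ and meet only at the endpoints, and deletion yields the base parking function $(\emptyset, (0, \dots, 0)) \in \PF(0,q-1)$; the remaining small instances are checked analogously.
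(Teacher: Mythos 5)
Your proposal is correct in substance and, for the equivalence $(1)\Leftrightarrow(3)$, is essentially identical to the paper's argument: both force a zero entry in each sequence from the first prime inequalities, track the shifts $a'_{(i)}=a_{(i+1)}$ and $\#\{j:a'_j<k\}=\#\{j:a_j<k\}-1$, and observe that the two inequality systems transform into one another under reindexing. For $(1)\Leftrightarrow(2)$ you take a mildly different and arguably cleaner route: the paper proves the two implications separately, each by contradiction with a case analysis on where an extra intersection point could sit, whereas you establish a single biconditional --- the paths meet at level $y=r$ if and only if $b_{(r)}\ge\#\{j:a_j<r\}$, i.e.\ if and only if condition (i) fails at $r$ --- via the interval description of each path on a horizontal slice, and then dispose of the boundary levels $r\in\{0,q\}$ and slices $c\in\{0,p\}$ using the extreme prime inequalities. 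This packages both directions at once and makes explicit the boundary bookkeeping ($b_{(0)}=0$, $a_{(0)}=0$, $b_{(q-1)}<p$, $a_{(p-1)}<q$) that the paper buries inside its case analysis. The interval claim itself (that $[b_{(r-1)},b_{(r)}]$ sits weakly to the left of $[\#\{j:a_j<r\},\#\{j:a_j\le r\}]$ for a parking function) is correct and follows from \cref{IPFLemma}.

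One caution: your closing remark that ``the remaining small instances are checked analogously'' conceals the fact that $p=q=1$ is genuinely exceptional. There both inequality sets in \cref{IPPFLemma} are empty, so for instance $(\bsy{a},\bsy{b})=((0),(1))$ is vacuously prime; yet $L^\perp_{\bsy{a}}=L_{\bsy{b}}=EN$ share the point $(1,0)$, and $\bsy{b}$ contains no zero to delete, so $(1)$ holds while $(2)$ and $(3)$ fail. This is really a defect of the definition at $p=q=1$ rather than of your argument --- the paper's own proof has the identical blind spot, since its reduction to an intersection point with $0<c<p$ silently excludes the corner intersections when $p=q=1$ --- but if you claim to verify the small cases you should either flag this discrepancy or note that the intended notion of primeness in this corner case is the one given by $(2)$ and $(3)$ (which is what the enumeration in the paper actually uses).
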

\begin{proof}
    If $p=0$, then both (1) and (2) occur if and only if $q=1$. In this case, the pair $(\bsy{a},\bsy{b})=(\emptyset,(0))$ is the only prime $(0,1)$-parking function, and $(\bsy{a},\bsy{b})=(\emptyset,(0))$ is also the only pair in $\mathbb{N}^0\times\mathbb{N}^1$ whose corresponding lattice paths $L^{\perp}_{\bsy{a}},L_{\bsy{b}}\in\Lc(0,1)$ share exactly two common points. The case $q=0$ is similar. Thus (1) and (2) are equivalent for $pq=0$.
    
    We now prove the equivalences in the case $p,q \geq 1$. Let $(\bsy{a},\bsy{b}) \in \mathbb{N}^p \times \mathbb{N}^q$. 
    First assume that $(\bsy{a},\bsy{b})$ is a prime $(p,q)$-parking function. We will show that (2) holds. Assume for the sake of contradiction that $L^\perp_{\bsy{a}}$ and $L_{\bsy{b}}$ intersect at some lattice point $(c,r) \neq (0,0)$ with $c < p$ or $r < q$.  By symmetry, it suffices to consider the case where $0<c<p$. If $r = 0$, we must have $b_{(0)} \ge c > 0$. But this then implies that $\#\{ j : b_j < 1 \} = 0 \le a_{(0)}$, contradicting (ii.) of \Cref{IPPFLemma}.
    
Now assume that $r \geq 1$. We must then have $a_{(c-1)} \le r$ and $b_{(r-1)} \le c$. We examine the following two cases for $r$. 
    \begin{enumerate}
    \item[i.] If $r < q$, then we must have $a_{(c)} \ge r$ and $b_{(r)} \ge c$. Thus $\#\{ j : a_j < { r} \} \le c \le b_{(r)}$, contradicting i) of \Cref{IPPFLemma}.
    \item[ii.] If $r = q$, then we must have $a_{(c)} = \cdots = a_{(p-1)} = q$, so that $\# \{ j : b_j < p-1 \} \le q = a_{(p-1)}$, contradicting ii) of \Cref{IPPFLemma}.
    \end{enumerate} 
    Thus (1) $\Rightarrow$ (2). Now assume that $(\bsy{a},\bsy{b}) \in \mathbb{N}^p \times \mathbb{N}^q$ is a non-prime, $(p,q)$-parking function. Then without loss of generality, there exists some $0 \le i < q-1$ such that i) of \cref{IPPFLemma} doesn't hold, so that we have $\#\{j:a_j<i\}\leq b_{(i)}$. On the other hand, since $(\bsy{a},\bsy{b})$ is a parking function, we also have $b_{(i)}\leq \#\{j:a_j<i+1\}$. Thus $a_{(b_{(i)})}\geq i+1$, while $a_{(b_{(i)}-1)}<i+1$, so $L^\perp_{\bsy{a}}$ necessarily passes through the point $(b_{(i)},i+1)$. But $L_{\bsy{b}}$ also passes through $(b_{(i)},i+1)$ by construction, so (2) is not satisfied. Therefore (2) $\Rightarrow$ (1), proving the first equivalence.  

    We now prove that conditions (1) and (3) are equivalent. First note that if $(\bsy{a},\bsy{b})$ is a prime $(p,q)$-parking function, then it follows immediately from \Cref{IPPFLemma} that each of $\bsy{a}$ and $\bsy{b}$ contains a $0$ entry. Let $(\bsy{a}',\bsy{b}') \in \mathbb{N}^{p-1} \times \mathbb{N}^{q-1}$ be any pair 
    obtained by removing a $0$ from both $\bsy{a}$ and $\bsy{b}$. Then $a_{(i)}' = a_{(i+1)}$ for each $0 \le i \le p-2$, and $b_{(i)}' = b_{(i+1)}$ for each $0 \le i \le q-2$. Furthermore, note that
    \[
    \# \{ j : a_j' < i+1 \} = \# \{ j : a_j < i+1 \} - 1 \quad \text{and} \quad \# \{ j : b_j' < i+1 \} = \# \{ j : b_j < i+1 \} - 1
    \]
    for any $i \ge 0$. We then have that
\begin{align}
    \#\{j:a_j<i+1\}-1>b_{(i+1)}-1,&\ \mbox{which implies}\ \#\{j:a'_j<i+1\}\geq b_{(i+1)}=b'_{(i)}, \ \mbox{and}\nonumber\\
    \#\{j:b_j<i+1\}-1>a_{(i+1)}-1,&\ \mbox{which implies}\  \#\{j:b'_j<i+1\}\geq a_{(i+1)}=a'_{(i)},
    \label{eq:pq to U}
\end{align}
    so that $(\bsy{a}',\bsy{b}')$ is a $(p-1,q-1)$-parking function. Reversing the argument also proves that if $(\bsy{a}',\bsy{b}')$ is a $(p-1,q-1)$-parking function and $(\bsy{a},\bsy{b})$ is obtained by prepending a $0$ entry to each of $\bsy{a}'$ and $\bsy{b}'$, then the resulting pair $(\bsy{a},\bsy{b})$ is a prime $(p,q)$-parking function. Thus (1) and (3) are equivalent.
\end{proof}

From \cref{prop:equivalent}, by comparing Ineq.~\eqref{eq:pq to U} to \cref{def:2Dvectorparkingfunctions}, we obtain the following useful equivalence, illustrated in \cref{ex:pq prime}.
\begin{proposition}
    \label{prop:2vector}
Let $\bsy{U_0'}=\{ (u_{k,\ell},v_{k,\ell}):(0,0)\leq (k,\ell)\leq (p,q)\}$ be defined by  
\[
          (u'_{k,\ell},v'_{k,\ell}) = \begin{cases} 
            (\ell, k) &  \text{ if } k, \ell\geq 1 \\ 
            (1,1)  & \text{ if } k\ell=0.
            \end{cases} 
\]
The prime $(p,q)$-parking functions are exactly the two-dimensional $\bsy{U_0'}$-parking functions, and the same for their increasing counterparts. That is,
\[
\PPF(p,q) = \PF_{p,q}^{(2)}(\bsy{U_0'})\qquad \text{and} \qquad
\IPPF(p,q) = \IPF_{p,q}^{(2)}(\bsy{U_0'})\, . 
\]

  \end{proposition}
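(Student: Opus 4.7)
The plan is to combine equivalence (3) of \cref{prop:equivalent} with \cref{thm:pq is U}, which together reduce primeness of a $(p,q)$-parking function to the standard parking condition on the pair $(\bsy{a}',\bsy{b}')$ obtained by deleting a $0$ from each of $\bsy{a}$ and $\bsy{b}$. The key structural observation is that the weight matrix $\bsy{U_0'}$ coincides, after the coordinate shift $(k,\ell)\mapsto(k-1,\ell-1)$, with the standard matrix $\bsy{U_0}$ of \cref{thm:pq is U} on the interior subgrid $[1,p]\times[1,q]$, whereas every weight on the axes $\{k=0\}\cup\{\ell=0\}$ equals $1$. A weight of $1$ on an axis edge strictly bounds the corresponding order statistic to be $0$, which exactly matches the condition $a_{(0)}=b_{(0)}=0$ implicit in \cref{prop:equivalent}(3). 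I will carry out the argument for $p,q\ge 1$ (with $(p,q)=(1,1)$ handled directly), since the $pq=0$ cases reduce to a short check against \cref{IPPFLemma}.

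For the forward inclusion $\PPF(p,q)\subseteq\PF^{(2)}_{p,q}(\bsy{U_0'})$, I take $(\bsy{a},\bsy{b})\in\PPF(p,q)$ and apply \cref{prop:equivalent}(3) to obtain $(\bsy{a}',\bsy{b}')\in\PF(p-1,q-1)$. \cref{thm:pq is U} then furnishes a path $P'\in\mathcal{L}(p-1,q-1)$ bounding $(\bsy{a}',\bsy{b}')$ with respect to the standard weight matrix. I construct $P\in\mathcal{L}(p,q)$ by prepending the two steps $EN$ to the translation of $P'$ starting at $(1,1)$. The two prefix edges carry $\bsy{U_0'}$-weight $1$ and bound $a_{(0)}=b_{(0)}=0$, while each east step of the translated portion at position $(k+1,\ell+1)$ has weight $u'_{k+1,\ell+1}=\ell+1$ and bounds $a_{(i+1)}=a'_{(i)}$, exactly matching the bound at $(k,\ell)$ in $P'$; the north steps work symmetrically. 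Hence $P$ bounds $(\bsy{a},\bsy{b})$ with respect to $\bsy{U_0'}$.

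For the reverse inclusion, I start with an arbitrary bounding path $P$ of $(\bsy{a},\bsy{b})$. The first east step of $P$ begins at a vertex with $k=0$, hence has weight $u'_{0,\ell}=1$, forcing $a_{(0)}=0$; symmetrically $b_{(0)}=0$. The path need not visit $(1,1)$, so I first rearrange: if $P$ begins with $E^jN$ for some $j\ge 1$, I replace the initial segment by $E\,N\,E^{j-1}$ (the case $N^jE$ is symmetric). Each new edge in the modified prefix carries weight $1$, since it sits either on an axis, or on the line $\ell=1$ with $k\ge 1$ where $u'_{k,1}=1$; moreover the original $P$ already forced $a_{(0)}=\cdots=a_{(j-1)}=0$ and $b_{(0)}=0$ on its all-east run followed by one $N$, so all new strict inequalities are satisfied. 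The resulting $P^*$ still bounds $(\bsy{a},\bsy{b})$, begins with $EN$, and stays in the interior after $(1,1)$. Removing its $EN$ prefix and shifting by $(-1,-1)$ produces a path in $\mathcal{L}(p-1,q-1)$ bounding $(\bsy{a}',\bsy{b}')$ with respect to $\bsy{U_0}$; invoking \cref{thm:pq is U} gives $(\bsy{a}',\bsy{b}')\in\PF(p-1,q-1)$, and \cref{prop:equivalent}(3) then yields $(\bsy{a},\bsy{b})\in\PPF(p,q)$.

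The main obstacle is the rearrangement step in the reverse direction: one must verify that every edge of $P^*$ strictly bounds the order statistic assigned to it by its new step index. This reduces to the fact that all $\bsy{U_0'}$-weights along the axes and along the line $\ell=1$ (resp.\ $k=1$) equal $1$, combined with the observation that the initial all-east (or all-north) run of $P$ had already forced the relevant order statistics to $0$. The increasing-case equality $\IPPF(p,q)=\IPF^{(2)}_{p,q}(\bsy{U_0'})$ follows from the same construction, since prepending or removing a leading $0$ preserves the weakly increasing property of $\bsy{a}$ and $\bsy{b}$.
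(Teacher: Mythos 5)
Your proof is correct and follows essentially the same route as the paper: reduce via \cref{prop:equivalent}(3) and \cref{thm:pq is U} to the pair $(\bsy{a}',\bsy{b}')$, and transfer bounding paths through the prefix $EN$ and the shift $(k,\ell)\mapsto(k-1,\ell-1)$ under which $\bsy{U_0'}$ matches $\bsy{U_0}$. Your reverse direction is actually more complete than the paper's, which only says ``reversing the argument'': the normalization replacing an initial $E^jN$ by $ENE^{j-1}$ (justified by the weight-$1$ edges on the axes and on the line $\ell=1$) is exactly the step needed to force the bounding path through $(1,1)$, and is the same device the paper deploys later in the proof of \cref{Theorem:affine-2-dim}.
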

  \begin{proof} 
      Let $(\bsy{a'},\bsy{b'})\in\PF(p-1,q-1)$ be the $(p-1,q-1)$ parking function obtained from $(\bsy{a},\bsy{b})$ by removing a 0 from each vector according to \cref{prop:equivalent}, so that $a'_i=a_{i+1}$ for $0\leq i\leq p-2$ and $b'_j=b_{j+1}$ for $0\leq j\leq q-2$. By \cref{thm:pq is U},  we have that $(\bsy{a'},\bsy{b'})\in\PF^{(2)}_{p-1,q-1}(\bsy{U_0})$, where $\bsy{U_0}=\{ (u_{k,\ell},v_{k,\ell}):(0,0)\leq (k,\ell)\leq (p-1,q-1)\}$ is given by 
      \[(u_{k,\ell},v_{k,\ell})=(k+1,\ell+1)=\begin{cases}
          (u'_{k,\ell}+1,v'_{k,\ell}+1)&k,\ell\geq 1\\(u'_{k,\ell},v'_{k,\ell})&k\ell=0.
      \end{cases}\] 
      Let $P'\in\Lc(p-1,q-1)$ be a path which bounds $(\bsy{a'},\bsy{b'})$ with respect to $\bsy{U_0}$, and consider the path $P=\{EN\}\oplus P'\in\Lc(p,q)$. The weights of all but the first two edges of $P$ lie off the $x=0$ and $y=0$ axes, and as the $x$- and $y$-coordinates of those edges are incremented by 1 while $(u'_{k,\ell},v'_{k,\ell})=(u_{k,\ell}-1,v_{k,\ell}-1)$ for all $k,\ell\geq 1$, the weights of those edges with respect to $\bsy{U_0'}$ are identical to their weights with respect to $\bsy{U_0}$.  Thus the last $p-1$ horizontal edges of $P$ bound those of $\bsy{a}$ and the last $q-1$ vertical edges of $P$ bound those of $\bsy{b}$. Moreover, the first edge of $\bsy{a}$ (resp.~$\bsy{b}$) is $a_0=0$ (resp.~$b_0=0$), which is bounded by $u'_{0,1}=1$ (resp.~$v'_{1,0}=1$). Thus $P$ bounds $(\bsy{a},\bsy{b})$ with respect to $\bsy{U_0'}$, proving $(\bsy{a},\bsy{b})\in\PF^{(2)}_{p,q}(\bsy{U_0'})$. Reversing the argument establishes the equivalence in the other direction. Since these bounds are independent of the labelings on the edges, the corresponding claim holds for $\IPPF(p,q)$, as well.
  \end{proof}
   
\begin{example}\label{ex:pq prime}
For $(p,q)=(3,4)$, the pair $\bsy{a}=(0,0,3)$ and $\bsy{b}=(0,0,1,1)$ is a prime $(p,q)$-parking function, and can also be seen as a two-dimensional $\bsy{U_0'}$-parking function as defined in \cref{prop:2vector}. Consider the path $P=NENENNE\in\Lc(3,4)$, which is shown below. The weights according to $\bsy{U'_0}$ on its horizontal and vertical edges are, respectively, $(1,2,4)$ and $(1,1,2,2)$, which indeed bound $(\bsy{a},\bsy{b})$. We remark that the path $P$ must be such that with the exception of its first edge, it does not share any vertical edges with $L_{\bsy{b}}$ or horizontal edges with $L^\perp_{\bsy{a}}$. In particular, this can only be achieved when $L_{\bsy{b}}$ and $L^\perp_{\bsy{a}}$ do not touch except at the points $(0,0)$ and $(p,q)$.
\begin{center}
\begin{tikzpicture}[scale=0.8]
\draw[step=1cm, thick, dotted] (0,0) grid (3,4);
\node at (.5, -.5) {\tcr{$\mathbf{0}$}}; 
\node at (1.5,-.5) {\tcr{$\mathbf{0}$}}; 
\node at (2.5, -.5) {\tcr{$\mathbf{3}$}}; 
\node at (-0.7, 0.5) {\tcb{$\mathbf{0}$}}; 
\node at (-0.7, 1.5) {\tcb{$\mathbf{0}$}}; 
\node at (-0.7, 2.5) {\tcb{$\mathbf{1}$}}; 
\node at (-0.7, 3.5) {\tcb{$\mathbf{1}$}}; 

\foreach \i in {0,...,3}
{
\foreach \j in {1,...,3}
{
\node at (\j+.2,\i+.5) {\tiny \j};
}
\node at (.2,\i+.5) {\tiny 1};
}
\foreach \j in {0,...,2}
{
\foreach \i in {1,...,4}
{
\node at (\j+.5,\i+.2) {\tiny \i};
}
\node at (\j+.5,.2) {\tiny 1};
}

\draw[red, very thick] (0,0)--(2,0)--(2,2)--(2,3)--(3,3)--(3,4); 
\draw[blue, very thick] (0,0)--(0,2)--(1,2)--(1,4)--(3,4);

\draw[black,line width=.8mm,dashed] (0,0)--(0,1)--(1,1)--(1,2)--(2,2)--(2,4)--(3,4);
\end{tikzpicture}
\end{center} 
\end{example}

\subsection{Decomposition into prime components}
As mentioned earlier, the notion of primeness requires that any object can be uniquely decomposed into prime components, such that the prime components are themselves indecomposable. We give an explicit description of such a decomposition for $(p,q)$-parking functions into their prime components. We first give a description for the increasing $(p,q)$-parking functions.

Let $(\bsy{i^{(1)}},\bsy{j^{(1)}}) \in IPF(p_1,q_1), \ldots, (\bsy{i^{(k)}},\bsy{j^{(k)}}) \in IPF(p_k,q_k)$ be a sequence of increasing prime $(p_\ell,q_\ell)$-parking functions for $p_1+\cdots+p_k=p$ and $q_1+\cdots+q_k=q$. Then we define the direct sum $(\bsy{i},\bsy{j})=(\bsy{i^{(1)}},\bsy{j^{(1)}}) \oplus \cdots \oplus (\bsy{i^{(k)}},\bsy{j^{(k)}})$ to be the increasing $(p,q)$-parking function corresponding to the lattice paths $L^\perp_{\bsy{i}}:=L^\perp_{\bsy{i^{(1)}}} \oplus \cdots \oplus L^\perp_{\bsy{i^{(k)}}}$ and $L_{\bsy{j}}:=L_{\bsy{j^{(1)}}} \oplus \cdots \oplus L_{\bsy{j^{(k)}}}$.

Then, any increasing $(p,q)$-parking function $(\bsy{i},\bsy{j})$ may be uniquely decomposed as a direct sum of increasing prime $(p_i,q_i)$-parking functions, for uniquely determined $(p_\ell,q_\ell)$ for $\ell=1,\ldots,k$, as follows. Let $(0,0) = (x_0,y_0), (x_1,y_1),\ldots,(x_k,y_k) = (p,q)$ denote the (pointwise increasing) coordinates of the lattice points at which $L^\perp_{\bsy{i}}$ and $L_{\bsy{j}}$ intersect. For each $1 \le \ell \le k$, let 
\begin{align*}
\bsy{i_\ell}& = (0 = a_{y_{\ell-1}} - a_{y_{\ell-1}},a_{y_{\ell-1}+1} - a_{y_{\ell-1}},\ldots,a_{y_\ell-1} - a_{y_{\ell-1}}) \text{ and} \\
\bsy{j_\ell} &= (0 = b_{x_{\ell-1}} - b_{x_{\ell-1}},b_{x_{\ell-1}+1} - b_{x_{\ell-1}},\ldots,b_{x_\ell-1} - b_{x_{\ell-1}}).
\end{align*}
If $x_{\ell-1} = x_\ell$ (respectively $y_{\ell-1} = y_\ell$), then we let $\bsy{j_\ell} = \emptyset$ (respectively $\bsy{i_\ell} = \emptyset$) by convention.

Then each $(\bsy{i_\ell},\bsy{j_\ell})$ is an increasing prime $(x_\ell-x_{\ell-1},y_\ell-y_{\ell-1})$-parking function, whose corresponding lattice paths are given by the portions of $L^\perp_{\bsy{i}}$ and $L_{\bsy{j}}$ (respectively) which lie weakly between the points $(x_{\ell-1},y_{\ell-1})$ and $(x_\ell,y_\ell)$. By construction, we thus have the following (unique) decomposition of $(\bsy{i},\bsy{j})$ into increasing prime parking functions:
\[
(\bsy{i},\bsy{j}) = (\bsy{i^{(1)}},\bsy{j^{(1)}}) \oplus \cdots \oplus (\bsy{i^{(k)}},\bsy{j^{(k)}}).
\]

\begin{example}\label{ex:pq-prime-decomp}
We show the prime decomposition of the increasing $(6,5)$-parking function $(\bsy{i},\bsy{j})$ with $\bsy{i}=(0,0,2,3,3,3)$ and $\bsy{j}=(0,0,1,5,6)$. For this example, ignore the labels on the edges and the sequences $\bsy{a}$ and $\bsy{b}$. 

\begin{center}
\begin{tikzpicture}[scale=0.6]
\draw[step=1cm, thick, dotted] (0,0) grid (6,5);

\node[purple] at (0.5,-0.3) {\tiny 1};
\node[purple] at (1.5,-0.3) {\tiny 5};
\node[purple] at (2.5,1.7) {\tiny 3};
\node[purple] at (3.5,2.7) {\tiny 0};
\node[purple] at (4.5,2.7) {\tiny 2};
\node[purple] at (5.5,2.7) {\tiny 4};

\node[cyan!50!blue] at (-0.3,0.5) {\tiny 2};
\node[cyan!50!blue] at (-0.3,1.5) {\tiny 4};
\node[cyan!50!blue] at (0.7,2.5) {\tiny 1};
\node[cyan!50!blue] at (4.7,3.5) {\tiny 3};
\node[cyan!50!blue] at (5.7,4.5) {\tiny 0};

\draw[blue,very thick]  (0,0)--(0,2)--(1,2)--(1,3)--(3,3);
\draw[blue, very thick, yshift=0.03cm] (3,3)--(5,3);
\draw[blue, very thick] (5,3)--(5,4)--(6,4);
\draw[blue, very thick, xshift=-0.03cm] (6,4)--(6,5);

\draw[red, very thick] (0,0)--(2,0)--(2,2)--(3,2)--(3,3);
\draw[red, very thick, yshift=-0.03cm] (3,3)--(5,3);
\draw[red, very thick] (5,3)--(6,3)--(6,4);
\draw[red, very thick, xshift=0.03cm] (6,4)--(6,5);

\foreach \u\i in {0/0,3/3,4/3,5/3,6/4,6/5}
{
\filldraw[red,very thick] (\u,\i) circle (3pt);
}

\node at (7.5,2.5) {$=$};
\node at (13.5,2.5) {$\oplus$};
\node at (17,2.5) {$\oplus$};
\node at (20,2.5) {$\oplus$};
\node at (23.5,2.5) {$\oplus$};


\begin{scope}[xshift=9cm,yshift=1cm]
\draw[step=1cm, thick, dotted] (0,0) grid (3,3);

\draw[red, very thick] (0,0)--(2,0)--(2,2)--(3,2)--(3,3); 
\draw[blue, very thick] (0,0)--(0,2)--(1,2)--(1,3)--(3,3);

\node[purple] at (0.5,-0.3) {\tiny 0};
\node[purple] at (1.5,-0.3) {\tiny 2};
\node[purple] at (2.5,1.7) {\tiny 1};

\node[cyan!50!blue] at (-0.3,0.5) {\tiny 1};
\node[cyan!50!blue] at (-0.3,1.5) {\tiny 2};
\node[cyan!50!blue] at (0.7,2.5) {\tiny 0};

\foreach \u\i in {0/0,3/3}
{
\filldraw[red,very thick] (\u,\i) circle (3pt);
}
\end{scope}


\begin{scope}[xshift=14.75cm,yshift=2.5cm]
\draw[step=1cm, thick, dotted] (0,0) grid (1,0);
\draw[red, very thick,yshift=-0.03cm] (0,0)--(1,0);
\draw[blue, very thick,yshift=0.03cm] (0,0)--(1,0);

\node[purple] at (0.5,-0.3) {\tiny 0};
\foreach \u\i in {0/0,1/0}
{
\filldraw[red,very thick] (\u,\i) circle (3pt);
}
\end{scope}


\begin{scope}[xshift=18cm,yshift=2.5cm]
\draw[step=1cm, thick, dotted] (0,0) grid (1,0);
\draw[red, very thick,yshift=-0.03cm] (0,0)--(1,0);
\draw[blue, very thick,yshift=0.03cm] (0,0)--(1,0);

\node[purple] at (0.5,-0.3) {\tiny 0};
\foreach \u\i in {0/0,1/0}
{
\filldraw[red,very thick] (\u,\i) circle (3pt);
}
\end{scope}


\begin{scope}[xshift=21.5cm,yshift=2cm]
\draw[step=1cm, thick, dotted] (0,0) grid (1,1);
\draw[red, very thick] (0,0)--(1,0)--(1,1);
\draw[blue, very thick] (0,0)--(0,1)--(1,1);

\node[purple] at (0.5,-0.3) {\tiny 0};
\node[cyan!50!blue] at (-0.3,0.5) {\tiny 0};
\foreach \u\i in {0/0,1/1}
{
\filldraw[red,very thick] (\u,\i) circle (3pt);
}
\end{scope}


\begin{scope}[xshift=25cm,yshift=2cm]
\draw[step=1cm, thick, dotted] (0,0) grid (0,1);

\draw[red, very thick,xshift=0.03cm] (0,0)--(0,1);
\draw[blue, very thick,xshift=-0.03cm] (0,0)--(0,1);

\node[cyan!50!blue] at (-0.3,0.5) {\tiny 0};
\foreach \u\i in {0/0,0/1}
{
\filldraw[red,very thick] (\u,\i) circle (3pt);
}
\end{scope}

\node[red] at (3,-2) {$\bsy{i}=(0,0,2,3,3,3)$};
\node[blue] at (2.7,-3) {$\bsy{j}=(0,0,1,5,6)$};
\node[purple] at (3,-4) {$\bsy{a}=(3,0,3,2,3,0)$};
\node[cyan!50!blue] at (2.7,-5) {$\bsy{b}=(6,1,0,5,0)$};

\node[red,xshift=5.5cm] at (1,-2) {$\bsy{i^{(1)}}=(0,0,2)$};
\node[blue,xshift=5.5cm] at (1,-3) {$\bsy{j^{(1)}}=(0,0,1)$};
\node[purple,xshift=5.5cm] at (1,-4) {$\bsy{a^{(1)}}=(0,2,0)$};
\node[cyan!50!blue,xshift=5.5cm] at (1,-5) {$\bsy{b^{(1)}}=(1,0,0)$};

\node[red,xshift=8.5cm] at (1,-2) {$\bsy{i^{(2)}}=(0)$};
\node[blue,xshift=8.35cm] at (1,-3) {$\bsy{j^{(2)}}=\emptyset$};
\node[purple,xshift=8.5cm] at (1,-4) {$\bsy{a^{(2)}}=(0)$};
\node[cyan!50!blue,xshift=8.35cm] at (1,-5) {$\bsy{b^{(2)}}=\emptyset$};

\node[red,xshift=10.5cm] at (1,-2) {$\bsy{i^{(3)}}=(0)$};
\node[blue,xshift=10.35cm] at (1,-3) {$\bsy{j^{(3)}}=\emptyset$};
\node[purple,xshift=10.5cm] at (1,-4) {$\bsy{a^{(3)}}=(0)$};
\node[cyan!50!blue,xshift=10.35cm] at (1,-5) {$\bsy{b^{(3)}}=\emptyset$};

\node[red,xshift=12.5cm] at (1,-2) {$\bsy{i^{(4)}}=(0)$};
\node[blue,xshift=12.5cm] at (1,-3) {$\bsy{j^{(4)}}=(0)$};
\node[purple,xshift=12.5cm] at (1,-4) {$\bsy{a^{(4)}}=(0)$};
\node[cyan!50!blue,xshift=12.5cm] at (1,-5) {$\bsy{b^{(4)}}=(0)$};

\node[red,xshift=14.35cm] at (1,-2) {$\bsy{i^{(5)}}=\emptyset$};
\node[blue,xshift=14.5cm] at (1,-3) {$\bsy{j^{(5)}}=(0)$};
\node[purple,xshift=14.35cm] at (1,-4) {$\bsy{a^{(5)}}=\emptyset$};
\node[cyan!50!blue,xshift=14.5cm] at (1,-5) {$\bsy{b^{(5)}}=(0)$};
\end{tikzpicture}
\end{center} 
\end{example}

Now let $(\bsy{a},\bsy{b})$ be a $(p,q)$-parking function. We may similarly decompose $(\bsy{a},\bsy{b})$ into prime components as follows. Let $\bsy{i} = (a_{(0)},\dots,a_{(p-1)})$ and $\bsy{j} = (b_{(0)},\dots,b_{(q-1)})$ denote the increasing rearrangements of $\bsy{a}$ and $\bsy{b}$, noting that $L^\perp_{\bsy{a}} = L^\perp_{\bsy{i}}$ and $L_{\bsy{b}} = L_{\bsy{j}}$ as unlabeled lattice paths. As above, let $(0,0) = (x_0,y_0),(x_1,y_1),\ldots,(x_k,y_k) = (p,q)$ denote the (pointwise increasing) coordinates of the lattice points at which the two lattice paths intersect. Then the decomposition of $(\bsy{i},\bsy{j})$ into prime components described above determines a decomposition of the lattice paths $L^\perp_{\bsy{a}}$ and $L_{\bsy{b}}$ (ignoring labels, for now):
\[
L^\perp_{\bsy{a}} = L^\perp_{\bsy{a},1} \oplus \cdots \oplus L^\perp_{\bsy{a},k}, \quad L_{\bsy{b}} = L_{\bsy{b},1} \oplus \cdots \oplus L_{\bsy{b},k}.
\]
For $1 \le \ell \le k$, let $A_\ell$ denote the set of labels which appear on the horizontal steps of $L^\perp_{\bsy{a},\ell}$, and let $B_\ell$ denote the set of labels which appear on the vertical steps of $L_{\bsy{b},\ell}$. Replace the labels in $A_\ell$ by $\{ 0, 1, \ldots, |A_\ell|-1 \}$ and replace the labels in $B_\ell$ by $\{ 0, 1, \ldots, |B_\ell|-1 \}$, maintaining the same numerical order in each case, to obtain a pair of labeled lattice paths ${L^\perp_{\bsy{a},\ell}}', {L_{\bsy{b},\ell}}'$, where the horizontal edges of ${L^{\perp}_{\bsy{a},\ell}}'$ are labeled by  $\mathbb{N}_{|A_\ell|}$ and the vertical edges of ${L_{\bsy{b},\ell}}'$ are labeled by $\mathbb{N}_{|B_\ell|}$.
Let $(\bsy{a}^{\bsy{(\ell)}},\bsy{b}^{\bsy{(\ell)}})$ denote the $(x_\ell - x_{\ell-1},y_\ell - y_{\ell-1})$-parking function corresponding to the pair of labeled lattice paths ${L^\perp_{\bsy{a},\ell}}',{L_{\bsy{b},\ell}}'$. Then, we write
\[
(\bsy{a},\bsy{b}) = (\bsy{a}^{\bsy{(1)}},\bsy{b}^{\bsy{(1)}}) \oplus \cdots \oplus (\bsy{a}^{\bsy{(k)}},\bsy{b}^{\bsy{(k)}}).
\]

\begin{example}
    Let $(\bsy{a},\bsy{b})$ be the $(6,5)$-parking function  with $\bsy{a} = (3,0,3,2,3,0)$ and $\bsy{b} = (6,1,0,5,0)$. The prime decomposition of the increasing rearrangement $(\bsy{i},\bsy{j})$ of $(\bsy{a},\bsy{b})$ is given in \Cref{ex:pq-prime-decomp}. Here we have 
    \[
        A_1 = \{ 1, 3, 5 \},\quad A_2 = \{ 0 \},\quad A_3 = \{ 2 \},\quad A_4 = \{ 4 \},\quad A_5 = \emptyset,\quad \text{and}
    \]
    \[
        B_1 = \{ 1, 2, 4 \},\quad B_2 = B_3 = \emptyset,\quad B_4 = \{ 3 \},\quad B_5 = \{ 0 \}.
    \]
    As a result, the prime components are given by 
    \[
    (\bsy{a}^{\bsy{(1)}},\bsy{b}^{\bsy{(1)}}) = ((0,2,0),  (1,0,0)),\quad (\bsy{a}^{\bsy{(2)}}, \bsy{b}^{\bsy{(2)}}) = (\bsy{a}^{\bsy{(3)}},\bsy{b}^{\bsy{(3)}}) = ((0),\emptyset),
    \]
    \[
    (\bsy{a}^{\bsy{(4)}}, \bsy{b}^{\bsy{(4)}}) = ((0),(0)),\quad (\bsy{a}^{\bsy{(5)}},\bsy{b}^{\bsy{(5)}}) = (\emptyset,(0)).
    \]
\end{example}

\subsection{Enumeration of prime $(p,q)$-parking functions}

Increasing $(p,q)$-parking functions are in bijection with pairs of lattice paths, letting us easily compute $\#\IPPF(p,q)$. 

\begin{lemma}\label{lemma:IPPF-2dim}
The number of increasing prime $(p,q)$-parking functions is 
$$
\#\IPPF(p,q)=\frac{1}{p+q-1}\binom{p+q-1}{p-1}\binom{p+q-1}{q-1}\, .$$
\end{lemma}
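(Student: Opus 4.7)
The plan is to use \cref{prop:equivalent}(3) to set up a bijection between $\IPPF(p,q)$ and $\IPF(p-1, q-1)$, and then invoke the known Narayana-number formula for increasing $(p,q)$-parking functions.

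First I would observe that if $(\bsy{a},\bsy{b}) \in \IPPF(p,q)$ with $p,q \ge 1$, then by \cref{prop:equivalent}(3) (combined with the fact that the inequalities in \cref{IPPFLemma} force a $0$ entry in each of $\bsy{a}$ and $\bsy{b}$), we have $a_0 = b_0 = 0$. Removing these leading zeros produces the pair $(\bsy{a}', \bsy{b}') = ((a_1,\dots,a_{p-1}),(b_1,\dots,b_{q-1}))$, which is still weakly increasing and, by \cref{prop:equivalent}(3), belongs to $\IPF(p-1, q-1)$. Conversely, given any $(\bsy{a}',\bsy{b}') \in \IPF(p-1,q-1)$, prepending a $0$ entry to each of $\bsy{a}'$ and $\bsy{b}'$ yields an increasing pair whose corresponding lattice paths start with $EN$ and $NE$ respectively; reversing the computation in \cref{prop:equivalent}(3) shows the result lies in $\IPPF(p,q)$. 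These two operations are mutually inverse, yielding the bijection
\[
\IPPF(p,q) \longleftrightarrow \IPF(p-1,q-1).
\]

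Next I would apply the Cori--Poulalhon formula $\#\IPF(p,q) = \frac{1}{p+q+1}\binom{p+q+1}{p}\binom{p+q+1}{q}$ recalled earlier in the excerpt, with $p$ and $q$ replaced by $p-1$ and $q-1$. Setting $n = (p-1)+(q-1) = p+q-2$, this gives
\[
\#\IPPF(p,q) \;=\; \#\IPF(p-1,q-1) \;=\; \frac{1}{p+q-1}\binom{p+q-1}{p-1}\binom{p+q-1}{q-1},
\]
which is the desired expression.

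The only subtlety is handling the degenerate boundary cases (for instance, when $p=1$, the parking function $\bsy{a}' = \emptyset$ and the formula for $\IPF(0,q-1)$ must be interpreted via the convention at the end of Section 2.2). I expect this to be straightforward bookkeeping rather than a real obstacle, since the Cori--Poulalhon formula was explicitly extended to the $pq=0$ case in the text. The main conceptual step is the bijection above, which just translates the prime condition (no interior intersection of the two lattice paths) into the statement that their first steps are $E$ and $N$, and thus that the remainder is an arbitrary increasing $(p-1,q-1)$-parking function.
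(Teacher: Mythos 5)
Your proof is correct, but it takes a different route from the paper's. The paper uses \cref{prop:equivalent}(2): since $a_0=b_0=0$, $a_{p-1}\le q-1$, and $b_{q-1}\le p-1$, an increasing prime $(p,q)$-parking function corresponds to a pair of non-intersecting lattice paths from $(1,0)$ to $(p,q-1)$ and from $(0,1)$ to $(p-1,q)$, and the Lindstr\"om--Gessel--Viennot lemma then gives the count as the determinant $\binom{p+q-2}{q-1}^2-\binom{p+q-2}{q}\binom{p+q-2}{p}$, which simplifies to the stated closed form. You instead use \cref{prop:equivalent}(3) to build the zero-removal/zero-prepending bijection $\IPPF(p,q)\leftrightarrow\IPF(p-1,q-1)$ and then substitute into the Cori--Poulalhon Narayana formula. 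Your argument is arguably cleaner: it avoids LGV entirely, skips the (unwritten) algebraic simplification of the determinant, and parallels both \cref{lem:classical prime equivalent} in the classical case (where $\#\IPPF(n)=C_{n-1}$) and the reduction used for prime $\bsy{u}$-parking functions; the paper's LGV route, on the other hand, yields a determinant that would survive in settings without a product formula. One cosmetic point: after prepending zeros, $L^\perp_{\bsy{a}}$ merely begins with an $E$ step and $L_{\bsy{b}}$ with an $N$ step (not necessarily with $EN$ and $NE$), but this remark is not load-bearing since the primeness of the resulting pair is exactly the ``reversing the argument'' direction of \cref{prop:equivalent}(3). Your handling of the degenerate cases via the paper's $pq=0$ conventions is also fine.
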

\begin{proof}
From \cref{prop:equivalent}, an increasing prime $(p,q)$-parking function $(\bsy{a},\bsy{b})$ must have $a_0=0$, $b_0=0$, $a_{p-1}\leq q-1$, and $b_{q-1}\leq p-1$. Thus the set of increasing prime $(p,q)$-parking functions is in bijection with pairs of non-intersecting lattice paths from $(1,0)$ to $(p,q-1)$ and $(0,1)$ to $(p-1,q)$, respectively, which can be counted using the Lindstr\"{o}m--Gessel--Viennot Lemma \cite{GV85} to obtain $\#\IPPF(p,q)=\binom{p+q-2}{q-1}^2-\binom{p+q-2}{q}\binom{p+q-2}{p}$. 
\end{proof}

To enumerate the prime $(p,q)$-parking functions, we use the equivalence given in \cref{prop:2vector}. 

\begin{lemma}\label{lem:sum} For $pq\geq 1$,
    \begin{equation}\label{eq:PFZ}
    \#\PPF(p,q)=\sum_{i=1}^p\sum_{j=1}^q \binom{p}{i}\binom{q}{j}(1+qi+pj-p-q-ij)(q-1)^{p-i-1}(p-1)^{q-j-1}.
    \end{equation}
\end{lemma}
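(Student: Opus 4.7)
The plan is to combine the reinterpretation of $\PPF(p,q)$ as $\PF^{(2)}_{p,q}(\bsy{U_0'})$ from \cref{prop:2vector} with the closed-form enumeration of affine two-dimensional parking functions. The coefficient $(1+qi+pj-p-q-ij)(q-1)^{p-i-1}(p-1)^{q-j-1}$ appearing on the right-hand side is precisely $\#\PF^{(2)}_{p-i,\,q-j}(\bsy{U}^{(i,j)})$ produced by the affine formula applied to the weight matrix $\bsy{U}^{(i,j)}$ with $(u_{k,\ell}, v_{k,\ell}) = (\ell+j-1,\,k+i-1)$ (that is, parameters $a=0, b=1, c=1, d=0, s=j-1, t=i-1$). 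This strongly suggests partitioning $\PPF(p,q)$ by the numbers of zero entries $i$ in $\bsy{a}$ and $j$ in $\bsy{b}$, both of which are at least $1$ by applying \cref{IPPFLemma} at $m=1$.

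After choosing the $\binom{p}{i}\binom{q}{j}$ positions of the zeros, the remaining nonzero entries form sequences $\bsy{a}^*, \bsy{b}^*$ of positive integers, and I set $(\bsy{a}^{**}, \bsy{b}^{**}) := (\bsy{a}^* - 1, \bsy{b}^* - 1)$. The central step is the bijective equivalence
\[
(\bsy{a}, \bsy{b}) \in \PPF(p,q) \text{ with $i,j$ zeros} \iff (\bsy{a}^{**}, \bsy{b}^{**}) \in \PF^{(2)}_{p-i,\,q-j}(\bsy{U}^{(i,j)}).
\]
The easy $(\Leftarrow)$ direction uses the concatenated path $P := E^i N^j \oplus P''$, where $P''$ is a bounding path for $(\bsy{a}^{**}, \bsy{b}^{**})$ with respect to $\bsy{U}^{(i,j)}$. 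The initial $E^i$ at $y=0$ has $\bsy{U_0'}$-east-weight $u'_{k,0}=1$, sufficient to bound the zeros of $\bsy{a}$; the following $N^j$ in the column $x=i$ has north-weights $v'_{i,0}=1$ and $v'_{i,\ell}=i \geq 1$ for $\ell \geq 1$, bounding the zeros of $\bsy{b}$; and on the interior region $\{(k,\ell) : k \geq i, \ell \geq j\}$ the weights $u'_{k,\ell}=\ell$, $v'_{k,\ell}=k$ translate by a direct shift into exactly the $\bsy{U}^{(i,j)}$-bounds on $(\bsy{a}^{**}, \bsy{b}^{**})$.

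The hard $(\Rightarrow)$ direction is where I expect the main obstacle, since an arbitrary bounding path for $(\bsy{a},\bsy{b})$ with respect to $\bsy{U_0'}$ need not pass through $(i,j)$ and so cannot simply be truncated. Rather than attempting a path-level argument, I would use the inequality characterization of primality from \cref{IPPFLemma} together with the shift $\bsy{a}^{**}=\bsy{a}^*-1$, $\bsy{b}^{**}=\bsy{b}^*-1$. A direct calculation translates these into the conditions
\[
\#\{r : a^{**}_r < m'+j-1\} \geq b^{**}_{(m')}+2-i \quad \text{for } 0 \leq m' \leq q-j-1,
\]
together with the symmetric family for $\bsy{b}^{**}$. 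I would then argue, via a standard scheduling/feasibility analysis, that these inequalities are equivalent to the existence of a lattice path bounding $(\bsy{a}^{**},\bsy{b}^{**})$ with respect to $\bsy{U}^{(i,j)}$: such a path exists iff there is no pair $(r,s)$ for which both $a^{**}_{(r-1)} \geq s+j-2$ and $b^{**}_{(s-1)} \geq r+i-2$ hold simultaneously, and this no-conflict condition is precisely the translated primality inequality.

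With the bijection in hand, I would apply the affine formula to $\#\PF^{(2)}_{p-i,\,q-j}(\bsy{U}^{(i,j)})$, verify by a short computation that the three factors become $(st+tbq'+scp')=1+qi+pj-p-q-ij$, $(s+ap'+bq')=q-1$, and $(t+cp'+dq')=p-1$ for $p'=p-i, q'=q-j$, and sum over $(i,j)$. A minor technical point is that when $i=p$ or $j=q$ one of the exponents becomes $-1$; this is reconciled using the boundary conventions for two-dimensional parking functions noted in the remark after \cref{eq:goncarov}, where the vanishing coefficient cancels with the singular exponent to yield the correct boundary value.
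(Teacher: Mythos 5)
Your overall strategy is the same as the paper's: partition $\PPF(p,q)$ by the numbers $i,j$ of zero entries, biject the class with $i,j$ zeros onto $\PF^{(2)}_{p-i,q-j}$ of the shifted weight matrix $(\ell+j-1,k+i-1)$, and then apply \cref{eq:goncarov} with $(a,b,c,d,s,t)=(0,1,1,0,j-1,i-1)$ and sum. The algebraic identifications of the three factors and the handling of the boundary terms $i=p$ or $j=q$ are fine.

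The one place where you diverge is the forward direction of the bijection, and it is also the one place where your argument is not actually complete: you reduce it to the claim that your translated inequalities are equivalent to the existence of a bounding lattice path for $(\bsy{a}^{**},\bsy{b}^{**})$ with respect to $\bsy{U}^{(i,j)}$, and you assert this ``scheduling/feasibility'' equivalence without proof. Such an equivalence is plausible (it is the analogue of \cref{IPFLemma} for the shifted weights), but as written it is the load-bearing step and is missing. The obstacle you flag --- that an arbitrary bounding path need not pass through $(i,j)$ --- has a much shorter resolution, which is what the paper uses: since exactly $i$ entries of $\bsy{a}$ are zero, the first $i$ horizontal steps of $L^\perp_{\bsy{a}}$ lie at height $0$, and similarly the first $j$ vertical steps of $L_{\bsy{b}}$ lie at $x=0$; hence $(i,j)$ is weakly above $L^\perp_{\bsy{a}}$ and weakly to the right of $L_{\bsy{b}}$, so among the paths lying between the two (which bound $(\bsy{a},\bsy{b})$ with respect to $\bsy{U_0'}$ by \cref{prop:2vector}) one can always choose one passing through $(i,j)$ and truncate it there. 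If you replace your feasibility sketch with this observation, your proof becomes the paper's proof.
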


\begin{proof}
    For a $(p,q)$-parking function $(\bsy{a},\bsy{b})$, define $m_0(\bsy{a})=\#\{j:a_j=0\}$ and $m_0(\bsy{b})=\#\{j:b_j=0\}$ to be the number of 0's in $\bsy{a}$ and $\bsy{b}$, respectively. Further, let $\bsy{a'}$ and $\bsy{b'}$ be the subsequences of positive integers of $\bsy{a}$ and $\bsy{b}$, respectively.
    For $(0,0)\leq (i,j)\leq (p,q)$, define 
    \[\PPF^{(i,j)}(p,q):=\{(\bsy{a},\bsy{b})\in\PPF(p,q):(m_0(\bsy{a}),m_0(\bsy{b}))=(i,j)\}.
    \] 
    We claim that $(\bsy{a},\bsy{b})\in \PPF^{(i,j)}(p,q)$ precisely when $(\bsy{a'}-1,\bsy{b'}-1)$ is a $\bsy{Z^{(i,j)}}$-parking function for the weight matrix $\bsy{Z^{(i,j)}}=\{ (u_{k,\ell},v_{k,\ell}):  (0,0) \leq (k,\ell) \leq (p-i,q-j)\}$ given by
    \[
    (u_{k,\ell},v_{k,\ell})=(\ell+j-1,k+i-1).
    \]
    
    Observe that the point $(i,j)$ is weakly above $L_{\bsy{a}}^{\perp}$ and weakly to the right of $L_{\bsy{b}}$, and let $P=(r_1,\ldots,r_{p+q})\in\mathcal{L}(p,q)$ be a lattice path that lies weakly between $L_{\bsy{a}}^{\perp}$ and $L_{\bsy{b}}$  and passes through $(i,j)$. 
    By \cref{prop:2vector}, $(\bsy{a},\bsy{b})$ is bounded by $P$ 
    with respect to $\bsy{Z'}$ defined by $z'_{k,\ell}=(\ell,k)$ for $k,\ell\geq 1$. Let the edge weights of $P$ with respect to $\bsy{Z'}$ be $\bsy{s}=(s_1,\ldots,s_p)$ and $\bsy{t}=(t_1,\ldots,t_q)$ for the horizontal and vertical edges, respectively, so that $(\bsy{a},\bsy{b})$ is bounded by $(\bsy{s},\bsy{t})$. 
    Let $P'=(r_{i+j+1},\ldots,r_{p+q})$ be the portion of $P$ following the point $(i,j)$. The edge weights of $P'$ with respect to $\bsy{Z^{(i,j)}}$ are $\bsy{s'}=(s_{i+1}-1,\ldots,s_{p}-1)$ and $\bsy{t'}=(t_{j+1}-1,\ldots,t_{q}-1)$. Thus $(\bsy{a'}-1,\bsy{b'}-1)$ is bounded by $(\bsy{s'},\bsy{t'})$, so $(\bsy{a'}-1,\bsy{b'}-1)\in {\PF^{(2)}_{p,q}(\bsy{Z^{(i,j)}})}$, proving our claim. 
     
Since there are $\binom{p}{i}$ and $\binom{q}{j}$ ways of arranging the $i$ 0's in $\bsy{a}$ and $j$ 0's in $\bsy{b}$, respectively,  we have
\[
\#\PPF(p,q)
=\sum_{i=1}^p\sum_{j=1}^q\#\PPF^{(i,j)}(p,q)
=\sum_{i=1}^p\sum_{j=1}^q \binom{p}{i}\binom{q}{j} \#{\PF^{(2)}_{p,q}(\bsy{Z^{(i,j)}}})\, .\]
We obtain the desired equality from \cref{eq:goncarov} by taking $\begin{pmatrix}
  a & b  \\
  c & d 
  \end{pmatrix} =\begin{pmatrix} 
  0 & 1  \\
  1 & 0 
  \end{pmatrix} $ and $\begin{pmatrix} 
  s  \\
  t 
  \end{pmatrix} =\begin{pmatrix}
  j-1  \\
  i-1 
  \end{pmatrix} $.
  
\end{proof}

\begin{theorem} \label{thm:PPF-2dim}
    When $p,q\geq 1$, the number of prime $(p,q)$-parking functions is
    \[\#\PPF(p,q)=p^q(q-1)^{p-1}+q^p(p-1)^{q-1}-(p+q-1)(p-1)^{q-1}(q-1)^{p-1}.\]
    If $p=0, q=1$ or $p=1, q=0$, $\#\PPF(p,q)=1$, and in all other cases, $\#\PPF(p,q)=0$.
\end{theorem}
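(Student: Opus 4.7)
The plan is to evaluate the double sum from \Cref{lem:sum} by separating variables and collecting terms. I will handle the main case $p,q \geq 2$ first, and verify the small cases $p=1$ or $q=1$ directly afterward.

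For $p,q \geq 2$, rewrite $(q-1)^{p-i-1}(p-1)^{q-j-1} = \frac{(q-1)^{p-i}(p-1)^{q-j}}{(p-1)(q-1)}$ and expand the polynomial coefficient as $1+qi+pj-p-q-ij = (1-p-q) + qi + pj - ij$. This splits the double sum into four contributions, each of which factors as a product of an $i$-sum and a $j$-sum. Each single sum is standard: the binomial theorem gives $\sum_{i=1}^p \binom{p}{i}(q-1)^{p-i} = q^p - (q-1)^p =: \alpha$, and differentiating $(y+x)^p$ in $y$ and evaluating at $(1,q-1)$ gives $\sum_{i=1}^p i\binom{p}{i}(q-1)^{p-i} = pq^{p-1}$, with symmetric identities for the $j$-sums involving $\beta := p^q - (p-1)^q$.

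Combining the four contributions, we obtain
\[
(p-1)(q-1)\cdot \#\PPF(p,q) = (1-p-q)\alpha\beta + pq^p\beta + qp^q\alpha - p^qq^p.
\]
Expanding $\alpha\beta = p^qq^p - q^p(p-1)^q - p^q(q-1)^p + (q-1)^p(p-1)^q$ and collecting monomials, the coefficient of $p^qq^p$ equals $(1-p-q) + p + q - 1 = 0$, while the surviving terms sum to $(q-1)q^p(p-1)^q + (p-1)p^q(q-1)^p - (p+q-1)(q-1)^p(p-1)^q$. Dividing by $(p-1)(q-1)$ yields the claimed formula.

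The edge cases must be handled separately since the denominator vanishes. For $p=1$ (the case $q=1$ is symmetric), \Cref{prop:equivalent}(3) says that removing a $0$ from each of $\bsy{a}$ and $\bsy{b}$ yields a $(0,q-1)$-parking function; by the text's convention the unique such object is $(\emptyset,(0,\dots,0))$, forcing $\bsy{a}=(0)$ and $\bsy{b}=(0,\dots,0)$. Hence $\#\PPF(1,q)=1$, which agrees with the formula under the convention $0^0 = 1$. The main obstacle is the bookkeeping in collecting the four contributions and verifying the cancellation of the $p^qq^p$ terms that produces the compact three-term closed form.
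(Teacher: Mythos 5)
Your proof is correct and takes essentially the same route as the paper: both evaluate the double sum of \cref{lem:sum} via the binomial theorem and its derivative, the only difference being that the paper extends the sum to include $i=0$ or $j=0$ and applies inclusion--exclusion while you separate variables directly, and your explicit treatment of the vanishing denominator at $p=1$ or $q=1$ is, if anything, slightly more careful than the paper's (whose manipulations implicitly divide by $p-1$ and $q-1$). The only omission is the $pq=0$ part of the statement, which you leave unaddressed; it is immediate from \cref{IPPFLemma}, so this is cosmetic.
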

\begin{proof}
When $pq=0$, the result follows directly from \cref{IPPFLemma}. Let $p,q\geq 1$, and set $c_{i,j}=\binom{p}{i}\binom{q}{j}(1+qi+pj-p-q-ij)(q-1)^{p-i-1}(p-1)^{q-j-1}$. By \cref{lem:sum}, the number of prime $(p,q)$-parking functions is
\[
\sum_{i=1}^p\sum_{j=1}^q c_{i,j}=\sum_{i=0}^p\sum_{j=0}^q c_{i,j} - \sum_{i=0}^p c_{i,0}-\sum_{j=0}^q c_{0,j}+c_{0,0}\, .
\]

\noindent 
We compute  
\begin{align*}
    \sum_{j=0}^q c_{0,j}&=(q-1)^{p-1}\Big((1-p-q)\sum_{j=0}^q \binom{q}{j}(p-1)^{q-j-1}+p\sum_{j=0}^q\binom{q}{j}j(p-1)^{q-j-1}\Big)\\
&=(q-1)^{p-1}(p-1)^{-1}\Big((1-p-q)p^q+qp^q\Big)
\\
&=-p^q(q-1)^{p-1},
\end{align*}
and by symmetry, $\sum_{i=0}^p c_{i,0}=-q^p(p-1)^{q-1}$. Then we have
\begin{align*}
    \sum_{i=0}^p\sum_{j=0}^p c_{i,j}&=\sum_{i=0}^p \binom{p}{i}(q-1)^{-i} \sum_{j=0}^q c_{0,j}+ \sum_{i=0}^p i \binom{p}{i}(q-1)^{p-i-1}\sum_{j=0}^q (q-j)\binom{q}{j}(p-1)^{q-j-1}\\
    &=-\sum_{i=0}^p \binom{p}{i}(q-1)^{-i} p^q(q-1)^{p-1}+\sum_{i=0}^p i \binom{p}{ i}(q-1)^{p-i-1}qp^{q-1}\\
    &=0\, .
\end{align*}
In the computations, we use the identities $\sum_{i=0}^a \binom{a}{i}x^i=\sum_{i=0}^a \binom{a}{ i}x^{a-i}=(1+x)^a$ and $\sum_{i=0}^ai \binom{a}{i}x^{i-1}=\sum_{i=0}^a(a-i) \binom{a}{i}x^{a-i-1}=a(1+x)^{a-1}$.

Combining the above with $c_{0,0}=(1-p-q)(q-1)^{p-1}(p-1)^{q-1}$, we get the desired equality.
\end{proof}

\subsection{Generalization for two-dimensional vector parking functions}\label{subsection:generalization for two dim}

Inspired by the primeness of $(p,q)$-parking functions, we propose a definition of primeness for two-dimensional vector 
parking functions with weight matrix $\bsy{U}$ for $p,q\geq 1$. 

\begin{definition} \label{def:general}
Let $p,q$ be positive integers. 
Assume $\bsy{U}=\{ (u_{k,\ell}, v_{k,\ell}): (0,0) \leq (k,\ell) \leq (p,q)\} \subset \mathbb{N}^2$. 
Let $\bsy{a}=(a_0, \dots, a_{p-1})\in\mathbb{N}^p$  and $\bsy{b}=(b_0, \dots, b_{q-1})\in\mathbb{N}^q$. 
 A pair  $(\bsy{a}, \bsy{b})$  is a \emph{prime two-dimensional $\bsy{U}$-parking function} if and only if 
there are lattice paths $P_1, P_2 \in \mathcal{L}(p,q)$   such that 
\begin{enumerate}
    \item The order statistics of $(\bsy{a}, \bsy{b})$ are bounded by both $P_1$ and $P_2$ with respect to $\bsy{U}$, and
    \item The only lattice points at which the lattice paths $P_1$ and $P_2$ intersect are 
    $(0,0)$ and $(p,q)$. 
\end{enumerate}
Denote by $\PPF^{(2)}_{p,q}(\bsy{U})$ the set of prime two-dimensional $\bsy{U}$-parking functions and by $\IPPF^{(2)}_{p,q}(\bsy{U})$ the set of
prime increasing ones.  
\end{definition}

Given $\bsy{U}=\{z_{k,\ell}=(u_{k,\ell}, v_{k,\ell}): \ (0,0) \leq (k,\ell) \leq (p,q)\}$, define a new weight matrix $\bsy{U'}$ by letting 
$\bsy{U'}=\{z'_{k,\ell}=(u'_{k,\ell}, v'_{k,\ell}): \ (0,0) \leq (k,\ell) \leq (p,q)\}$, where 
\begin{eqnarray}
    z'_{k,\ell} =(u'_{k,\ell}, v'_{k,\ell})=  \left\{ \begin{array}{ll} 
         (u_{k,\ell-1}, v_{k-1,\ell}) & \text{ if } k,\ell \geq 1, \\
        (u_{k,0}, v_{0,0})      & \text{if }  \ell=0  \\ 
        (u_{0,0}, v_{0,\ell})      & \text{ if }  k=0. \end{array} \right. 
 \end{eqnarray}
In particular, $z'_{0,0}=(u_{0,0}, v_{0,0})$. 

\begin{theorem}\label{Theorem:affine-2-dim}
The following sets are equal 
    \[
    PPF^{(2)}_{p,q}(\bsy{U}) =  PF^{(2)}_{p,q}(\bsy{U')}, 
    \]
    and 
    \[
     IPPF^{(2)}_{p,q}(\bsy{U}) =  IPF^{(2)}_{p,q}(\bsy{U'}). 
    \]
\end{theorem}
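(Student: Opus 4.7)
The plan is to prove both set equalities by establishing the two inclusions for the general identity, then noting that the increasing case is a direct corollary.

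\emph{Forward inclusion} $\PPF^{(2)}_{p,q}(\bsy{U}) \subseteq \PF^{(2)}_{p,q}(\bsy{U'})$: Given a prime $\bsy{U}$-parking function $(\bsy{a},\bsy{b})$ with non-crossing bounding paths $P_1, P_2$, take $P_1$ to be the south-east path (so $P_1$ begins with $E$ and ends with $N$, and $P_2$ begins with $N$ and ends with $E$). Define
\[
P := N \cdot (P_1 \setminus \text{last step}),
\]
i.e., prepend a north step and drop the final (north) step of $P_1$. This shifts each east step of $P_1$ up by exactly one row inside $P$. Using the identities $u'_{i,\ell+1} = u_{i,\ell}$ for $i \geq 1$ together with the boundary rules $u'_{0,\ell} = u_{0,0}$ and $u'_{i,0} = u_{i,0}$, the east-step weights of $P$ under $\bsy{U'}$ agree pointwise with those of $P_1$ under $\bsy{U}$, so $\bsy{a}$ is bounded by $P$. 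The prepended north step at $(0,0)$ has weight $v'_{0,0} = v_{0,0}$, which matches the first north step of $P_2$. For $j \geq 1$, the $j$-th north step of $P$ sits at column $c^{(1)}_{j-1}$ (the column of $P_1$'s $(j-1)$-th north step, which is $\geq 1$ since $P_1$ begins with $E$), with weight $v_{c^{(1)}_{j-1}-1,j}$. The non-crossing assumption, applied to the interior level $y = j$, forces $c^{(2)}_j + 1 \leq c^{(1)}_{j-1}$ (where $c^{(2)}_j$ is the column of $P_2$'s $j$-th north step), and monotonicity then gives $v_{c^{(1)}_{j-1}-1,j} \geq v_{c^{(2)}_j,j} > b_{(j)}$. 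Thus $P$ bounds $\bsy{b}$ as well.

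\emph{Reverse inclusion}: Given a path $P$ bounding $(\bsy{a},\bsy{b})$ under $\bsy{U'}$, let $\ell_0,\ldots,\ell_{p-1}$ denote the $y$-coordinates of its east steps and $k_0,\ldots,k_{q-1}$ the $x$-coordinates of its north steps. Define $P_1$ to be the path whose $i$-th east step is at $y$-coordinate $\ell_1(i) := \max(\ell_i - 1, 0)$ for $i \geq 1$ with $\ell_1(0) := 0$, and symmetrically $P_2$ by $k_2(j) := \max(k_j - 1, 0)$ for $j \geq 1$ with $k_2(0) := 0$. Monotonicity of these shifted sequences, inherited from $\ell$ and $k$, makes $P_1$ and $P_2$ valid lattice paths with $P_1$ beginning $E$ (ending $N$) and $P_2$ beginning $N$ (ending $E$). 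A short case check on whether $i = 0$ or $\ell_i = 0$ shows the east-step weights of $P_1$ under $\bsy{U}$ exactly match those of $P$ under $\bsy{U'}$; symmetrically for the north steps of $P_2$. Hence $P_1$ bounds $\bsy{a}$ and $P_2$ bounds $\bsy{b}$. Since $P_1$ lies south-east of $P_2$, monotonicity of $\bsy{U}$ then gives each path bounds both $\bsy{a}$ and $\bsy{b}$. For non-crossing, I would verify at each interior column $k$ the strict gap $\ell_1(k) < \ell_2(k-1)$ by splitting on whether $k_0 \leq k$: in the affirmative case the inequality reduces to $\max(\ell_k - 1, 0) < \ell_k$, strict because $k_0 \leq k$ forces $\ell_k \geq 1$; in the negative case one has $\ell_1(k) = 0$ and $\ell_2(k-1) = 1$ (the $+1$ coming from the $\bsy{U'}$ boundary rule at $k_2(0) = 0$), so again $0 < 1$.

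Because both constructions operate only on the bounding paths and leave $\bsy{a}, \bsy{b}$ untouched, they automatically restrict to the weakly-increasing case, yielding the second equality $\IPPF^{(2)}_{p,q}(\bsy{U}) = \IPF^{(2)}_{p,q}(\bsy{U'})$ as a corollary. The principal obstacle is the bookkeeping imposed by the three-tier definition of $\bsy{U'}$ at the boundary nodes $k = 0$ and $\ell = 0$: these rules must be threaded consistently through every weight-matching computation, and in the reverse direction they are precisely what supplies the strict gap required to make $P_1$ and $P_2$ non-crossing.
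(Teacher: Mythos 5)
Your proof is correct and follows essentially the same route as the paper's: both directions rest on the unit shift (up/right for one inclusion, down/left with a floor at zero for the other) that matches edge weights under $\bsy{U'}$ to edge weights under $\bsy{U}$, with the boundary rules of $\bsy{U'}$ absorbing the first row and column. The only cosmetic difference is that you realize the forward direction with the single concrete path $N\cdot(P_1\setminus\text{last step})$ and check its north-step bounds via the non-crossing inequality $c^{(2)}_j < c^{(1)}_{j-1}$, whereas the paper takes an arbitrary path sandwiched between the two shifted paths $Q_1$ and $Q_2$.
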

\begin{proof}
Let $(\bsy{a}, \bsy{b}) \in \mathbb{N}^p \times \mathbb{N}^q$ be a prime two-dimensional $\bsy{U}$-parking function, with lattice paths $P_1$ and $P_2$ with $P_2$ to the left of $P_1$, satisfying the conditions of \cref{def:general}. One notices that $P_1$ must start with  $E$ and end with  $N$, and vice versa for $P_2$. Let $P_1'$ be the part of $P_1$ from $(1,0)$ to $(p, q-1)$ and let $P_2'$ be the part of $P_2$ from $(0,1)$ to $(p-1, q)$. See \cref{fig:general}(a) for an example. 

Let $Q_1$ be the path obtained from $P_1'$ by moving one unit up, and $Q_2$ the path obtained from $P_2'$ by moving one unit to the right. 
Then $Q_1$ and $Q_2$ are lattice paths from $(1,1)$ to $(p,q)$ with $Q_2$ weakly to the left of $Q_1$. See \cref{fig:general}(b) for an example. 

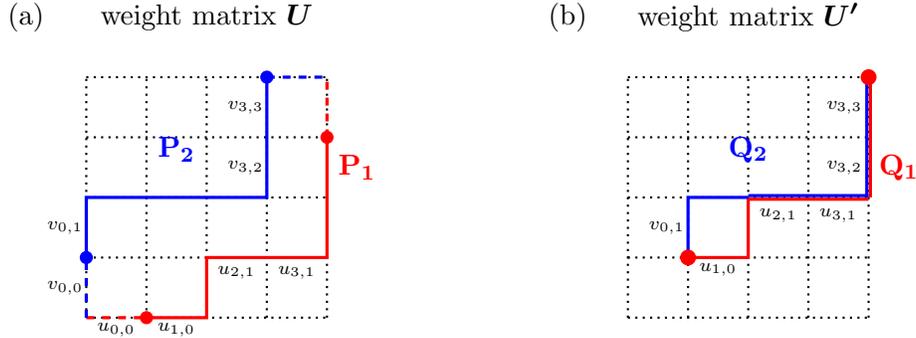
\begin{figure}[h] 
\begin{center}
\begin{tikzpicture}[scale=0.8]
\node at (-1,5) {(a)};
\draw[step=1cm, thick, dotted] (0,0) grid (4,4);

\node at (4.5, 2.5) {\tcr{$\mathbf{P_1}$}}; 
\node at (1.5,2.8) {\tcb{$\mathbf{P_2}$}}; 
\node at (2,5) {weight matrix $\bsy{U}$}; 

\draw[red,very thick, dashed] (0,0)--(1,0);
\draw[red,very thick, dashed] (4,3)--(4,4);
\draw[red, very thick] (1,0)--(2,0)--(2,1)--(4,1)--(4,3); 
\draw[blue,very thick, dashed] (0,0)--(0,1);
\draw[blue,very thick, dashed] (3,4)--(4,4);
\draw[blue, very thick] (0,1)--(0,2)--(0,2)--(3,2)--(3,4); 

\filldraw[color=red] (1,0) circle [radius=0.1];
\filldraw[color=red] (4,3) circle [radius=0.1];
\filldraw[color=blue] (0,1) circle [radius=0.1];
\filldraw[color=blue] (3,4) circle [radius=0.1];

\node at (0.5,-0.2)  {\tiny{$u_{0,0}$}}; 
\node at (1.5, -0.2) {\tiny{$u_{1,0}$}}; 
\node at (2.5, .75) {\tiny{$u_{2,1}$}}; 
\node at (3.5, .75) {\tiny{$u_{3,1}$}}; 

\node at (-0.35,0.5) {\tiny{$v_{0,0}$}};
\node at (-0.35,1.5) {\tiny{$v_{0,1}$}};
\node at (2.65,2.5) {\tiny{$v_{3,2}$}};
\node at (2.65,3.5) {\tiny{$v_{3,3}$}};

\node at (8,5) {(b)};
\begin{scope}[xshift=9cm,yshift=0cm]
\draw[step=1cm, thick, dotted] (0,0) grid (4,4);

\node at (4.5, 2.5) {\tcr{$\mathbf{Q_1}$}}; 
\node at (2,2.8) {\tcb{$\mathbf{Q_2}$}}; 
\node at (2,5) {weight matrix $\bsy{U'}$}; 

\draw[red, very thick] (1,1)--(2,1)--(2,2); 
\draw[red, very thick, yshift=-0.03cm] (2,2)--(4,2);
\draw[red,very thick, xshift=0.03cm] (4,2)--(4,4); 
\draw[blue, very thick] (1,1)--(1,2)--(2,2);
\draw[blue,very thick, yshift=0.03cm] (2,2)--(4,2); 
\draw[blue,very thick, xshift=-0.03cm] (4,2)--(4,4); 

\node at (1.5, .8) {\tiny{$u_{1,0}$}}; 
\node at (2.5, 1.7) {\tiny{$u_{2,1}$}}; 
\node at (3.5, 1.7) {\tiny{$u_{3,1}$}}; 

\node at (0.65,1.5) {\tiny{$v_{0,1}$}};
\node at (3.6,2.5) {\tiny{$v_{3,2}$}};
\node at (3.6,3.5) {\tiny{$v_{3,3}$}};

\filldraw[red,very thick] (1,1) circle (3pt);
\filldraw[red,very thick] (4,4) circle (3pt);

\end{scope}
\end{tikzpicture}
\end{center} 
\caption{(a) We show lattice paths $(P_1,P_2)$ with weights given by $\bsy{U}$ with $(P_1', P_2')$ the portions of $(P_1,P_2)$ marked by solid lines. (b) $(P_1',P_2')$ correspond to lattice paths $(Q_1,Q_2)$, and the weights on $(P'_1,P'_2)$ according to $\bsy{U}$ induce the weights on $(Q_1,Q_2)$ according to $\bsy{U'}$ on the 
sub-grid $\{(k,\ell): (1,1) \leq (k,\ell) \leq (p,q) \}$. }
\label{fig:general}
\end{figure}

By definition, the order statistics of $(\bsy{a}, \bsy{b})$ are bounded by both $P_1$ and $P_2$ with respect to $\bsy{U}$. Hence 
$a_{(0)} < u_{0,0}$ and $b_{(0)} < v_{0,0}$.
In addition, the order statistics of  $\bsy{a}\setminus a_{(0)}$ is bounded piecewise by the weight of horizontal steps of $P_1'$,
or equivalently, the weight of horizontal steps of $Q_1$ with respect to the weight $\bsy{U'}$. Similar statements hold for $\bsy{b}\setminus b_{(0)}$ 
and the vertical steps of $Q_2$. Let $P=\{EN\}\oplus Q$ where $Q$ is any lattice path lying between 
$Q_1$ and $Q_2$. Then $(\bsy{a}, \bsy{b})$ is a 
two-dimensional $\bsy{U'}$-parking function
whose order statistics are bounded by $P$ with respect to $\bsy{U'}$. 

Conversely, let $(\bsy{a}, \bsy{b})$ be  a 
two-dimensional $\bsy{U'}$-parking function
 that is bounded by a lattice path $Q$ with respect to $\bsy{U'}$. Without loss of generality, assume $Q$ starts with $E$ and the initial segment of $Q$ is $E^kN$ for some $k \geq 1$.  Let $Q'$ be obtained from $Q$ by replacing  the initial $E^kN$ with $ENE^{k-1}$. Then $Q'$ passes through $(1,1)$, and its edge weights bound 
 the order statistics of $(\bsy{a}, \bsy{b})$ as well. Now moving back to the grid with the weight matrix $\bsy{U}$. Let $R$ be the part of $Q'$ from $(1,1)$ to $(p,q)$, $R_1$ obtained from $R$ by moving one unit down, and 
 $R_2$ obtained from $R$ by moving one unit to the left.  Finally let $P_1=\{E\}\oplus R_1 \oplus \{N\}$ and $P_2=\{N\}\oplus R_2\oplus \{E\}$. Then $P_1$ and $P_2$ are non-touching lattice paths whose weights  bound
 $(\bsy{a}, \bsy{b})$ 
 with respect to the weight $\bsy{U}$, as required by \cref{def:general}. 
 
\end{proof}

Finally, we present explicit formulas for  the numbers of prime two-dimensional $\bsy{U}$-parking functions 
and their increasing analogs for the  affine case, that is, $\bsy{U}$ is given by the following equation:    
\begin{equation} \label{Weight:affine}
\left(\begin{array}{c} 
u_{k,\ell} \\ 
v_{k,\ell} 
\end{array} \right) =\left( \begin{array}{cc} 
  a & b  \\
  c & d 
  \end{array} 
  \right)
  \left(\begin{array}{c} 
k \\ 
\ell
\end{array} \right) + 
\left(\begin{array}{c} 
s \\ 
t
\end{array} \right),
\end{equation} 
with $a, b, c, d, s, t \in \mathbb{N}$.

\begin{theorem} \label{thm:general}
For the general  affine weight $\bsy{U}$ given by \cref{Weight:affine}, let $X=ap+b(q-1)$ and $Y=c(p-1)+dq$. Then 
\begin{multline*} 
    \#\IPPF^{(2)}_{p,q}(\bsy{U})=  \frac{1}{p!q!}\big[\big( (s+bq-b)(t+cp-c)-bcpq\big) (s+X+1)^{(p-1)} (t+Y+1)^{(q-1)} \\
     +   b\big(c(p+q-1)+t-tq  \big) (X+1)^{(p-1)} (t+Y+1)^{(q-1)} \\ 
     +  c \big(b(p+q-1)+s-sp  \big)(s+X+1)^{(p-1)} (Y+1)^{(q-1)} \\
      - bc(p+q-1) (X+1)^{(p-1)} (Y+1)^{(q-1)} \big] 
    \end{multline*}   
and 
\begin{multline*} 
    \#\PPF^{(2)}_{p,q}(\bsy{U})=  \big( (s+bq-b)(t+cp-c)-bcpq\big) (s+X)^{p-1} (t+Y)^{q-1} 
     +   b\big(c(p+q-1)+t-tq  \big) X^{p-1} (t+Y)^{q-1} \\ 
     +  c \big(b(p+q-1)+s-sp  \big)(s+X)^{p-1} Y^{q-1}
      - bc(p+q-1) X^{p-1} Y^{q-1}.
    \end{multline*}
\end{theorem}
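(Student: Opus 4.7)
By \cref{Theorem:affine-2-dim}, the problem reduces to computing $\#\PF^{(2)}_{p,q}(\bsy{U'})$ and $\#\IPF^{(2)}_{p,q}(\bsy{U'})$. For the affine $\bsy{U}$ of~\eqref{Weight:affine}, unwinding the definition of $\bsy{U'}$ gives
\[
u'_{k,\ell}=\begin{cases}ak+b\ell+(s-b)&k,\ell\ge 1,\\ ak+s&\ell=0,\\ s&k=0,\end{cases}\qquad v'_{k,\ell}=\begin{cases}ck+d\ell+(t-c)&k,\ell\ge 1,\\ d\ell+t&k=0,\\ t&\ell=0.\end{cases}
\]
Thus $\bsy{U'}$ agrees on $\{k,\ell\ge 1\}$ with the affine matrix of parameters $(a,b,c,d,s-b,t-c)$, but exceeds it by $+b$ on the row $\ell=0$ (for the $u$-weights) and by $+c$ on the column $k=0$ (for the $v$-weights).

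The strategy is to prove the $2\times 2$ inclusion--exclusion identities
\[
\#\IPF^{(2)}_{p,q}(\bsy{U'})=\sum_{\epsilon_1,\epsilon_2\in\{0,1\}}(-1)^{\epsilon_1+\epsilon_2}\#\IPF^{(2)}_{p,q}\bigl(\bsy{U}^{(\sigma_{\epsilon_1},\tau_{\epsilon_2})}\bigr),
\]
and the analogous identity for $\#\PF^{(2)}_{p,q}(\bsy{U'})$, where $(\sigma_0,\sigma_1)=(s-b,-b)$, $(\tau_0,\tau_1)=(t-c,-c)$, and $\bsy{U}^{(\sigma,\tau)}$ denotes the affine weight matrix of parameters $(a,b,c,d,\sigma,\tau)$. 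Granting these, applying~\eqref{eq:in-goncarov} and~\eqref{eq:goncarov} to each of the four affine counts yields exactly the four summands in the theorem: the factor bases come from $\sigma_{\epsilon_1}+ap+bq+1=X+1+(1-\epsilon_1)s$ and $\tau_{\epsilon_2}+cp+dq+1=Y+1+(1-\epsilon_2)t$, and the constant $\sigma\tau+\tau bq+\sigma cp$ appearing in each affine formula simplifies, for each of the four choices of $(\sigma_{\epsilon_1},\tau_{\epsilon_2})$, to precisely the coefficient displayed in the statement.

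To establish the inclusion--exclusion itself, I would adapt the partitioning argument from the proof of \cref{lem:sum}: classify a $\bsy{U'}$-parking function $(\bsy{a},\bsy{b})$ by $(i,j)$, where $i=\#\{k:a_k<s\}$ and $j=\#\{k:b_k<t\}$ (both necessarily at least $1$ since the first east- and north-step of any bounding path carry weights $s$ and $t$); choose positions and values of these ``small'' entries; and treat the residual large-entry problem on the interior sub-grid as a parking function with respect to the affine restriction of $\bsy{U'}$, counted by~\eqref{eq:goncarov}. Collecting contributions via the standard binomial identity $\sum_i\binom{p}{i}x^i y^{p-i}=(x+y)^p$ produces a double sum in $(i,j)$ that should collapse into exactly the four-term expression on the right-hand side of the inclusion--exclusion.

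The main obstacle is handling the axis corrections carefully: pulling out the small entries of $(\bsy{a},\bsy{b})$ does not immediately reduce to a fully affine sub-problem, because the first row and column of the sub-grid still inherit the $+b$ and $+c$ corrections of $\bsy{U'}$. One may resolve this either by inductively peeling off the axis contributions (which naturally produces the $2\times 2$ inclusion--exclusion structure) or by bypassing the combinatorics and verifying the identity directly as a polynomial identity in $(a,b,c,d,s,t,p,q)$ via four expansions of~\eqref{eq:in-goncarov}; this latter route, though lengthy, is entirely mechanical once the parametrization $(\sigma_{\epsilon_1},\tau_{\epsilon_2})$ is in hand, and formally allows the negative parameters $\sigma_1=-b$ or $\tau_1=-c$ that appear in the corner terms.
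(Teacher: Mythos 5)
Your opening reduction (to $\bsy{U'}$ via \cref{Theorem:affine-2-dim}) and your explicit computation of $\bsy{U'}$ are correct, and your observation that the four summands of the theorem are exactly the four formal evaluations of \cref{eq:goncarov} at the shift parameters $(\sigma,\tau)\in\{s-b,-b\}\times\{t-c,-c\}$ with alternating signs is accurate --- I checked that each coefficient $\sigma\tau+\tau bq+\sigma cp$ and each base $\sigma+ap+bq$, $\tau+cp+dq$ matches the displayed terms. However, the central claim --- that $\#\PF^{(2)}_{p,q}(\bsy{U'})$ actually equals this alternating sum --- is never established, and this is the entire content of the theorem. Your ``inclusion--exclusion'' cannot be a genuine set-theoretic sieve: for $\sigma=-b<0$ the set of parking functions with that weight matrix is empty (every entry would need to be negative), while the formula value is nonzero, so the identity is purely formal and requires an independent derivation. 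Your fallback of ``verifying the identity directly as a polynomial identity'' is circular, since the left-hand side $\#\PF^{(2)}_{p,q}(\bsy{U'})$ is not a known polynomial until one computes it --- which is precisely the missing step.

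Your first proposed route (partition by $i=\#\{k:a_k<s\}$ and $j=\#\{k:b_k<t\}$ and reduce the large entries to an affine sub-problem) is exactly the paper's argument, but you stop short of carrying it out, and the ``main obstacle'' you flag is not actually there: since $i,j\geq 1$, the residual sub-grid from $(i,j)$ to $(p,q)$ lies entirely in the region $k,\ell\geq 1$ where $\bsy{U'}$ is genuinely affine, so after translating by $(s,t)$ the pair $(\bsy{a}_2-s,\bsy{b}_2-t)$ is a parking function for the fully affine matrix with shift $(ai+b(j-1),\,c(i-1)+dj)$ and no axis corrections survive. (One does need the lemma that a bounding path can be rerouted through $(i,j)$, which you do not address.) What remains --- and what your write-up omits entirely --- is the double sum $\sum_{i=1}^{p}\sum_{j=1}^{q}\binom{p}{i}\binom{q}{j}s^{i}t^{j}\#\PF^{(2)}_{p-i,q-j}(\bsy{U^{(i,j)}})$ and its simplification via binomial identities into the four-term expression; this computation is where the formula is actually produced, and asserting that it ``should collapse'' is not a proof. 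Finally, the increasing case needs its own treatment (the small entries are counted by $\binom{s+i-1}{i}$ and $\binom{t+j-1}{j}$ rather than $s^{i}$ and $t^{j}$, and one sums against \cref{eq:in-goncarov}), which your proposal does not address.
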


Note that when $a=d=0$ and $b=c=s=t=1$, we have $X=q-1$ and $Y=p-1$, and the formulas in \cref{thm:general} agree with those in \cref{lemma:IPPF-2dim} and \cref{thm:PPF-2dim}. 

\begin{proof}
By \cref{Theorem:affine-2-dim}, it suffices to compute the number of two-dimensional  $\bsy{U'}$-parking functions. Assume $(\bsy{a}, \bsy{b}) \in \mathbb{N}^p \times \mathbb{N}^q$ is  a $\bsy{U'}$-parking function.  
  Let $\bsy{a}_1$ and $\bsy{a}_2$ be the subsequences of $\bsy{a}$ consisting of entries less than $u_{0,0}=s$ and greater than or equal to $s$, respectively.  Similarly, let $\bsy{b}_1$ and $\bsy{b}_2$ be the subsequences of $\bsy{b}$  consisting of entries respectively less than $v_{0,0}=t$ and greater than or equal to $t$. Let $i, j$ be the length of $\bsy{a}_1$ and $\bsy{b}_1$, respectively.

We claim that if $P$ is a lattice path whose weight sequence bounds $(\bsy{a}, \bsy{b})$,  then there is a lattice path $P'$ passing through $(i,j)$ whose weight sequence also bounds $(\bsy{a}, \bsy{b})$ (both with respect to $\bsy{U'}$). Indeed, $P'$ can be constructed as follows. Let $i'$ and $j'$ be maximal such that the points $(i',j)$ and $(i,j')$ are on $P$. Then either $i'<i$ and $j<j'$, or $i'>i$ and $j>j'$; without loss of generality, assume the former. Let $R$ be the portion of $P$ after the point $(i,j')$. Then we let $P'=\{E^{i}N^{j'}\}\oplus R$. We have that the weight sequence on the vertical steps of $P_1$ is at least that of $P$, and the weights on the first $i$ horizontal steps of $P'$ are all $s$, which bound the first $k$ order statistics of $\bsy{a}$. Thus $P'$ bounds $(\bsy{a},\bsy{b})$.

Define the weight matrix $\bsy{U^{(i,j)}}=\{ (z^{(i,j)}_{k,\ell}: 0 \leq k \leq p-i, 0 \leq \ell \leq q-j \}$, where 
    \begin{equation*}
z^{(i,j)}_{k,\ell} =\left( \begin{array}{cc} 
  a & b  \\
  c & d 
  \end{array} 
  \right)
  \left(\begin{array}{c} 
k \\ 
\ell
\end{array} \right) + 
\left(\begin{array}{c} 
ai+b(j-1) \\ 
c(i-1)+dj 
\end{array} \right). 
\end{equation*} 
Let $P$ be a lattice path that passes through the point $(i,j)$, and consider its horizontal and vertical weight sequences $(h_0,\ldots,h_{p-1})$ and $(v_0,\ldots,v_{q-1})$, respectively. Let $R$ be the portion of $P$ after the point $(i,j)$, and let $R'$ be the path $R$ shifted to start at the origin $(0,0)$. Then the horizontal and vertical weight sequences of $R'$ with respect to $\bsy{U^{(i,j)}}$ are precisely $(h_{i},\ldots,h_{p-1})-s$ and $(v_{j},\ldots,v_{q-1})-t$, respectively. Since $R$ bounds the portion of $(\bsy{a},\bsy{b})$ corresponding to $(\bsy{a}_2,\bsy{b}_2)$ with respect to $\bsy{U'}$, we have that $R'$ bounds $(\bsy{a}_2-s,\bsy{b}_2-t)$ with respect to $\bsy{U^{(i,j)}}$.

Conversely, suppose $(\bsy{a}'_2,\bsy{b}'_2)\in \mathbb{N}^{p-i}\times\mathbb{N}^{q-j}$ is a pair of sequences bounded by some path $R$ with respect to $\bsy{U^{(i,j)}}$. For some $(\bsy{a}_1,\bsy{b}_2)\in\mathbb{N}_s^i\times\mathbb{N}_t^{j}$, define $\bsy{a}$ to be a shuffle of $\bsy{a}_1$ and $\bsy{a}_2'+s$, and $\bsy{b}$ to be a shuffle of $\bsy{b}_1$ and $\bsy{b}_2'+t$. Then we have that $(\bsy{a},\bsy{b})$ is bounded by the path $\{E^iN^{j}\}\oplus R$ with respect to $\bsy{U}'$.

Thus we may conclude that the pair $(\bsy{a}_2-s, \bsy{b}_2-t)$  is exactly a two-dimensional vector parking function for the weight matrix $\bsy{U^{(i,j)}}$.

To count the number of possible sequences $\bsy{a}$ corresponding to a fixed $\bsy{a}_2$, we observe that $\bsy{a}_1\in\mathbb{N}_s^i$, and that $\bsy{a}$ is a shuffle of $\bsy{a}_1$ and $\bsy{a}_2$. Thus there are $\binom{p}{i}s^i$ sequences $\bsy{a}$ for each $\bsy{a}_2$. Similarly, there are $\binom{q}{ j}t^{j}$ possible sequences $\bsy{b}$ for each $\bsy{b}_2$. Combining the above with  \cref{eq:goncarov}, 
\begin{eqnarray}
    \# \PPF^{(2)}_{p,q}(\bsy{U}) & = &\sum_{i=1}^p \sum_{j=1}^q \binom{p}{i} \binom{q}{j} s^i t^{j} \#\PF^{(2)}_{p-i, q-j}(\bsy{U^{(i,j)}}) 
 \nonumber \\
    & = & \sum_{i=1}^p \sum_{j=1}^q \binom{p}{i} \binom{q}{j} s^i t^{j}  A  X^{p-i-1} Y^{q-j-1} 
\end{eqnarray}
where 
\begin{eqnarray*}
X=ap+b(q-1),   \qquad  
    Y=c(p-1)+dq, 
    \end{eqnarray*} 
    and 
    \begin{eqnarray*}
    A&=&ij(ad-bc) + ic\big( a(p-1)+bq\big) +bj \big(cp+d(q-1)\big) +bc(1-p-q).       
\end{eqnarray*}

If we restrict to counting pairs of nondecreasing sequences $(\bsy{a},\bsy{b})$ corresponding to a nondecreasing pair $(\bsy{a}_2,\bsy{b}_2)$, we have that there are $\binom{s+i-1}{ i}=s^{(i)}/i!$ choices for $\bsy{a}_1\in\mathbb{N}_s^i$ and $\binom{t+j-1}{ j}=t^{(j)}/j!$ choices for $\bsy{b}_1\in\mathbb{N}_t^{j}$ (where $x^{(n)}=x(x+1) \cdots (x+n-1)$ is the rising factorial). Combining the above with \cref{eq:in-goncarov},  

\begin{eqnarray} \label{eq:IPPF-2-dim}
    & &  \#IPPF^{(2)}_{p,q}(\bsy{U})  =  \sum_{i=1}^p \sum_{j=1}^q \frac{A}{i!j!} s^{(i)} t^{(j)} 
     \# IPF^{(2)}_{p-i,q-j}(\bsy{U^{(i,j)}}) \nonumber \\
     & = &  \sum_{i=1}^p \sum_{j=1}^q \frac{A}{(p-i)(q-j) i!j!} s^{(i)} t^{(j)} 
     \binom{ap+b(q-1)+p-i-1}{p-i-1} \binom{c(p-1)+dq+q-j-1}{q-j-1}    \nonumber \\    
     & = & \frac{1}{XY} \sum_{i=1}^p \sum_{j=1}^q  A \binom{s+i-1}{i} \binom{t+j-1}{j}  \binom{X+p-i-1}{p-i} \binom{Y+q-j-1}{q-j}  \\
     & = & \frac{1}{p!q! XY} \sum_{i=1}^p \sum_{j=1}^q \binom{p}{i} \binom{q}{j} A s^{(i)} t^{(j)}  X^{(p-i)} Y^{(q-j)}. 
\end{eqnarray}

Using binomial identities for $x^n$ and the rising factorial $x^{(n)}$ and a similar computation as in the proof of \cref{thm:PPF-2dim}, we obtain the desired formulas. 
\end{proof}

\section{Final remarks and open problems}  \label{section:final}

Classical parking functions are well-known for their connections to various discrete and algebraic structures. Initially introduced as hashing functions to analyze the distribution of linear probes, parking functions have since been linked to numerous areas, including the enumeration of labeled trees and forests, hyperplane arrangements, noncrossing partition lattices, monomial ideals, and the combinatorial theory of Macdonald polynomials, among others \cite{Yan2015}. However, the properties of prime parking functions remain relatively unexplored. A deeper understanding of the roles that prime parking functions and their sum-enumerators play in combinatorics and algebra would be highly valuable.

There are numerous generalizations of classical parking functions in the literature. In this paper, we explore the concept of primeness in the context of 
vector parking functions, $(p,q)$-parking functions, and two-dimensional vector parking functions. 
For the first two cases, the corresponding parking functions can be represented by labeled lattice paths, where the notion of an ``indecomposable component" is well-defined in terms of the lattice paths, guiding an appropriate definition of primeness. We extend this notion to two-dimensional vector parking functions. However, the interpretation of ``indecomposable" in this latter case remains unclear.

For dimension $d\geq 3$, there are notions of $(p_1,p_2,\dots, p_d)$-parking functions  (as defined by Cori and {Poulalhon}) and $d$-dimensional vector parking functions.  
However, in both cases, we lack an interpretation involving disjoint lattice paths. Extending the concept of primeness to higher-dimensional parking functions remains an open question.

Beyond the generalized parking functions discussed in this paper, there are other models, such as $G$-parking functions, which are linked to the recurrent configurations of sandpile models on a digraph $G$ \cite{Psh2004}. Other examples include parking functions and mappings defined through a parking process on trees or digraphs\cite{BGY2017,LP2016}, parking functions involving cars of different sizes \cite{AJC79,EH2016}, or parking processes allowing backward movement \cite{CHJ+2020}. While there is extensive literature on these topics, to our knowledge, primeness has only been explored for parking functions on trees (see \cite{KY2019}). Much remains to be investigated in this area.

\section*{Acknowledgements} 

This material is based upon work supported by the National Science Foundation under Grant Number DMS 1916439 while the authors were  participating in the 
Mathematics Research Communities (MRC) 2024 Summer Conference at Beaver Hollow Conference Center in Java Center, NY. 
 We thank AMS for facilitating
and supporting the MRC. 

We thank Collier Gaiser for helpful discussions during the MRC week. Mandelshtam is supported by NSERC grant RGPIN-2021-02568. Martinez is supported by the NSF Graduate Research Fellowship Program under Grant No. 2233066. Yan is supported in part by the Simons Collaboration
Grant for Mathematics 704276.

\bibliographystyle{amsplain}
\bibliography{biblioPrime}
\end{document}